\newcommand{\R}{{\mathbb R}}
\newtheorem{theorem}{Theorem}[section]
\newtheorem{lemma}{Lemma}[section]
\newenvironment{proof}{\noindent{\bf Proof:}}{\hfill$\Box$}
\title{A bifurcation problem for a one-dimensional $p-$Laplace elliptic problem with non-odd absorption}
\author{A. N. Carvalho\thanks{%
		With the support of  by NSFC Grant 11671367} \\
	\textit{andcarva@icmc.usp.br}
	\and
	T. L. M. Luna\thanks{%
		With the support of  by FAPESP Grant 2019/20341-3.}           \\
	\textit{titoluna@icmc.usp.br}
	}
\begin{document}

\maketitle
{\footnotesize

\centerline{Instituto de Ci\^{e}ncias Ma\-te\-m\'{a}\-ti\-cas e de Computa\c{c}\~{a}o, }

\centerline{Universidade de S\~{a}o Paulo-Campus de S\~{a}o Carlos}

\centerline{Caixa Postal 668, S\~{a}o Carlos SP, Brazil}
}

\bigskip

\centerline{(Communicated by the associate editor name)}

\begin{abstract}
	
	In this paper we study the existence of solutions of a  one-dimensional eigenvalue problem 	 
	$-\left(|\phi_x|^{p-2}\phi_x\right)_x=\lambda \left(|\phi|^{q-2}\phi-f(\phi)\right)$ such that $\phi(0)=\phi(1)=0$, where $p,q>1$, $\lambda$ is a positive real parameter and $f$ is a continuous (not necessarily odd) function.  Our goal is to give a complete description of solutions of this problem. We completely characterize the set of solutions of this problem, which may be uncountable. For $1<p\neq 2$, the existing results treat only the case when $f$ is either odd and a power (see \cite{TAYA}) or when $p=q$ (\cite{Guedda-Veron}). Our method of proof rely on a careful analysis of the phase diagram associated with this equation, refining the regularity results of \cite{otani} and characterizing the exact points where we may have $C^2$ regularity of solutions including some points $\chi\in (0,1)$ for which $\phi_x(\chi)=0$.
	
 \end{abstract}
 Keywords: $p$-Laplace operator; elliptic problem; nonlinear eigenvalue
	
\section{Introduction}

Determining the structure of the set of stationary solutions is a very important step for the understanding of the asymptotic behavior of the solutions of the associated to the autonomous parabolic problem
\begin{equation}\label{parabolic}
\begin{cases}
u_t-\left(|u_x|^{p-2}u_x\right)_x=\lambda\left( |u|^{q-2}u-f(u)\right), &\text{in $(0,1)$},\\
u(0,t)=u(1,t)=0
\\ 
u(x,0)=u_0(x), \ u_0\in W^{1,p}(0,1). 
\end{cases}	
\end{equation}
In fact, the semigroup associated to \eqref{parabolic} is gradient. This implies that any solution of \eqref{parabolic} must converge to a connected component of the set of stationary solutions and that any global solution, that is, defined in the whole real line, must converge forwards and backwards to connected components of the set of stationary solutions.

\bigskip
	
In this paper we consider the following nonlinear eigenvalue problem
\begin{equation}\label{equilibria}
\begin{cases}
-\left(|\phi_x|^{p-2}\phi_x\right)_x=\lambda\left( |\phi|^{q-2}\phi-f(\phi)\right), &\text{in $(0,1)$},\\ 
\phi(0)=\phi(1)=0 &
\end{cases}	
\end{equation}
where, $p,q>1$, $\lambda>0$ is a real parameter and $f\in C(\R)$  is a (not necessarily odd) function such that
the function $|s|^{q-2}s-f(s)$ is zero at zero, has a first positive zero $z^+$ and a first negative zero $z^-$ and such that the function  $g(s)=\frac{f(s)}{|s|^{q-2}s}$,  $s\in(z^-,z^+)$,  satisfies
\begin{equation}\label{hypothesis-monotononicty}
	\text{$g$ is strictly decreasing in $(z^-,0)$ and strictly increasing in $(0,z^+)$}
\end{equation}
and
\begin{equation}\label{hypothesis-zero}
		\displaystyle\lim_{s\to 0} g(s)=0, \qquad \lim_{s\to (z^\pm)^\mp}\left(\frac{|s|^{q-2}s-f(s)-|z^\pm|^{q-2}z^\pm+f(z^\pm)}{|s|^{q-2}s-|z^\pm|^{q-2}z^\pm}\right)=L<0.
\end{equation}

\bigskip

Our aim is to study the structure of set $E_\lambda$ of the solutions of \eqref{equilibria}, this structure depends on the parameter  $\lambda$ and changes as $\lambda$ grows through of certain sequences of positive real numbers 
$\lambda_n$ and $\tilde{\lambda}_n$, that is, as $\lambda$ increases through $\lambda_n (\tilde{\lambda}_n)$, a bifurcation occur giving rise to a pair (a pair of connected components with a continuum) of solutions of \eqref{equilibria} that vanishes exactly $n+1$ times in $[0,1]$.

\bigskip

Let us mention in some more detail the results that have motivated our investigation. The case $f(s)=|s|^{q+r-2}s$ with $r>0$ (odd and a power function), $q\geqslant2$ and $p>2$, has been discussed in the work of Takeuchi and Yamada in \cite{TAYA}. They have obtained the sequences 
\begin{equation*}
	\begin{split}
		\lambda_n=
		\begin{cases}
			0,&\text{if $q<p$}\\
			\displaystyle n^p(p-1)\left[2\int^1_0(1-t^p)^{-1/p}dt\right]^p,&\text{if $q=p$}\\
			\displaystyle\frac{p-1}{p}n^p\left[2\int^{a^*}_0\left(\frac{t^{q+r}-(a^*)^{q+r}}{q+r}+\frac{(a^*)^q-t^{q}}{q}\right)^{-\frac{1}{p}}dt\right]^p,&\text{if $q> p$}
		\end{cases},& \quad n=1,2,\cdots
	\end{split}
\end{equation*}
and
\begin{equation*}
	\begin{split}
		\tilde{\lambda}_n=\frac{p-1}{p}n^p\left[2\int^1_0\left(\frac{t^{q+r}-1}{q+r}+\frac{1-t^{q}}{q}\right)^{-\frac{1}{p}}dt\right]^p,&\quad\text{$n=1,2,\cdots$}
	\end{split}
\end{equation*}
where $a^*\in (0,1)$ is the unique point where the function 
$$(0,1)\ni a\mapsto \int^a_0\left(\frac{t^{q+r}-a^{q+r}}{q+r}+\frac{a^q-t^{q}}{q}\right)^{-\frac{1}{p}}dt
$$ 
attains its minimum.

\begin{theorem}\label{theorem-ty} Let $q<p$. The structure of $E_\lambda$  of solutions of \eqref{equilibria} is given by
	$$E_\lambda=\{0\}\cup E^\pm_1\cup E^\pm_2\cup\cdots$$
	where 
	\begin{equation}\label{new-form2}
		E_j^\pm=
		\begin{cases}
			\{\phi^\pm_j\},&\text{if $0<\lambda\leqslant\tilde{\lambda}_n$}\\
			[0,1]^{j-1},&\text{if $\tilde{\lambda}_n<\lambda$.}
		\end{cases}
	\end{equation}
	for $j=1,2,\cdots$
\end{theorem}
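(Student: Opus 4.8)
The plan is to convert the boundary value problem \eqref{equilibria} into a planar phase analysis and then to classify each solution by a single energy level together with a finite combinatorial datum recording its bumps and flat pieces. Multiplying the equation by $\phi_x$ and integrating produces the first integral
\[
\frac{p-1}{p}\,|\phi_x|^p+\lambda H(\phi)=C,\qquad H(s):=\frac{|s|^q}{q}-\int_0^s f(\tau)\,d\tau,
\]
which is the mechanical energy for a particle in the potential $\lambda H$. Under \eqref{hypothesis-monotononicty}--\eqref{hypothesis-zero} the function $H$ has a strict minimum at $0$ with $H(0)=0$ and strict maxima at $z^\pm$, where $H'(s)=|s|^{q-2}s-f(s)$ vanishes \emph{linearly} with slope proportional to $L<0$; since here $f(s)=|s|^{q+r-2}s$ is odd, $H$ is even and $z^+=-z^-=:1$ with $H(z^+)=H(z^-)$. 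First I would record that any nonzero solution lives on a fixed level $C=\lambda c$, $c\in(0,H(1)]$, crosses $\phi=0$ transversally (there $|\phi_x|^p=\tfrac{p}{p-1}\lambda c>0$), and is therefore a concatenation of sign-alternating ``bumps'' joining consecutive zeros, each of amplitude $\alpha(c)$ determined by $H(\alpha(c))=c$; the sign of the first bump splits the solution set into the two families $E_j^+$ and $E_j^-$.

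The second step is to study the bump time
\[
T(\alpha;\lambda)=2\Big[\tfrac{p-1}{p\lambda}\Big]^{1/p}\int_0^\alpha\big(H(\alpha)-H(s)\big)^{-1/p}\,ds .
\]
Using the substitution $s=\alpha t$ and the hypothesis $q<p$, I would show $T(\alpha;\lambda)\to0$ as $\alpha\to0$; this sublinear feature is exactly what forces $\lambda_j=0$, i.e. bumps of arbitrarily short period (hence $j$-bump solutions) exist for \emph{every} $\lambda>0$. I would then prove that $\alpha\mapsto T(\alpha;\lambda)$ is continuous and strictly increasing on $(0,1)$, with limit $\tau_0(\lambda):=T(1^-;\lambda)$; because $L<0$ gives $H(1)-H(s)\sim c_0(1-s)^2$ near $s=1$, the integrand behaves like $(1-s)^{-2/p}$, so $\tau_0(\lambda)$ is finite precisely when $p>2$, and $\tau_0(\lambda)=A\lambda^{-1/p}$ is decreasing with $j\tau_0(\lambda)=1\Leftrightarrow\lambda=\tilde\lambda_j$.

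The technical heart of the argument is the separatrix level $c=H(1)$, where a bump attains the value $\pm1$ with $\phi_x=0$ in the finite ``time'' $\tau_0/2$. Since $\pm1$ are zeros of the reaction, the constants $\phi\equiv\pm1$ solve the equation, and the hard part will be to glue such a constant arc (a dead core) to an arriving bump and to verify that the result is a genuine solution. Refining \cite{otani}, I would check that $|\phi_x|^{p-2}\phi_x$ is $C^1$ across each junction point $\chi$ with $\phi(\chi)=\pm1$, $\phi_x(\chi)=0$, and determine the precise regularity there (whether $\phi\in C^2$ at $\chi$) in terms of $p$, $q$ and $L$; this is the only place where non-uniqueness of the initial value problem at $(\pm1,0)$ is used, and it is where the analysis is most delicate.

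Finally I would count. A solution vanishing $j+1$ times has $j$ bumps. If $c<H(1)$ all bumps are rigid of amplitude $\alpha(c)<1$, and $jT(\alpha(c);\lambda)=1$ forces $T=1/j$, which by strict monotonicity has a unique root $\alpha<1$ iff $1/j<\tau_0(\lambda)$, i.e. $\lambda<\tilde\lambda_j$; this is the single solution $\phi_j^\pm$. If $c=H(1)$ every bump reaches $\pm1$ and carries a free waiting time $w_i\ge0$ subject to $\sum_{i=1}^j w_i=1-j\tau_0(\lambda)=:W$, admissible exactly when $W>0$, i.e. $\lambda>\tilde\lambda_j$; the solution set is then the simplex $\{w\ge0:\ \sum_i w_i=W\}$, which is homeomorphic to $[0,1]^{j-1}$ (a point when $j=1$). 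Phase-plane rigidity---energy level, number of zeros and dead-core data determine $\phi$ uniquely, while the boundary condition fixes the total length to $1$---shows these configurations exhaust $E_\lambda$, yielding $E_\lambda=\{0\}\cup E_1^\pm\cup E_2^\pm\cup\cdots$ with the stated dichotomy at $\tilde\lambda_j$.
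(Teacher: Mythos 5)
Your proposal is correct and follows essentially the same route as the paper: the energy first integral, a time map that is strictly increasing for $q<p$ with limit $0$ at zero amplitude and separatrix value $\tau_0(\lambda)$ finite precisely when $p>2$ (the paper's Lemmas on $I$ and $J$), and flat-core gluing at the separatrix with waiting times constrained to a $(j-1)$-simplex, exactly as in the paper's constructions in Sections \ref{S-1}--\ref{S-odd}. The only difference is cosmetic: since $f$ is odd in this theorem you merge the paper's two half-wave maps $\theta$ and $\alpha$ (equivalently $I$ and $J$) into a single bump time $T$, which the paper keeps separate because its general $f$ is not odd.
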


\begin{theorem}\label{theorem-ty} Let $q>p$. The structure of $E_\lambda$  of solutions of \eqref{equilibria} is given by
	\begin{enumerate}
		\item[(i)] If $\lambda<\lambda_1,$ then $E_\lambda=\{0\}$, and
		\item[(ii)] If $\lambda_n\leqslant\lambda<\lambda_{n+1},$ for some $n=1,2,\cdots$ then
		$$E_\lambda=\{0\}\cup E^\pm_1\cup\cdots\cup E^\pm_n$$
		where 
		\begin{equation}\label{new-form3}
			E_j^\pm=
			\begin{cases}
				\{\phi^\pm_j,\psi^\pm_j\},&\text{if $\lambda_n<\lambda\leqslant\tilde{\lambda}_n$}\\
				[0,1]^{2j-2},&\text{if $\tilde{\lambda}_n<\lambda$.}
			\end{cases}
		\end{equation}
		for $j=1,\cdots,n$, and $E_n^\pm=\{\phi^\pm_n\}$ since $\lambda_n=\lambda.$
	\end{enumerate}
\end{theorem}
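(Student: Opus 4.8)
The plan is to reduce \eqref{equilibria} to quadratures via the first integral of the autonomous equation, and then to read the entire diagram off a single \emph{half-length (period) function}. Multiplying by $\phi_x$ and integrating shows every solution obeys
$$\frac{p-1}{p}\,|\phi_x|^p+\lambda H(\phi)=\text{const},\qquad H(s)=\frac{|s|^q}{q}-F(s),\quad F(s)=\int_0^s f(\tau)\,d\tau .$$
Since $H'=h$ with $h(s)=|s|^{q-2}s-f(s)$, the hypotheses make $H$ strictly increasing on $(0,z^+)$ with a strict minimum $H(0)=0$ and strict local maxima at $z^{\pm}$, so admissible closed orbits are confined to amplitudes $\alpha\in(0,z^+)$ and $\beta\in(z^-,0)$. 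On an orbit with turning point $\alpha$ the energy equals $\lambda H(\alpha)$, whence $|\phi_x|=\big(\tfrac{p\lambda}{p-1}(H(\alpha)-H(\phi))\big)^{1/p}$ and a single positive arch has length $2T^+(\alpha)$, where
$$T^+(\alpha)=\Big(\tfrac{p-1}{p\lambda}\Big)^{1/p}\int_0^\alpha\frac{ds}{(H(\alpha)-H(s))^{1/p}} ,$$
and symmetrically for $T^-(\beta)$. Because a solution is a concatenation of arches sharing one energy level $c:=H(\alpha)=H(\beta)$, the boundary condition $\phi(0)=\phi(1)=0$ becomes a length constraint: for $j=2m$ arches it reads $2m\big(T^+(\alpha)+T^-(\beta)\big)=1$, while for $j=2m+1$ a sign-dependent weighting of $T^\pm$ appears (this is where the families $E_j^{+}$ and $E_j^{-}$ separate). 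The decisive structural fact is the scaling $T^\pm=\lambda^{-1/p}\widehat T^\pm$: raising $\lambda$ uniformly shortens every arch, which is the mechanism producing solutions with ever more zeros as $\lambda$ grows.

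First I would pin down the shape of the reduced half-periods $\widehat T^{\pm}$ and of the combined quantity $\Theta(c)=\widehat T^+(\alpha(c))+\widehat T^-(\beta(c))$ on $c\in(0,\min\{H(z^+),H(z^-)\})$. Using $q>p$ together with $g(s)\to0$ as $s\to0$ (so $h(s)\sim|s|^{q-2}s$ near $0$), the substitution $s=\alpha t$ gives $\widehat T^+(\alpha)\sim C\,\alpha^{\,1-q/p}\to+\infty$ as $\alpha\to0^+$; and \eqref{hypothesis-zero} forces $h$ to vanish \emph{simply} at $z^+$ (a nondegenerate maximum of $H$), which governs the behaviour as $\alpha\to(z^+)^-$. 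Hence $\Theta$ blows up at both ends of the admissible interval and attains an interior minimum at some $c^*$, realized by the amplitude $a^*$ that minimizes the half-period. Inserting the scaling, the length equation $\Theta(c)=\tfrac{1}{2m}\lambda^{1/p}$ has no root for $\lambda$ below a threshold $\lambda_j$ (of the announced form $\tfrac{p-1}{p}j^{\,p}[\,2\int_0^{a^*}(H(a^*)-H(t))^{-1/p}dt\,]^{p}$, reducing to the displayed sequence in the odd-power example), exactly one root at $\lambda=\lambda_j$ (the saddle-node, giving $\{\phi_j^{\pm}\}$), and exactly two roots—a small- and a large-amplitude branch $\phi_j^{\pm},\psi_j^{\pm}$—for $\lambda_j<\lambda\le\tilde\lambda_j$. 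Since adding arches only raises the threshold, the smallest is $\lambda_1$, giving part (i); and $\lambda_n\le\lambda<\lambda_{n+1}$ admits exactly the arch counts $1,\dots,n$, giving part (ii).

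Next I would treat the saturated regime $\lambda>\tilde\lambda_j$, where the cube $[0,1]^{2j-2}$ appears, with $\tilde\lambda_j$ identified by sending the amplitude to $z^+$. The key is that, under \eqref{hypothesis-zero}, the endpoint integral $\int_0^{z^+}(H(z^+)-H(s))^{-1/p}\,ds$ converges (the quadratic vanishing of $H(z^+)-H(s)$ is integrable against the $p$-Laplacian weight in the relevant range of $p$), so an arch can reach the equilibrium value $z^+$ with $\phi_x=0$ in \emph{finite} length. Once there, since $h(z^+)=0$ the constant $\phi\equiv z^+$ solves the equation, and the genuine non-uniqueness of the degenerate problem at points with $\phi_x=0$—exactly the refinement of the regularity theory of \cite{otani} announced in the introduction, locating the $\chi\in(0,1)$ where $\phi_x(\chi)=0$ and $C^2$-regularity may fail—permits the solution to rest on the plateau for an arbitrary duration before departing along the unstable branch. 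Cataloguing the admissible gluings of rigid transition pieces with free dead cores, subject to total length $1$, produces a connected continuum whose dimension the regularity analysis identifies as $2j-2$, matching \eqref{new-form3}. A final shooting/energy argument must then confirm \emph{completeness}: that every solution of \eqref{equilibria} is of this form, so that $E_\lambda$ is exhausted by $\{0\}$ and the families $E_j^{\pm}$ with $j\le n$.

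The main obstacle is twofold. The delicate analytic step is the strict \emph{unimodality of} $\Theta$ in the genuinely non-odd setting: with no symmetry relating $\widehat T^+$ and $\widehat T^-$ one cannot reduce to a single power, and must instead differentiate the quadrature and control the sign of $\Theta'(c)$ using only the monotonicity of $g$ in \eqref{hypothesis-monotononicty} and the boundary behaviour \eqref{hypothesis-zero}; this is precisely where the argument goes beyond the odd-power case of \cite{TAYA} and the case $p=q$ of \cite{Guedda-Veron}. The second obstacle is making the degenerate branching rigorous: showing that a plateau may be inserted \emph{only} at the equilibria $z^{\pm}$ (never at a regular turning point, where $h\neq0$ forces uniqueness, and never at an interior zero, where the energy is positive and the crossing is transversal), verifying that the glued profiles are bona fide weak solutions, and proving that the plateau parametrization is a homeomorphism onto $[0,1]^{2j-2}$. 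Everything else—the scaling in $\lambda$, the endpoint asymptotics, and the explicit identification of $\lambda_j$ and $\tilde\lambda_j$—is then bookkeeping resting on these two facts.
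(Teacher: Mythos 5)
Your overall strategy --- reduction to quadratures via the first integral, a half-length function with the scaling $T^{\pm}=\lambda^{-1/p}\widehat{T}^{\pm}$, an interior minimum of the time map producing the saddle--node at $\lambda_n$, and plateaus at $z^{\pm}$ beyond $\tilde{\lambda}_n$ --- is exactly the time-map machinery of Sections \ref{S-1}--\ref{S-odd} (the statement itself is quoted from \cite{TAYA}; the paper proves its generalization through Lemmas \ref{function_I} and \ref{function_J}). But two steps are genuinely defective. First, your claim that $\Theta$ ``blows up at both ends of the admissible interval'' is false in the regime of the statement: TY assume $p>2$, and then the endpoint integral $I(z^+)$ is \emph{finite} (Lemma \ref{function_I}, item $iii)$), precisely because the double zero of $H(z^+)-H(s)$ at $z^+$ gives an integrand of order $(z^+-s)^{-2/p}$ with $2/p<1$. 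You invoke this very finiteness later to insert the flat cores, so the argument is internally inconsistent; moreover, with a finite value at the right endpoint the interior minimum at $a^*$ no longer follows from endpoint blow-up and must instead be obtained from the sign of $\Phi_a$ near $z^+$, as in the proof of item $iv)$ of Lemma \ref{function_I}. This is also where your ``exactly one root at $\lambda_j$, exactly two for $\lambda_j<\lambda\leqslant\tilde{\lambda}_j$'' needs strict monotonicity of each branch of $I$, which you only flag; for the power nonlinearity of the statement this is TY's explicit computation, while the paper's general lemmas deliver only ``at least'' counts.

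Second, the dimension bookkeeping for $\lambda>\tilde{\lambda}_n$ is not derived. By $C^1$-matching at the interior zeros, all $j$ arches of a solution in $S^\pm_j$ carry the same slope $r$ at their endpoints; a plateau can occur only when $r=r(\lambda)$, and then each arch admits exactly one plateau (after leaving $z^{\pm}$ the energy-$r(\lambda)$ orbit descends monotonically to the next zero), of free length, subject to the single constraint that the plateau lengths sum to $1-2jx(\lambda)$ (here $x(\lambda)=y(\lambda)$ since $f$ is odd). Your gluing mechanism therefore produces a simplex of dimension $j-1$ --- which is what the paper's own count gives, since $\mathcal{E}^\pm(j)=j-1$ in Table \ref{tablaEk} in the case $A(z^+)=A(z^-)$ relevant to odd $f$ --- and your sentence that ``the regularity analysis identifies'' the dimension as $2j-2$ simply asserts the target value of \eqref{new-form3}: nothing in your construction supplies the extra $j-1$ parameters, and the regularity refinement concerns smoothness at points with $\phi_x=0$, not the count of gluing parameters. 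Finally, since $q>p$ makes $I(a)\to+\infty$ as $a\to 0^+$, the small-amplitude solution $\psi^\pm_j$ persists for \emph{all} $\lambda>\lambda_n$; your description of $E^\pm_j$ past $\tilde{\lambda}_n$ silently discards it, whereas the paper's Theorem \ref{Theorem-qgreatthanp} carries it as a separate member of the solution set, so your proposal must either account for it or explain its absorption into the continuum.
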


\bigskip

 In \cite{Guedda-Veron}, Guedda and Veron studied the following elliptic problem
\begin{equation}\label{equilibriagueddaveron}
	\begin{cases}
		-\left(|\phi_x|^{p-2}\phi_x\right)_x=\lambda |\phi|^{p-2}\phi-f(\phi), &\text{in $(0,1)$},\\ 
		\phi(0)=\phi(1)=0 &
	\end{cases}	
\end{equation} where $p>1$, $\lambda$ is a real number and $f$ is a $C^1$ real-valued odd function such that $s\mapsto \frac{f(s)}{|s|^{p-2}s}$ is increasing in $(0,+\infty)$ with limits $0$ at $0$ and $+\infty$ at $+\infty$. They describe the structure of the bifurcations of the solutions of \eqref{equilibriagueddaveron} as $\lambda$ increases through the eigenvalues of the one-dimensional p-Laplace operator on $W^{1,p}_0(0,1)$, that is, the sequence $\lambda_n=\displaystyle n^p(p-1)\left[2\int^1_0(1-t^p)^{-1/p}dt\right]^p$ for $n=1,2,\cdots$. More explicitly, when $1 < p \leqslant 2$ there is still a finite number of solutions of \eqref{equilibriagueddaveron} such as in the semilinear case $p = 2$ and, when $p>2$, they show that the number of corresponding solutions of the problem \eqref{equilibriagueddaveron} may vary from one to a continuum set of solutions. In fact they show the  following
\begin{theorem}\label{theorem-gv} The structure of $E_\lambda$ is given by
	\begin{enumerate}
		\item[(i)] If $\lambda\leqslant\lambda_1,$ $E_\lambda=\{0\}$, and
		\item[(ii)] If $1<p\leqslant2$ and $\lambda_n<\lambda\leqslant\lambda_{n+1},$ for some $n=1,2,\cdots$ then
		$$E_\lambda=\{0\}\cup\{\phi^\pm_1,\cdots,\phi^\pm_n\}$$
		where 
		\begin{equation*}
			\phi^\pm_j\in Z^\pm_j\left.=
			\begin{cases}
				\varphi\in S^\pm_j:&\text{$\varphi(x)=\varphi\left(\frac{1}{j}-x\right)$ for $x\in(0,2j)$ and }\\
				&\text{$\varphi(x)=-\varphi\left(x-\frac{1}{j}\right)$ for $x\in(\frac{1}{j},1)$}\\
			\end{cases}\right\}
		\end{equation*}
		with
		\begin{equation}\label{set-zeros}
			S^\pm_j\left.=
			\begin{cases}
				\varphi\in C^1([0,1]):&\text{$\varphi(0)=\varphi(1)=0,$ $\pm\varphi_x(0)>0$ and}\\
				&\text{$\varphi$ has exactly $j-1$ simple zeros in $(0,1)$}\\
			\end{cases}\right\}
		\end{equation}
		for $j=1,\cdots,n.$
		\item[(iii)] If $2<p$ and $\lambda_n<\lambda\leqslant\lambda_{n+1},$ for some $n=1,2,\cdots$ then
		$$E_\lambda=\{0\}\cup\{\phi^\pm_1\}\displaystyle\cup E^\pm_2\cup\cdots\cup E^\pm_n$$ 
		where $\phi^\pm_1\in Z^\pm_1$ and each $E^\pm_j\subset S^\pm_j$ for $j=2,\cdots,n$ satisfies the following properties 
		\begin{itemize}
			\item $E^\pm_j$ is a set of a single element  if $2jx(\lambda)\geqslant1$ and
			\item $E^\pm_j$ is diffeomorphic to $[0,1]^{j-1}$ if $0<2jx(\lambda)<1$
		\end{itemize}
		where  
		\begin{equation}
			x(\lambda)=\int^{h(\lambda)}_0\left(\frac{\lambda}{p-1}h^p(\lambda)-\frac{p}{(p-1)}F(h(\lambda))+\frac{p}{p-1}F(s)-\frac{\lambda}{p-1}s^p\right)^{-1/p}ds
		\end{equation}since  $h(\lambda)$ is such that $f(h(\lambda))-\lambda(h(\lambda))^{p-1}=0$  and again $F(s)=\int^s_0f(\xi)d\xi$.
	\end{enumerate} 
\end{theorem}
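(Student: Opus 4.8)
The plan is to convert the boundary value problem into a phase-plane/time-map analysis of the autonomous equation and then to count, and in the degenerate case glue, the ``arches'' out of which a solution is built. First I would record the first integral: multiplying \eqref{equilibriagueddaveron} by $\phi_x$ and integrating yields
\[
\frac{p-1}{p}\,|\phi_x|^p + V(\phi) = C, \qquad V(s):=\frac{\lambda}{p}|s|^p - F(s),
\]
so every solution lies on a level set of $H(\phi,\psi)=\frac{p-1}{p}|\psi|^p+V(\phi)$ in the plane $(\phi,\phi_x)$. Since $f$ is odd, $V$ is even, and for $s>0$ one has $V'(s)=s^{p-1}\bigl(\lambda-g(s)\bigr)$; as $g$ increases from $0$ to $+\infty$, $V$ has a strict minimum at $0$ and strict maxima at $\pm h(\lambda)$, with $V''(h(\lambda))<0$ (a consequence of the strict monotonicity of $g$). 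Hence the origin is a center encircled by periodic orbits, bounded by the equilibria $(\pm h(\lambda),0)$.

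Next I would introduce the time map. An arch rising from a zero with positive slope reaches an amplitude $a\le h(\lambda)$ where $\phi_x=0$ and $C=V(a)$, and along it $|\phi_x|=\bigl(\tfrac{p}{p-1}(V(a)-V(\phi))\bigr)^{1/p}$, so consecutive zeros are separated by $2\tau(a)$ with
\[
\tau(a)=\int_0^a\Bigl(\tfrac{p}{p-1}\bigl(V(a)-V(s)\bigr)\Bigr)^{-1/p}\,ds .
\]
A solution with exactly $j-1$ interior simple zeros is a chain of $j$ alternating arches, and $C^1$-continuity of $\phi_x$ at each interior zero (where $\phi=0$) forces all arches to share the same energy $C$, hence the same amplitude $a$. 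I would then compute the endpoint asymptotics: as $a\to0^+$, using $F(s)=o(|s|^p)$, one gets $\tau(a)\to\tau_0:=\bigl(\tfrac{p-1}{\lambda}\bigr)^{1/p}\!\int_0^1(1-t^p)^{-1/p}\,dt$, so that $2j\tau_0=1$ exactly when $\lambda=\lambda_j$; and as $a\to h(\lambda)^-$ the integrand behaves like $(h(\lambda)-s)^{-2/p}$ near the non-degenerate maximum, whence $\tau(a)\to x(\lambda)$, which is finite precisely for $p>2$ and $+\infty$ for $1<p\le 2$. The technical core is a monotonicity lemma stating that $\tau$ is strictly increasing on $(0,h(\lambda))$; I would prove it by the substitution $s=a\sigma$, differentiating in $a$, and using the monotonicity of $g$ to sign the resulting integrand.

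These ingredients give the three statements. For (i): a nontrivial solution requires $\tau(a)=\tfrac1{2j}\le\tfrac12$ for some $j\ge1$, but $\tau(a)>\tau_0\ge\tfrac12$ whenever $\lambda\le\lambda_1$, so $E_\lambda=\{0\}$. For (ii), when $1<p\le2$ the map $\tau$ is a strictly increasing bijection of $(0,h(\lambda))$ onto $(\tau_0,+\infty)$, so $\tau(a)=\tfrac1{2j}$ has a unique root precisely when $\lambda>\lambda_j$; for $\lambda_n<\lambda\le\lambda_{n+1}$ this occurs for $j=1,\dots,n$, producing exactly the single symmetric solutions $\phi_j^\pm\in Z_j^\pm$ (symmetry and membership in $S_j^\pm$ being read off from the evenness of $V$ and the reflection of arches about their midpoints, the $\pm$ coming from $\phi\mapsto-\phi$). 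For (iii), when $p>2$ the range of $\tau$ is $(\tau_0,x(\lambda))$: if $2jx(\lambda)\ge1$ there is a unique amplitude $a\le h(\lambda)$ realizing $\tau(a)=\tfrac1{2j}$ and $E_j^\pm$ is a single element, whereas if $0<2jx(\lambda)<1$ no amplitude below $h(\lambda)$ makes the arches fit. Here finite-time arrival opens a new mechanism: since $\phi\equiv h(\lambda)$ solves the equation and uniqueness fails at the degenerate point $(h(\lambda),0)$, one may glue a rising arch, a plateau of arbitrary length $t_i\ge0$, and a falling arch; a solution with $j-1$ interior zeros is then encoded by $(t_1,\dots,t_j)$ with $t_i\ge0$ and $\sum_i t_i=1-2jx(\lambda)$, a $(j-1)$-simplex diffeomorphic to $[0,1]^{j-1}$.

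The hard part, entirely concentrated in the case $p>2$, is twofold. First one must verify that these glued rise--plateau--fall functions are genuine solutions: this needs the sharpened regularity analysis refining \cite{otani}, ensuring that the junctions at $\phi=h(\lambda)$, $\phi_x=0$ produce no obstruction (these are precisely the points of possible loss of $C^2$ regularity flagged in the introduction) and that the equation holds across them. Second one must prove completeness, namely that the simplex of glued solutions, together with the (possibly absent) small-amplitude solution, exhausts $E_j^\pm$; this rests on the orbit classification and on the $C^1$-matching argument forcing a common energy, which rules out ``mixed'' configurations in which some arches attain $h(\lambda)$ and others do not. By comparison, the monotonicity of the time map, though delicate, is a routine computation once the scaling substitution is in place.
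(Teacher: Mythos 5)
Your proposal is correct and follows essentially the same route as the paper's machinery (and as the original Guedda--Veron argument it quotes): a first integral plus time-map analysis, monotonicity of the time map proved by the scaling substitution and the monotonicity of $g$, the endpoint limits $\tau_0$ (yielding the sequence $\lambda_n$) and $x(\lambda)$ (finite iff $p>2$), the common-energy/$C^1$-matching argument for completeness, and for $p>2$ the gluing of plateaus at $\pm h(\lambda)$ parametrized by a $(j-1)$-simplex, with the regularity of the junctions supplied by the refinement of \cite{otani}. The only cosmetic difference is that you parametrize arches by the amplitude $a$ while the paper parametrizes by the initial slope $r$ and passes through $z(r)$ and $I(z(r))$; since $z$ is strictly increasing these are equivalent.
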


\bigskip

We observe that \eqref{equilibriagueddaveron} always has a unique solution in $Z^\pm_j$, it is  $\phi^\pm_j\in Z^\pm_j$ for $j=1,\cdots,n$, since $f$ is odd-function. The number $x(\lambda)$ in the Theorem  \ref{theorem-gv}, that goes to $0$ as $\lambda$ becomes large,  gives origin to a second sequence corresponding to a secondary bifurcation to a continuous of solutions \eqref{equilibriagueddaveron}. When $f$ is the power nonlinearity, $s\mapsto b|s|^{r-2}s$, with $r>p$ and $b>0$, we have that this second sequence is $\tilde{\lambda}_n=\displaystyle n^p(p-1)\left[2\int^1_0(1-t^p-\frac{p}{r}(1-t^r))^{-1/p}dt\right]^p>\lambda_n$  for $n=1,2,\cdots$. Thus we can rewrite the statements $iii)$ to the Theorem \ref{theorem-gv} as, 
\begin{equation}\label{new-form}
	E_j^\pm=
	\begin{cases}
		\{\phi^\pm_j\},&\text{if $\lambda_n<\lambda\leqslant\tilde{\lambda}_n$}\\
		[0,1]^{j-1},&\text{if $\tilde{\lambda}_n<\lambda$.}
	\end{cases}
\end{equation}

\bigskip

Complementing the results of  \cite{TAYA,Guedda-Veron}, that is, when $f$ is either odd and a power (see \cite{TAYA}) or when $p=q$ (\cite{Guedda-Veron}) we consider more general functions $f$ which are not necessarily a power, not necessarily odd and $p$ is not necessarily equal to $q$. 
%Inspired by the works of \cite{TAYA,Guedda-Veron}. 
We show that we may also obtain the sequence of bifurcations that we will present next. To that end we define $r(\lambda)$ and $r^-(\lambda)$ are the positive numbers given by
\begin{equation}\label{max-inclination-positive}
r(\lambda)^p=\frac{\lambda p}{(p-1)} \left(\frac{|z^+|^q}{q} -F(z^+)\right).
\end{equation}
\begin{equation}\label{max-inclination-negative}
r^-(\lambda)^p=\frac{\lambda p}{(p-1)} \left(\frac{|z^-|^q}{q} -F(z^-)\right).
\end{equation}
$r^*(\lambda)=\min\{r(\lambda),r^-(\lambda)\}$ and $z(\cdot),S(\cdot),I(\cdot)$ and $J(\cdot)$ are the functions defined in \eqref{zero}, \eqref{negativezero}, \eqref{I} and \eqref{J}, respectively.

\bigskip

\underline{\textbf{If $1<q<p$};} (see Figure \ref{bifphe-qlessp}) the bifurcation phenomena is determined by the sequences

\begin{equation*}
	\lambda_n=0,\qquad n=1,2,\cdots
\end{equation*}
and $\tilde{\lambda}_n$ has the following structure
 \begin{equation}\label{lambda-tilde-N}
	\begin{cases}
		\lambda^\pm_n=+\infty,&\text{$n=1,2,\cdots$,  $1<p\leqslant2$}\\
		\lambda^+_{1}=\displaystyle\frac{p-1}{p}\left[2I(z^+))\right]^p,&\text{$2<p$  }\\
		\lambda^-_{1}=\displaystyle\frac{p-1}{p}\left[2J(z^-))\right]^p,&\text{$2<p$ }\\
		\lambda^\pm_{2k}=\displaystyle\frac{p-1}{p}\left[2kI(z(r^*(\lambda)))+2kJ(S(r^*(\lambda)))\right]^p,&\text{$k=1,2,\cdots$, $2<p$}\\
		\lambda^+_{2k-1}=	\displaystyle	\frac{p-1}{p}\left[2(k-1)I(z(r^*(\lambda)))+2kJ(S(r^*(\lambda)))\right]^p,&\text{$k=2,3,\cdots$, $2<p$}\\
		\lambda^-_{2k-1}=	\displaystyle	\frac{p-1}{p}\left[2kI(z(r^*(\lambda)))+2(k-1)J(S(r^*(\lambda)))\right]^p,&\text{$k=2,3,\cdots$, $2<p$.}
	\end{cases}
\end{equation}
We note that $\lambda^\pm_{n}<\lambda^\pm_{n+1}$ for $n=1,2,\cdots$.

\bigskip

\underline{\textbf{If $1<p<q$};} (see Figures \ref{bifphe-plessq-1}, \ref{bifphe-plessq-2}, \ref{bifphe-plessq-3} or Theorem \ref{Theorem-qgreatthanp}), the bifurcation phenomena is completely determined by the sequences  $\tilde{\lambda}_n$ (see \eqref{lambda-tilde-N}) and $\lambda_n$ is given by
\begin{equation}\label{lambda-star-N}
	\begin{cases}
		\lambda^+_{*,1}=\displaystyle\frac{p-1}{p}\left[2I(a_*))\right]^p,&\\
		\lambda^-_{*,1}=\displaystyle\frac{p-1}{p}\left[2J(b_*))\right]^p,&\\
		\lambda^\pm_{*,2k}=\displaystyle\frac{p-1}{p}\left[2kI_e\right]^p,&\text{$k=1,2,\cdots$,}\\
		\lambda^+_{*,2k-1}=	\displaystyle	\frac{p-1}{p}\left[2kI^+_o+2(k-1)J^+_o\right]^p,&\text{$k=2,3,\cdots$,}\\
		\lambda^-_{*,2k-1}=	\displaystyle	\frac{p-1}{p}\left[2(k-1)J^-_o+2kI^-_o\right]^p,&\text{$k=2,3,\cdots$,}
	\end{cases}
\end{equation}
where $a_*$ and $b_*$ are the points where the functions $I(\cdot)$ and $J(\cdot)$ attain their minimum, respectively, $I_e$ satisfies \eqref{Ie} and $I^+_o,J^+_o$ satisfy \eqref{IJpositiveodd} and $\lambda^\pm_{*,2k}<\lambda^\pm_{*,2(k+1)}$ for $k=1,2,\cdots$,  $\lambda^\pm_{*,2k-1}<\lambda^\pm_{*,2(k+1)-1}$ $k=2,3,\cdots$ and $\lambda^\pm_{*,n}<\lambda^\pm_{n}$ for each $n=1,2,\cdots$.

\bigskip

\underline{\textbf{If $1<q=p$};} (e.g. see  Figure \ref{bifphe-qequalp} or Theorem \ref{Theorem-qequalp}), the bifurcation phenomena is determined by the sequences $\lambda_n$ of eigenvalues of the one dimensional $p$-Laplace operator with Dirichlet boundary condition and $\tilde{\lambda}_n>\lambda_n$ defined by \eqref{lambda-tilde-N}. 

\bigskip

Thus, there are  the following theorems 
\begin{theorem}\label{Theorem-qequalp}
	Under the above assumptions on $f$ and $1<q=p<+\infty $ 
	\begin{enumerate}
		\item[]{$(i)$} If $\lambda \leqslant \lambda_1$ then the structure of solutions of \eqref{equilibria} is $E_\lambda=\{0\}$
		\item[]{$(ii)$} If $\lambda_n < \lambda \leqslant \lambda_{n+1}$,  for some $n=1,2,\cdots$, then
		
			$$
			E_\lambda=\{0\}\cup\bigcup^n_{k=1}E^\pm_k
			$$
		
		where $E^\pm_k\subset S^\pm_k$ satisfies 
		\begin{equation}\label{continuous-bifurcation}
			E^\pm_k=
			\begin{cases}
				\{\phi^\pm_k\},& \text{if $\lambda_k<\lambda\leqslant\lambda^\pm_k$,}\\
				\displaystyle[0,1]^{\mathcal{E}^\pm(k)},& \text{if $\lambda^\pm_k<\lambda$.}
			\end{cases}
		\end{equation}
		for $k=1,\cdots,n$, where $\mathcal{E}^\pm$ is defined in
		
		\begin{table}[htbp]
			\centering
			\caption{Definition of $\mathcal{E}^\pm$ - Area Condition}\label{tablaEk}
			\bigskip
			\begin{tabular}{|c|c|c|c|c|}		 \hline
				Definition & $A(z^+)= A(z^-)$ & $A(z^+)< A(z^-)$&$A(z^+)> A(z^-)$& for\\ \hline
				$[0,1]^0:=$ & $\{0\}$ & $\{0\}$&$\{0\}$ &$k=1$\\   \hline
				$\mathcal{E}^\pm(2k)=$ & $2k-1$ & $k-1$&$k-1$ &$k=1,2,\cdots$\\   \hline
				$\mathcal{E}^+(2k-1)=$ & $2k-2$ & $k-1$ &$k-2$& $k=2,3,\cdots$\\   \hline
				$\mathcal{E}^-(2k-1)=$ & $2k-2$ & $k-2$ &$k-1$ & $k=2,3,\cdots$\\   \hline
			\end{tabular}
		\end{table}
	where $A(z^+)=\frac{(z^+)^q}{q}-F(z^+)$ $($respectively $A(z^-)=\frac{|z^-|^q}{q}-F(z^-))$ is the area between of the function $|s|^{q-2}s-f(s)$ and the $s$ axis on the set $(0,z^+)$ $($respectively $(z^-,0))$.
	\end{enumerate}
\end{theorem}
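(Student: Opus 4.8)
The plan is to reduce the boundary value problem \eqref{equilibria} in the case $q=p$ to a shooting analysis on the phase plane. Multiplying the equation by $\phi_x$ and integrating produces the first integral $\frac{p-1}{p}|\phi_x|^p+\lambda H(\phi)=E$, where $H(s)=\frac{|s|^p}{p}-F(s)$ and $F(s)=\int_0^s f(\xi)\,d\xi$; since $H'(s)=|s|^{p-2}s-f(s)$, the function $H$ has critical points exactly at $0,z^+,z^-$, increases on $(0,z^+)$ to $H(z^+)=A(z^+)$ and decreases on $(z^-,0)$ from $H(z^-)=A(z^-)$ to $0$. The first step is to establish the normal form: every nontrivial solution splits into consecutive strictly monotone arches joining consecutive zeros, on each of which $|\phi_x|=\left[\frac{p\lambda}{p-1}\big(\mathcal H-H(\phi)\big)\right]^{1/p}$ with $\mathcal H:=E/\lambda=H(\text{amplitude})$; conservation of $E$ forces all positive arches to share one amplitude and all negative arches another, both with the same $\mathcal H$. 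I would prove this using uniqueness for the above separable first-order equation away from the degenerate set $\{\phi_x=0\}$, together with a careful analysis at points where $\phi_x=0$ showing that the only admissible branching is to rest at an equilibrium $z^\pm$, i.e. at full amplitude. This normal form, which is exactly the regularity refinement of \cite{otani} the paper advertises, is what guarantees completeness of the classification.

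Next I would analyse the arch time-maps $I,J$ of \eqref{I}, \eqref{J}, which in this normalisation read $I(a)=\int_0^a(H(a)-H(s))^{-1/p}\,ds$ and $J(b)=\int_b^0(H(b)-H(s))^{-1/p}\,ds$, so that a positive (resp. negative) arch of amplitude $a$ (resp. $b$) occupies an $x$-length $2\left(\frac{p-1}{p\lambda}\right)^{1/p}I(a)$ (resp. $2\left(\frac{p-1}{p\lambda}\right)^{1/p}J(b)$). Two properties are then needed. First, strict monotonicity of $I$ and $J$ in the amplitude, which I would extract from the monotonicity hypothesis \eqref{hypothesis-monotononicty} on $g$ after the substitution $s=at$ (the pure power $H(s)=|s|^p/p$ gives a constant time-map, and the perturbation with $g$ increasing tilts it upward). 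Second, the boundary behaviour: as the amplitude $\to 0$ both maps tend to $p^{1/p}\int_0^1(1-t^p)^{-1/p}\,dt=:p^{1/p}K$, recovering the eigenvalues $\lambda_n=n^p(p-1)(2K)^p$; while near $z^\pm$ the expansion $H(z^\pm)-H(s)\sim\tfrac12|H''(z^\pm)|(s-z^\pm)^2$ supplied by \eqref{hypothesis-zero} (which yields $H''(z^\pm)=L(p-1)|z^\pm|^{p-2}<0$) makes the integrand behave like $|s-z^\pm|^{-2/p}$. Hence $I(z^+)$ and $J(z^-)$ are finite precisely when $p>2$ and equal $+\infty$ when $1<p\leqslant2$; this dichotomy is the source of $\lambda_k^\pm=+\infty$ for $1<p\leqslant2$ and of the finite secondary bifurcation values in \eqref{lambda-tilde-N} when $p>2$.

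The counting is then bookkeeping of arches. A candidate in $S_k^\pm$ has $k$ sign-alternating arches, say $n_+$ positive and $n_-$ negative (determined by $k$ and the starting sign), so the boundary condition reads $2\left(\frac{p-1}{p\lambda}\right)^{1/p}\big[n_+I(a_+)+n_-J(a_-)\big]=1$. As $\mathcal H$ runs over $(0,\min\{A(z^+),A(z^-)\}]$ the left-hand side increases strictly from its limit $2k\left(\frac{p-1}{p\lambda}\right)^{1/p}p^{1/p}K$ to its maximal value; comparing the limit with $1$ shows a $k$-arch solution exists iff $\lambda>\lambda_k$, and the intermediate value theorem then gives a \emph{unique} no-rest solution exactly when $\lambda_k<\lambda\leqslant\lambda_k^\pm$, namely the single element $\phi_k^\pm$ of \eqref{continuous-bifurcation}. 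In particular part (i) follows, since for $\lambda\leqslant\lambda_1$ even one arch is already too long, so $E_\lambda=\{0\}$, and part (ii) follows because only $k=1,\dots,n$ are admissible when $\lambda_n<\lambda\leqslant\lambda_{n+1}$. For $\lambda>\lambda_k^\pm$ the maximal no-rest length is already $<1$, so every solution must rest at a full-amplitude equilibrium; the amplitude is pinned at $\min\{A(z^+),A(z^-)\}$ and the only freedom left is the nonnegative rest-times inserted at the arches sitting at their extremal value. These restable arches are all arches if $A(z^+)=A(z^-)$, the positive ones only if $A(z^+)<A(z^-)$, and the negative ones only if $A(z^+)>A(z^-)$; imposing that the rest-times sum to $1$ minus the fixed arch length realises $E_k^\pm$ as a simplex of dimension (number of restable arches) $-1$, which is precisely the exponent $\mathcal E^\pm(k)$ of Table \ref{tablaEk} and is homeomorphic to $[0,1]^{\mathcal E^\pm(k)}$.

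The main obstacle, as I see it, is concentrated at the degenerate set $\{\phi_x=0\}$ and has two faces. On the existence/completeness side, because $-(|\phi_x|^{p-2}\phi_x)_x$ is degenerate where $\phi_x=0$ the initial value problem loses uniqueness there, so one must show that the only way a solution can leave an arch is by resting at $z^\pm$, and conversely that every glued ``arch $+$ rest $+$ arch'' function is a genuine solution of \eqref{equilibria} in the relevant sense. This is exactly where the sharpened regularity analysis is indispensable: identifying the contact order $z^\pm-\phi\sim(x_0-x)^{p/(p-2)}$ at a junction pins down finite-time arrival at the equilibrium (for $p>2$) and the precise points of $C^2$ regularity, and rules out spurious branchings. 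On the analytic side, the endpoint estimates of $I$ and $J$ at the amplitudes $z^\pm$ are genuinely singular improper integrals; controlling them is what makes the finite-versus-infinite dichotomy, and hence the whole $1<p\leqslant2$ versus $p>2$ split, rigorous.
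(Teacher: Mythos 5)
Your proposal is correct and takes essentially the same route as the paper: your first integral with energy parameter $\mathcal{H}$ is just a reparametrization of the paper's shooting slope $r$ (via $r^p=\frac{\lambda p}{p-1}\mathcal{H}$), and your arch time-maps, their limits at amplitude $0$ recovering $\lambda_n$, the $1<p\leqslant 2$ versus $p>2$ dichotomy at $z^\pm$, the counting condition $n_+I(a_+)+n_-J(a_-)$ scaled to $1$, and the flat-core rest-time simplices of dimension (restable arches)$-1$ are precisely Lemmas \ref{function_I}--\ref{function_J}, the necessary-and-sufficient conditions of Section \ref{S-1}, and the constructions of Sections \ref{S-pos-neg}--\ref{S-odd} that yield Table \ref{tablaEk}. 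The only nitpick is that for the single-sign case $k=1$ the resting energy is pinned at $A(z^+)$ (resp.\ $A(z^-)$) alone rather than at $\min\{A(z^+),A(z^-)\}$, which is why the paper treats positive and negative solutions separately with $r(\lambda)$ and $r^-(\lambda)$ before passing to $r^*(\lambda)$ for sign-changing solutions.
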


\begin{theorem}\label{Theorem-qlessp}
	Under the above assumptions on $f$ and $1<q<p<+\infty$ for each $\lambda>0$
		$$
		E_\lambda=\{0\}\cup\bigcup^\infty_{k=1}E^\pm_k
		$$
		where $E^\pm_k\subset S^\pm_k$ satisfies \eqref{continuous-bifurcation}.
\end{theorem}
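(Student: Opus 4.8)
The plan is to reduce \eqref{equilibria} to its phase portrait and re-use the machinery already developed for Theorem \ref{Theorem-qequalp}, isolating the one feature that changes when $q<p$. Multiplying the equation by $\phi_x$ and integrating gives the first integral
$$\frac{p-1}{p}\,|\phi_x|^{p}+\lambda H(\phi)=E,\qquad H(s)=\frac{|s|^{q}}{q}-F(s),\quad F(s)=\int_0^s f(\xi)\,d\xi,$$
constant along every solution. At a zero of $\phi$ one has $|\phi_x|=(pE/(p-1))^{1/p}=:r$, so the slope has the same magnitude at every interior zero; hence all positive humps of a given solution are congruent with common amplitude $a(E)$ (the positive root of $\lambda H(a)=E$, unique since $H$ is strictly increasing on $(0,z^+)$) and all negative humps are congruent with common depth $b(E)$. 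A solution in $S_k^{\pm}$ is thus a concatenation of $k$ reflection-symmetric humps in the sign pattern fixed by $\pm$, each positive hump of width $2T^{+}(a)=2\lambda^{-1/p}((p-1)/p)^{1/p}I(a)$ and each negative one of width $2T^{-}(b)=2\lambda^{-1/p}((p-1)/p)^{1/p}J(b)$, with $I,J$ as in \eqref{I} and \eqref{J}. Such a solution exists exactly when the time map
$$\Phi_k^{\pm}(E):=2\,n_k^{+}\,T^{+}(a(E))+2\,n_k^{-}\,T^{-}(b(E)),$$
in which $(n_k^{+},n_k^{-})$ counts the positive and negative humps, takes the value $1$ for some $E$ in $\big(0,\lambda\min\{A(z^{+}),A(z^{-})\}\big]$; the upper endpoint is the maximal energy, corresponding through \eqref{max-inclination-positive}--\eqref{max-inclination-negative} to the slope $r^{*}(\lambda)$.

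The decisive point is the small-amplitude behaviour, which is where $q<p$ differs from $q=p$. Because \eqref{hypothesis-zero} forces $g(s)=f(s)/(|s|^{q-2}s)\to0$, we have $F(s)=o(|s|^{q})$ and hence $H(s)\sim|s|^{q}/q$ as $s\to0$. Substituting $\phi=a t$ in $I(a)=\int_0^{a}(H(a)-H(\phi))^{-1/p}\,d\phi$ gives, to leading order, $I(a)\sim q^{1/p}\big(\int_0^1(1-t^{q})^{-1/p}\,dt\big)\,a^{\,1-q/p}$, and since $q<p$ the exponent $1-q/p$ is positive, so $I(a)\to0$ as $a\to0^{+}$; likewise $J(b)\to0$ as $b\to0^{-}$. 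Therefore $\Phi_k^{\pm}(E)\to0$ as $E\to0^{+}$ for every $k$, which is exactly the statement $\lambda_k=0$. Consequently, for every $\lambda>0$ and every $k$ the continuous map $\Phi_k^{\pm}$ starts below $1$, so by the intermediate value theorem the period equation has a root: each $E_k^{\pm}$ is nonempty, and $E_\lambda=\{0\}\cup\bigcup_{k\ge1}E_k^{\pm}$ contains solutions with arbitrarily many zeros for every $\lambda>0$.

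To obtain the dichotomy \eqref{continuous-bifurcation} I would analyse $\Phi_k^{\pm}$ on $\big(0,\lambda\min\{A(z^{+}),A(z^{-})\}\big]$. Monotonicity of the hump-widths $I,J$, which I expect to follow from hypothesis \eqref{hypothesis-monotononicty} on $g$ as in the $q=p$ case, makes $\Phi_k^{\pm}$ strictly increasing, so any root below the maximal energy is unique. The value at the maximal energy is finite or infinite according to the behaviour of $I,J$ near $z^{\pm}$: by \eqref{hypothesis-zero} the reaction satisfies $|s|^{q-2}s-f(s)\sim L(q-1)|z^{\pm}|^{q-2}(s-z^{\pm})$, whence $H(z^{\pm})-H(\phi)\sim c(z^{\pm}-\phi)^{2}$ with $c>0$, and the integrand behaves like $(z^{\pm}-\phi)^{-2/p}$, which is integrable iff $p>2$. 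Thus for $1<p\le2$ one has $\Phi_k^{\pm}(E)\to+\infty$ and the unique root always exists, so $E_k^{\pm}=\{\phi_k^{\pm}\}$, consistent with $\lambda_k^{\pm}=+\infty$ in \eqref{lambda-tilde-N}. For $p>2$ the maximal value equals $\lambda^{-1/p}((p-1)/p)^{1/p}\big[\,2n_k^{+}I(z(r^{*}))+2n_k^{-}J(S(r^{*}))\,\big]$, which is finite and decreasing in $\lambda$ and equals $1$ precisely at the value $\lambda_k^{\pm}$ of \eqref{lambda-tilde-N}; for $0<\lambda\le\lambda_k^{\pm}$ it is $\ge1$, giving the unique $\phi_k^{\pm}$, while for $\lambda>\lambda_k^{\pm}$ it is $<1$, i.e. the fastest periodic orbit already overshoots the total length $1$ and periodic concatenations no longer suffice.

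For $\lambda>\lambda_k^{\pm}$ I would build the continuum by inserting plateaus. When a hump attains an amplitude equal to $z^{+}$ or $z^{-}$ its turning value is a zero of the reaction, so the constant $\phi\equiv z^{\pm}$ solves the equation there and the solution may rest at that level for an arbitrary time before descending; each such plateau is a free parameter. A hump can carry a plateau only if its turning value is an equilibrium, that is only if its side attains the maximal energy: positive plateaus are available iff $A(z^{+})\le A(z^{-})$ and negative plateaus iff $A(z^{-})\le A(z^{+})$. Counting the number of plateau-bearing humps among the $k$ humps of $S_k^{\pm}$ and subtracting the single constraint that the total length be $1$ reproduces exactly the exponents $\mathcal{E}^{\pm}(k)$ of Table \ref{tablaEk} in all three cases $A(z^+)=A(z^-)$, $A(z^+)<A(z^-)$, $A(z^+)>A(z^-)$ and both parities of $k$, and the family is parametrised by the plateau lengths, diffeomorphic to $[0,1]^{\mathcal{E}^{\pm}(k)}$. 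The main obstacle is precisely this last step: one must verify that each concatenation of humps and plateaus is a genuine (weak, $C^{1}$) solution of \eqref{equilibria} across the junctions where $\phi_x=0$ and $\phi=z^{\pm}$, which is the refined regularity flagged in the abstract and sharpens \cite{otani}, and that the plateau-length parametrisation is a homeomorphism onto the cube; securing the monotonicity of $I$ and $J$ from \eqref{hypothesis-monotononicty}, needed for uniqueness below threshold, is the remaining technical point.
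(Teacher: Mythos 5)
Your proposal is, in substance, the paper's own argument in a different parametrization: your energy $E$ is $\frac{p-1}{p}r^p$ for the initial slope $r$ of Section \ref{S-1}, your half-widths $T^{+},T^{-}$ are the paper's $\theta,\alpha$ expressed through $I$ and $J$ exactly as in \eqref{thetaI} and \eqref{alphaJ}, your small-amplitude asymptotics $I(a)\sim q^{1/p}a^{1-q/p}\int_0^1(1-t^q)^{-1/p}dt\to 0$ is item ii) of Lemma \ref{function_I} (whence $\lambda_k=0$ for $q<p$), the dichotomy at $z^{\pm}$ between $1<p\leqslant 2$ (divergent time map, unique solution for all $\lambda$) and $p>2$ (finite time map, flat cores beyond $\lambda^\pm_k$) is item iii), the monotonicity you defer is precisely item iv), obtained from \eqref{Fsupestimative}, i.e. from hypothesis \eqref{hypothesis-monotononicty}, and your plateau-insertion continuum with the plateau lengths as free parameters is the flat-core construction of Sections \ref{S-even} and \ref{S-odd}, whose validity across the junctions is what Theorem \ref{regularityelliptic} secures. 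Your count of plateau-bearing humps minus one length constraint reproduces Table \ref{tablaEk} correctly for every $k\geqslant 2$ and both parities.

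There is, however, one concrete error: you impose the energy cap $E\leqslant\lambda\min\{A(z^+),A(z^-)\}$ (slope $r^{*}(\lambda)$) uniformly in $k$, and accordingly allow positive plateaus only when $A(z^+)\leqslant A(z^-)$. That is correct for sign-changing solutions, since all humps of one solution share the same energy and the smaller area caps it, but it fails for $k=1$. A solution in $S_1^{+}$ never visits the negative side, so its admissible energies are $(0,\lambda A(z^+)]$ (slopes up to $r(\lambda)$, not $r^{*}(\lambda)$), its hump can always reach $z^{+}$, and for $\lambda>\lambda^+_{1}=\frac{p-1}{p}\left[2I(z^+)\right]^p$ the flat core at $z^{+}$ is available \emph{regardless} of how the areas compare --- this is why \eqref{lambda-tilde-N} defines $\lambda^+_1$ and $\lambda^-_1$ through $I(z^+)$ and $J(z^-)$, and why Table \ref{tablaEk} gives $[0,1]^0=\{0\}$ at $k=1$ in all three area cases. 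Under your rule as stated, if $A(z^+)>A(z^-)$ the positive hump could never exceed the amplitude $\hat a<z^{+}$ with $\frac{\hat a^q}{q}-F(\hat a)=A(z^-)$, the period equation $2T^{+}(a(E))=1$ would lose its root already for $\lambda>\frac{p-1}{p}\left[2I(\hat a)\right]^p<\lambda^+_1$, and no plateau would be admissible, leaving $E_1^{+}=\emptyset$ for large $\lambda$ --- contradicting the theorem (symmetrically for $S_1^{-}$ when $A(z^+)<A(z^-)$). The fix is the one the paper implements by treating the positive and negative solutions separately in Section \ref{S-pos-neg}, with $r\in(0,r(\lambda))$ for $S^+_1$ and $r\in(0,r^-(\lambda))$ for $S^-_1$, reserving $r^{*}(\lambda)$ for the sign-changing classes; with that amendment, together with the two checks you flagged (monotonicity of $I,J$ and junction regularity, supplied by Lemma \ref{function_I} and Theorem \ref{regularityelliptic}), your argument coincides with the paper's proof.
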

and
\begin{theorem}\label{Theorem-qgreatthanp}
	Under the above assumptions on $f$ and $1<p<q<+\infty $ 
	\begin{enumerate}
		\item[]{$(i)$} If $\lambda < \min\{\lambda^+_{*,1},\lambda^-_{*,1}\}$ then the structure of solutions of \eqref{equilibria} is $E_\lambda=\{0\}$
		\item[]{$(ii)$} If $\lambda^\pm_{*,2n} \leqslant \lambda < \lambda^\pm_{*,2(n+1)}$,  for some $n=1,2,\cdots$, then
		
			$$
			E_\lambda=\{0\}\cup\bigcup^n_{k=1}E^\pm_k\cup\bigcup^{n}_{k=1}\{\psi^\pm_k\}
			$$
		
		where $\psi_k\in S^\pm_k$ is a solution of \eqref{equilibria} for each $k=1, \cdots,n$ and  $E^\pm_k\subset S^\pm_k$ satisfies
		\begin{equation}\label{bifurcationform3}
			E^\pm_k=
			\begin{cases}
				\{\psi^\pm_k,\}& \text{if $\lambda^\pm_{*,k}=\lambda$,}\\
				\{\phi^\pm_k\},& \text{if $\lambda^\pm_{*,k}<\lambda\leqslant\lambda^\pm_k$,}\\
				\displaystyle[0,1]^{\mathcal{E}^\pm(k)},& \text{if $\lambda^\pm_k<\lambda$.}
			\end{cases}
		\end{equation}
		for $k=1, \cdots,n$.
		
		\item[]{$(iii)$} If $\lambda^\pm_{*,2n-1} \leqslant \lambda < \lambda^\pm_{*,2(n+1)-1}$,  for some $n=1,2,\cdots$, then
		
			$$
			E_\lambda=\{0\}\cup\bigcup^n_{k=1}E^\pm_k\cup\bigcup^{n}_{k=1}\{\psi^\pm_k\}
			$$
		
		where $\psi_k\in S^\pm_k$ is a solution of \eqref{equilibria} for each $k=1, \cdots,n$ and  $E^\pm_k$ satisfies \eqref{bifurcationform3}.

	\end{enumerate}
\end{theorem}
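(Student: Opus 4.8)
The plan is to reduce the boundary value problem \eqref{equilibria} to a shooting problem governed by the first integral of the equation. Multiplying the equation by $\phi_x$ and integrating, any solution conserves the energy
\[
H=\frac{p-1}{p}|\phi_x|^p+\lambda\left(\frac{|\phi|^q}{q}-F(\phi)\right),\qquad F(s)=\int_0^s f(\xi)\,d\xi ,
\]
so that at every interior zero of $\phi$ the quantity $|\phi_x|$ takes one common value $r$, and along each nodal arc the amplitude $a$ (the value at the critical point) is linked to $r$ through $\frac{p-1}{p}r^p=\lambda\big(\frac{|a|^q}{q}-F(a)\big)$. Because $\frac{|s|^q}{q}-F(s)$ is strictly increasing on $(0,z^+)$ and strictly decreasing on $(z^-,0)$ by \eqref{hypothesis-monotononicty}, this relation defines the amplitude functions $z(\cdot)$ and $S(\cdot)$ of \eqref{zero}--\eqref{negativezero} and caps $r$ at $r^*(\lambda)=\min\{r(\lambda),r^-(\lambda)\}$, with $r(\lambda),r^-(\lambda)$ from \eqref{max-inclination-positive}--\eqref{max-inclination-negative}. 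First I would establish a bijection between solutions with $\phi_x(0)>0$ (resp. $<0$) that vanish exactly $n+1$ times and the data consisting of the common slope $r$, the alternating arrangement of positive/negative arcs, and the constraint that the total length equal $1$; each positive arc contributes length $\big(\frac{p-1}{\lambda p}\big)^{1/p}I(a)$ and each negative arc $\big(\frac{p-1}{\lambda p}\big)^{1/p}J(a)$, with $I,J$ as in \eqref{I}--\eqref{J}.

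Next I would carry out the quantitative analysis of $I$ and $J$ under the standing hypotheses with $q>p$. Since $\lim_{s\to0}g(s)=0$ in \eqref{hypothesis-zero}, near the origin the potential is asymptotic to $|s|^q/q$, and the substitution $s=at$ gives $I(a)\sim c\,a^{1-q/p}\to+\infty$ as $a\to0^+$ because $q>p$; the same holds for $J$. At the other endpoint the limit $L<0$ in \eqref{hypothesis-zero} forces $|s|^{q-2}s-f(s)$ to vanish linearly at $z^+$, so the integrand of $I$ behaves like $u^{-2/p}$ near the peak $a=z^+$; hence $I(z^+)<\infty$ precisely when $p>2$ and $I(z^+)=+\infty$ when $1<p\le2$, which is exactly the dichotomy recorded in \eqref{lambda-tilde-N}. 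Combining the blow-up at $0$ with interior finiteness, $I$ and $J$ attain interior minima at $a_*$ and $b_*$; minimising the sums $mI+\ell J$ over the admissible common slope yields the quantities $I_e,I_o^\pm,J_o^\pm$ and hence the sequence $\lambda^\pm_{*,n}$ of \eqref{lambda-star-N}, while the endpoint values $I(z^+),J(z^-)$ yield $\lambda^\pm_n$ of \eqref{lambda-tilde-N}. Monotonicity in the number of arcs then gives the ordering $\lambda^\pm_{*,n}<\lambda^\pm_n$ and the nesting asserted before the statement.

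With these maps understood, the count follows by solving the length-one constraint $\big(\frac{\lambda p}{p-1}\big)^{1/p}=2\,(\text{arc sum})$ for $r$. As a function of $r$ the arc sum inherits from $I,J$ a single interior minimum, so the equation has no root for $\lambda$ below $\lambda^\pm_{*,k}$, exactly one root $\psi^\pm_k$ (at the minimiser) at $\lambda=\lambda^\pm_{*,k}$, and two roots for $\lambda>\lambda^\pm_{*,k}$, born in a fold; these are the two solutions of \eqref{bifurcationform3}. As $\lambda$ increases the larger-amplitude root moves toward $r^*(\lambda)$, and at $\lambda=\lambda^\pm_k$ its saturated arc attains the equilibrium amplitude $z^+$ or $z^-$. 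Here I would invoke the degeneracy of the $p$-Laplacian for $p>2$, refining \cite{otani}, to show that a $C^1$ solution may rest at $z^\pm$ on an interval of arbitrary length while remaining a solution, each such plateau contributing one free parameter; the branch reaching $z^\pm$ then opens into the continuum $[0,1]^{\mathcal{E}^\pm(k)}$, while the other root persists as a single solution. Counting the independent plateaus subject to the one length constraint gives the dimension $\mathcal{E}^\pm(k)$, and the comparison of the areas $A(z^+)$ and $A(z^-)$ fixes which side saturates first through $r(\lambda)$ versus $r^-(\lambda)$, producing the columns of Table \ref{tablaEk}. Parts $(ii)$ and $(iii)$ (even/odd indices) and the $\pm$ labels merely record whether the last arc is positive or negative.

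The main obstacle I expect is this last step: the plateau/regularity analysis and the exact dimension bookkeeping. Establishing that for $p>2$ the only failures of $C^2$ regularity occur at the saturation points $\phi=z^\pm$, $\phi_x=0$, and that precisely there a continuum of admissible $C^1$ gluings exists, is the delicate point refining the regularity theory of \cite{otani}; and in the non-odd setting the positive and negative arcs saturate at different parameter values, so the free plateaus must be tracked separately on each side and their number depends on the sign of $A(z^+)-A(z^-)$, which is the source of the case distinction in \eqref{bifurcationform3} and Table \ref{tablaEk}.
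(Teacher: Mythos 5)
Your proposal follows essentially the same route as the paper: the first-integral/time-map reduction with arc lengths $\bigl(\tfrac{p-1}{\lambda p}\bigr)^{1/p}I(z(r))$ and $\bigl(\tfrac{p-1}{\lambda p}\bigr)^{1/p}J(S(r))$, the asymptotics of $I,J$ at $0$ and at $z^\pm$ (blow-up since $q>p$, finiteness at the endpoint iff $p>2$) yielding the fold at $\lambda^\pm_{*,k}$ and the saturation thresholds $\lambda^\pm_k$, and the flat-core continua for $p>2$ with dimension governed by comparing $A(z^+)$ and $A(z^-)$ through $r^*(\lambda)=\min\{r(\lambda),r^-(\lambda)\}$, plus the refinement of \cite{otani} for gluing plateaus. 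This is exactly the paper's argument (Sections 2--6), at the same level of rigor, so the proposal is correct as an outline of the paper's proof.
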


\begin{figure}
	\centering
	
	\begin{tikzpicture}[scale=0.6,rotate=-90]
		
		\clip(-6.26,-0.7) rectangle (6.4,14.21);
		
		\draw[color=blue] [samples=50,rotate around={0:(0,0)},xshift=0cm,yshift=0cm,line width=6pt,domain=-5.789473684210526:-2.52)] plot (\x,{(\x)^2/2/0.2631578947368421});
		\draw [samples=50,rotate around={0:(0,0)},xshift=0cm,yshift=0cm,line width=2pt,domain=-2.52:2.52)] plot (\x,{(\x)^2/2/0.2631578947368421});
		\draw[color=blue] [samples=50,rotate around={0:(0,0)},xshift=0cm,yshift=0cm,line width=6pt,domain=2.52:5.789473684210526)] plot (\x,{(\x)^2/2/0.2631578947368421});
		
		\draw[color=blue] [samples=50,rotate around={0:(0,0)},xshift=0cm,yshift=0cm,line width=6pt,domain=-7.2727272727272725:-2.5)] plot (\x,{(\x)^2/2/0.45454545454545453});
		\draw [samples=50,rotate around={0:(0,0)},xshift=0cm,yshift=0cm,line width=2pt,domain=-2:2)] plot (\x,{(\x)^2/2/0.14285714285714285});
		\draw [samples=50,rotate around={0:(0,0)},xshift=0cm,yshift=0cm,line width=2pt,domain=-2.5:2.87)] plot (\x,{(\x)^2/2/0.45454545454545453});
		\draw[color=blue] [samples=50,rotate around={0:(0,0)},xshift=0cm,yshift=0cm,line width=6pt,domain=2.87:7.2727272727272725)] plot (\x,{(\x)^2/2/0.45454545454545453});

		\draw [color=blue] [samples=50,rotate around={0:(0,0)},xshift=0cm,yshift=0cm,line width=6pt,domain=-10:-2.92)] plot (\x,{(\x)^2/2/0.7142857142857143});
		\draw [samples=50,rotate around={0:(0,0)},xshift=0cm,yshift=0cm,line width=2pt,domain=-2.92:2.92)] plot (\x,{(\x)^2/2/0.7142857142857143});
		\draw[color=blue] [samples=50,rotate around={0:(0,0)},xshift=0cm,yshift=0cm,line width=6pt,domain=2.92:10)] plot (\x,{(\x)^2/2/0.7142857142857143});

		\draw[color=blue] [samples=50,rotate around={0:(0,0)},xshift=0cm,yshift=0cm,line width=2pt,domain=-12:-1.5)] plot (\x,{(\x)^2/2/1});
		\draw [samples=50,rotate around={0:(0,0)},xshift=0cm,yshift=0cm,line width=2pt,domain=-1.5:2.47)] plot (\x,{(\x)^2/2/1});
		\draw[color=blue] [samples=50,rotate around={0:(0,0)},xshift=0cm,yshift=0cm,line width=2pt,domain=2.47:12)] plot (\x,{(\x)^2/2/1});
		\draw [line width=2pt] (0,-0.45) -- (0,14.21);
		\draw [line width=2pt] (-6.0,0) -- (6,0);
		\draw [line width=1pt] (0,1.1) -- (-5.5,1.1);
		\draw [line width=1pt] (0,3.05) -- (5.5,3.05);
		\draw [line width=1pt] (5.5,6) -- (-5.5,6);
		\draw [line width=1pt] (0,6.85) -- (-5.5,6.85);
		\draw [line width=1pt] (0,9.05) -- (5.5,9.05);
		\draw [line width=1pt] (5.5,12.05) -- (-5.5,12.05);
		
		\begin{scriptsize}
			\draw[color=black] (0.25,-0.25) node {$0$};
			\draw[color=black] (0.25,14.0) node {$\lambda$};
			\draw[color=black] (-5.8,-0.4) node {$E^+_k$};
			\draw[color=black] (5.8,-0.4) node {$E^-_k$};
			
			\draw[color=black] (-5.5,2.2) node {$\lambda=\lambda^+_1$};
			\draw[color=black] (-5.5,5.2) node {$\lambda=\lambda^+_2$};
			\draw[color=black] (-5.5,7.8) node {$\lambda=\lambda^+_3$};
			\draw[color=black] (-5.5,11.2) node {$\lambda=\lambda^+_4$};
			
			\draw[color=black] (5.5,2.2) node {$\lambda=\lambda^-_1$};
			\draw[color=black] (5.5,5.2) node {$\lambda=\lambda^-_2$};
			\draw[color=black] (5.5,8.2) node {$\lambda=\lambda^-_3$};
			\draw[color=black] (5.5,11.2) node {$\lambda=\lambda^-_4$};
			
		\end{scriptsize}
	\end{tikzpicture}
	\caption{Bifurcation phenomena  for \eqref{equilibria} with $p>2,$ $q<p$  and $A(z^+)=A(z^-)$. \underline{Description of the figures}: Solutions of \eqref{equilibria} associated to blue traces have a flat core. A thin blue trace represents a single solution with a flat core and thick blue trace represents a continuous of solutions with flat cores} \label{bifphe-qlessp}
\end{figure}
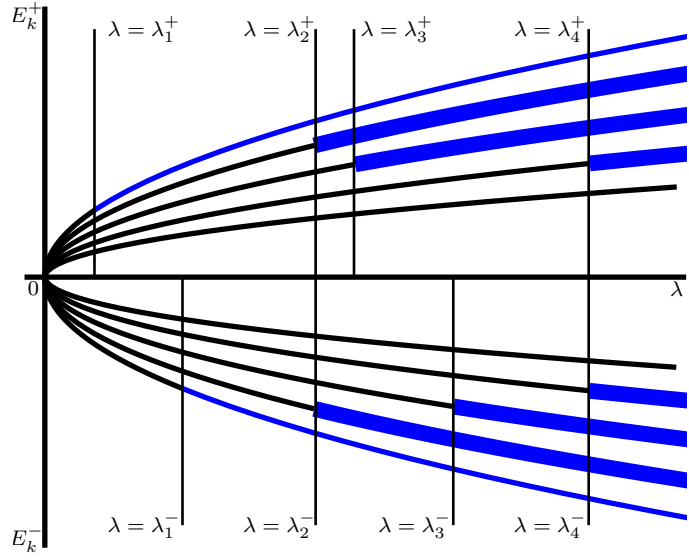

\bigskip

\begin{figure}
	\centering
	
	\begin{tikzpicture}[scale=0.6,rotate=-90]
		\clip(-6.26,-0.7) rectangle (8.0,14.21);
		
		\draw [color=blue][samples=50,rotate around={0:(-3,1)},xshift=-3cm,yshift=1cm,line width=2pt,domain=-4.347826086956522:-0.5)] plot (\x,{(\x)^2/2/0.2173913043478261});
		\draw [color=black][samples=50,rotate around={0:(-3,1)},xshift=-3cm,yshift=1cm,line width=2pt,domain=-0.5:4.347826086956522)] plot (\x,{(\x)^2/2/0.2173913043478261});

		\draw[color=blue] [samples=50,rotate around={0:(-3,3)},xshift=-3cm,yshift=3cm,line width=4.5pt,domain=-3.928571428571429:-0.5)] plot (\x,{(\x)^2/2/0.17857142857142858});
		\draw[color=black] [samples=50,rotate around={0:(-3,3)},xshift=-3cm,yshift=3cm,line width=2pt,domain=-0.5:3.928571428571429)] plot (\x,{(\x)^2/2/0.17857142857142858});
		
		\draw [color=blue][samples=50,rotate around={0:(-3,5)},xshift=-3cm,yshift=5cm,line width=4.5pt,domain=-3.25:-0.5)] plot (\x,{(\x)^2/2/0.125});
		\draw [color=black][samples=50,rotate around={0:(-3,5)},xshift=-3cm,yshift=5cm,line width=2pt,domain=-0.5:3.25)] plot (\x,{(\x)^2/2/0.125});

		\draw[color=blue] [samples=50,rotate around={0:(-3,7)},xshift=-3cm,yshift=7cm,line width=4.5pt,domain=-2.4285714285714284:-0.7)] plot (\x,{(\x)^2/2/0.07142857142857142});
		\draw[color=black] [samples=50,rotate around={0:(-3,7)},xshift=-3cm,yshift=7cm,line width=2pt,domain=-0.7:2.4285714285714284)] plot (\x,{(\x)^2/2/0.07142857142857142});

		\draw[color=black] [samples=50,rotate around={0:(-3,9)},xshift=-3cm,yshift=9cm,line width=2pt,domain=-1.8333333333333333:1.8333333333333333)] plot (\x,{(\x)^2/2/0.041666666666666664});
		
		\draw[color=black] [samples=50,rotate around={0:(-3,11)},xshift=-3cm,yshift=11cm,line width=2pt,domain=-1.4000000000000001:1.4000000000000001)] plot (\x,{(\x)^2/2/0.025});
		
		\draw[color=black] [samples=50,rotate around={0:(5,11)},xshift=5cm,yshift=11cm,line width=2pt,domain=-1.1:1.1)] plot (\x,{(\x)^2/2/0.025});
		
		\draw [samples=50,rotate around={0:(5,10)},xshift=5cm,yshift=10cm,line width=2pt,domain=-1.857142857142857:1.857142857142857)] plot (\x,{(\x)^2/2/0.07142857142857142});
		
		\draw[color=black] [samples=50,rotate around={0:(5,7)},xshift=5cm,yshift=7cm,line width=2pt,domain=-2.2:0.83)] plot (\x,{(\x)^2/2/0.1});
		\draw[color=blue] [samples=50,rotate around={0:(5,7)},xshift=5cm,yshift=7cm,line width=4.5pt,domain=0.83:2.2)] plot (\x,{(\x)^2/2/0.1});

		\draw [color=black][samples=50,rotate around={0:(5,6)},xshift=5cm,yshift=6cm,line width=2pt,domain=-3:0.7)] plot (\x,{(\x)^2/2/0.16666666666666666});
		\draw [color=blue][samples=50,rotate around={0:(5,6)},xshift=5cm,yshift=6cm,line width=4.5pt,domain=0.7:3)] plot (\x,{(\x)^2/2/0.16666666666666666});

		\draw [color=black][samples=50,rotate around={0:(5,3)},xshift=5cm,yshift=3cm,line width=2pt,domain=-2.857142857142857:0.5)] plot (\x,{(\x)^2/2/0.17857142857142858});
		\draw [color=blue][samples=50,rotate around={0:(5,3)},xshift=5cm,yshift=3cm,line width=4.5pt,domain=0.5:2.857142857142857)] plot (\x,{(\x)^2/2/0.17857142857142858});

		\draw[color=black] [samples=50,rotate around={0:(5,2)},xshift=5cm,yshift=2cm,line width=2pt,domain=-3.5:0.5)] plot (\x,{(\x)^2/2/0.25});
		\draw [color=blue] [samples=50,rotate around={0:(5,2)},xshift=5cm,yshift=2cm,line width=2pt,domain=0.5:3.5)] plot (\x,{(\x)^2/2/0.25});
		
		\draw [line width=2pt] (0,-0.45) -- (0,14.21);
		\draw [line width=1pt,domain=0.0:5.0] plot(\x,{(10-0*\x)/1});
		\draw [line width=1pt,domain=0.0:5.0] plot(\x,{(6-0*\x)/1});
		
		\draw [line width=1pt,domain=-3.0:0] plot(\x,{(9-0*\x)/1});
		\draw [line width=1pt,domain=-3.0:0] plot(\x,{(5-0*\x)/1});
		\draw [line width=1pt,domain=-3.0:5.0] plot(\x,{(11-0*\x)/1});
		\draw [line width=1pt,domain=-3.0:5.0] plot(\x,{(7-0*\x)/1});
		\draw [line width=1pt,domain=-3.0:5.0] plot(\x,{(3-0*\x)/1});
		\draw [line width=1pt,domain=0.0:5.0] plot(\x,{(2-0*\x)/1});
		\draw [line width=1pt,domain=-3.0:0] plot(\x,{(1-0*\x)/1});
		\draw [line width=2pt,domain=-6.26:8.0] plot(\x,{(-0-0*\x)/1});

		\draw [color=blue][line width=1pt] (0,1.54) -- (-5.6,1.54);
		\draw [color=blue][line width=1pt] (0,3.7) -- (-5.6,3.7);
		\draw [color=blue][line width=1pt] (0,6.) -- (-5.6,6.);
		\draw [color=blue][line width=1pt] (0,10.47) -- (-5.6,10.47);
		
		\draw [color=blue][line width=1pt] (0,2.52) -- (7.3,2.52);
		\draw [color=blue][line width=1pt] (0,3.7) -- (7.3,3.7);
		\draw [color=blue][line width=1pt] (0,7.49) -- (7.3,7.49);
		\draw [color=blue][line width=1pt] (0,10.47) -- (7.3,10.47);

		\begin{scriptsize}
			\draw[color=black] (-0.25,-0.25) node {$0$};
			\draw[color=black] (0.25,14.0) node {$\lambda$};
			\draw[color=black] (-5.9,-0.4) node {$E^+_k$};
			\draw[color=black] (7.7,-0.4) node {$E^-_k$};
			
			\draw[color=blue] (-5.9,1.54) node {$\lambda^+_{1}$};
			\draw[color=blue] (-5.9,3.7) node {$\lambda^+_{2}$};
			\draw[color=blue] (-5.9,6) node {$\lambda^+_{3}$};
			\draw[color=blue] (-5.9,10.47) node {$\lambda^+_{4}$};

			\draw[color=black] (0.4,0.95) node {$\lambda^+_{*,1}$};
			\draw[color=black] (-0.4,1.95) node {$\lambda^-_{*,1}$};
			\draw[color=black] (0.4,3.5) node {$\lambda^\pm_{*,2}$};
			\draw[color=black] (0.4,7.5) node {$\lambda^\pm_{*,4}$};
			\draw[color=black] (0.4,11.5) node {$\lambda^\pm_{*,6}$};
			\draw[color=black] (0.4,4.9) node {$\lambda^+_{*,3}$};
			\draw[color=black] (0.4,8.9) node {$\lambda^+_{*,5}$};
			\draw[color=black] (-0.4,5.9) node {$\lambda^-_{*,3}$};
			\draw[color=black] (-0.4,9.9) node {$\lambda^-_{*,5}$};
			
			\draw[color=blue] (7.6,2.52) node {$\lambda^-_{1}$};
			\draw[color=blue] (7.6,3.7) node {$\lambda^-_{2}$};
			\draw[color=blue] (7.6,7.49) node {$\lambda^-_{3}$};
			\draw[color=blue] (7.6,10.47) node {$\lambda^-_{4}$};
			
		\end{scriptsize}
	\end{tikzpicture}
	\caption{Bifurcation phenomena for \eqref{equilibria} case $q>p>2$ and $A(z^+)=A(z^-)$.} \label{bifphe-plessq-1}
\end{figure}

\bigskip

\begin{figure}
	\centering
	
	\begin{tikzpicture}[scale=0.6,rotate=-90]
		\clip(-6.26,-0.7) rectangle (8.0,14.21);
		
		\draw [color=blue][samples=50,rotate around={0:(-3,1)},xshift=-3cm,yshift=1cm,line width=2pt,domain=-4.347826086956522:-0.5)] plot (\x,{(\x)^2/2/0.2173913043478261});
		\draw [color=black][samples=50,rotate around={0:(-3,1)},xshift=-3cm,yshift=1cm,line width=2pt,domain=-0.5:4.347826086956522)] plot (\x,{(\x)^2/2/0.2173913043478261});

		\draw[color=blue] [samples=50,rotate around={0:(-3,3)},xshift=-3cm,yshift=3cm,line width=2pt,domain=-3.928571428571429:-0.5)] plot (\x,{(\x)^2/2/0.17857142857142858});
		\draw[color=black] [samples=50,rotate around={0:(-3,3)},xshift=-3cm,yshift=3cm,line width=2pt,domain=-0.5:3.928571428571429)] plot (\x,{(\x)^2/2/0.17857142857142858});
		
		\draw [color=blue][samples=50,rotate around={0:(-3,5)},xshift=-3cm,yshift=5cm,line width=2pt,domain=-3.25:-0.5)] plot (\x,{(\x)^2/2/0.125});
		\draw [color=black][samples=50,rotate around={0:(-3,5)},xshift=-3cm,yshift=5cm,line width=2pt,domain=-0.5:3.25)] plot (\x,{(\x)^2/2/0.125});

		\draw[color=blue] [samples=50,rotate around={0:(-3,7)},xshift=-3cm,yshift=7cm,line width=4.5pt,domain=-2.4285714285714284:-0.7)] plot (\x,{(\x)^2/2/0.07142857142857142});
		\draw[color=black] [samples=50,rotate around={0:(-3,7)},xshift=-3cm,yshift=7cm,line width=2pt,domain=-0.7:2.4285714285714284)] plot (\x,{(\x)^2/2/0.07142857142857142});

		\draw[color=black] [samples=50,rotate around={0:(-3,9)},xshift=-3cm,yshift=9cm,line width=2pt,domain=-1.8333333333333333:1.8333333333333333)] plot (\x,{(\x)^2/2/0.041666666666666664});
		
		\draw[color=black] [samples=50,rotate around={0:(-3,11)},xshift=-3cm,yshift=11cm,line width=2pt,domain=-1.4000000000000001:1.4000000000000001)] plot (\x,{(\x)^2/2/0.025});
		
		\draw[color=black] [samples=50,rotate around={0:(5,11)},xshift=5cm,yshift=11cm,line width=2pt,domain=-1.1:1.1)] plot (\x,{(\x)^2/2/0.025});
		
		\draw [samples=50,rotate around={0:(5,10)},xshift=5cm,yshift=10cm,line width=2pt,domain=-1.857142857142857:1.857142857142857)] plot (\x,{(\x)^2/2/0.07142857142857142});
		
		\draw[color=black] [samples=50,rotate around={0:(5,7)},xshift=5cm,yshift=7cm,line width=2pt,domain=-2.2:0.83)] plot (\x,{(\x)^2/2/0.1});
		\draw[color=blue] [samples=50,rotate around={0:(5,7)},xshift=5cm,yshift=7cm,line width=4.5pt,domain=0.83:2.2)] plot (\x,{(\x)^2/2/0.1});

		\draw [color=black][samples=50,rotate around={0:(5,6)},xshift=5cm,yshift=6cm,line width=2pt,domain=-3:0.7)] plot (\x,{(\x)^2/2/0.16666666666666666});
		\draw [color=blue][samples=50,rotate around={0:(5,6)},xshift=5cm,yshift=6cm,line width=4.5pt,domain=0.7:3)] plot (\x,{(\x)^2/2/0.16666666666666666});

		\draw [color=black][samples=50,rotate around={0:(5,3)},xshift=5cm,yshift=3cm,line width=2pt,domain=-2.857142857142857:0.5)] plot (\x,{(\x)^2/2/0.17857142857142858});
		\draw [color=blue][samples=50,rotate around={0:(5,3)},xshift=5cm,yshift=3cm,line width=2pt,domain=0.5:2.857142857142857)] plot (\x,{(\x)^2/2/0.17857142857142858});

		\draw[color=black] [samples=50,rotate around={0:(5,2)},xshift=5cm,yshift=2cm,line width=2pt,domain=-3.5:0.5)] plot (\x,{(\x)^2/2/0.25});
		\draw [color=blue] [samples=50,rotate around={0:(5,2)},xshift=5cm,yshift=2cm,line width=2pt,domain=0.5:3.5)] plot (\x,{(\x)^2/2/0.25});
		
		\draw [line width=2pt] (0,-0.45) -- (0,14.21);
		\draw [line width=1pt,domain=0.0:5.0] plot(\x,{(10-0*\x)/1});
		\draw [line width=1pt,domain=0.0:5.0] plot(\x,{(6-0*\x)/1});
		
		\draw [line width=1pt,domain=-3.0:0] plot(\x,{(9-0*\x)/1});
		\draw [line width=1pt,domain=-3.0:0] plot(\x,{(5-0*\x)/1});
		\draw [line width=1pt,domain=-3.0:5.0] plot(\x,{(11-0*\x)/1});
		\draw [line width=1pt,domain=-3.0:5.0] plot(\x,{(7-0*\x)/1});
		\draw [line width=1pt,domain=-3.0:5.0] plot(\x,{(3-0*\x)/1});
		\draw [line width=1pt,domain=0.0:5.0] plot(\x,{(2-0*\x)/1});
		\draw [line width=1pt,domain=-3.0:0] plot(\x,{(1-0*\x)/1});
		\draw [line width=2pt,domain=-6.26:8.0] plot(\x,{(-0-0*\x)/1});

		\draw [color=blue][line width=1pt] (0,1.54) -- (-5.6,1.54);
		\draw [color=blue][line width=1pt] (0,3.7) -- (-5.6,3.7);
		\draw [color=blue][line width=1pt] (0,6.) -- (-5.6,6.);
		\draw [color=blue][line width=1pt] (0,10.47) -- (-5.6,10.47);
		
		\draw [color=blue][line width=1pt] (0,2.52) -- (7.3,2.52);
		\draw [color=blue][line width=1pt] (0,3.7) -- (7.3,3.7);
		\draw [color=blue][line width=1pt] (0,7.49) -- (7.3,7.49);
		\draw [color=blue][line width=1pt] (0,10.47) -- (7.3,10.47);

		\begin{scriptsize}
			\draw[color=black] (-0.25,-0.25) node {$0$};
			\draw[color=black] (0.25,14.0) node {$\lambda$};
			\draw[color=black] (-5.9,-0.4) node {$E^+_k$};
			\draw[color=black] (7.7,-0.4) node {$E^-_k$};
			
			\draw[color=blue] (-5.9,1.54) node {$\lambda^+_{1}$};
			\draw[color=blue] (-5.9,3.7) node {$\lambda^+_{2}$};
			\draw[color=blue] (-5.9,6) node {$\lambda^+_{3}$};
			\draw[color=blue] (-5.9,10.47) node {$\lambda^+_{4}$};

			\draw[color=black] (0.4,0.95) node {$\lambda^+_{*,1}$};
			\draw[color=black] (-0.4,1.95) node {$\lambda^-_{*,1}$};
			\draw[color=black] (0.4,3.5) node {$\lambda^\pm_{*,2}$};
			\draw[color=black] (0.4,7.5) node {$\lambda^\pm_{*,4}$};
			\draw[color=black] (0.4,11.5) node {$\lambda^\pm_{*,6}$};
			\draw[color=black] (0.4,4.9) node {$\lambda^+_{*,3}$};
			\draw[color=black] (0.4,8.9) node {$\lambda^+_{*,5}$};
			\draw[color=black] (-0.4,5.9) node {$\lambda^-_{*,3}$};
			\draw[color=black] (-0.4,9.9) node {$\lambda^-_{*,5}$};
			
			\draw[color=blue] (7.6,2.52) node {$\lambda^-_{1}$};
			\draw[color=blue] (7.6,3.7) node {$\lambda^-_{2}$};
			\draw[color=blue] (7.6,7.49) node {$\lambda^-_{3}$};
			\draw[color=blue] (7.6,10.47) node {$\lambda^-_{4}$};
			
		\end{scriptsize}
	\end{tikzpicture}
	\caption{Bifurcation phenomena for \eqref{equilibria} case $q>p>2$ and $A(z^+)<A(z^-)$.} \label{bifphe-plessq-2}
\end{figure}

\bigskip

\begin{figure}
	\centering
	
	\begin{tikzpicture}[scale=0.6,rotate=-90]
		\clip(-6.26,-0.7) rectangle (8.0,14.21);
		
		\draw [color=blue][samples=50,rotate around={0:(-3,1)},xshift=-3cm,yshift=1cm,line width=2pt,domain=-4.347826086956522:-0.5)] plot (\x,{(\x)^2/2/0.2173913043478261});
		\draw [color=black][samples=50,rotate around={0:(-3,1)},xshift=-3cm,yshift=1cm,line width=2pt,domain=-0.5:4.347826086956522)] plot (\x,{(\x)^2/2/0.2173913043478261});

		\draw[color=blue] [samples=50,rotate around={0:(-3,3)},xshift=-3cm,yshift=3cm,line width=2pt,domain=-3.928571428571429:-0.5)] plot (\x,{(\x)^2/2/0.17857142857142858});
		\draw[color=black] [samples=50,rotate around={0:(-3,3)},xshift=-3cm,yshift=3cm,line width=2pt,domain=-0.5:3.928571428571429)] plot (\x,{(\x)^2/2/0.17857142857142858});
		
		\draw [color=blue][samples=50,rotate around={0:(-3,5)},xshift=-3cm,yshift=5cm,line width=4pt,domain=-3.25:-0.5)] plot (\x,{(\x)^2/2/0.125});
		\draw [color=black][samples=50,rotate around={0:(-3,5)},xshift=-3cm,yshift=5cm,line width=2pt,domain=-0.5:3.25)] plot (\x,{(\x)^2/2/0.125});

		\draw[color=blue] [samples=50,rotate around={0:(-3,7)},xshift=-3cm,yshift=7cm,line width=4.5pt,domain=-2.4285714285714284:-0.7)] plot (\x,{(\x)^2/2/0.07142857142857142});
		\draw[color=black] [samples=50,rotate around={0:(-3,7)},xshift=-3cm,yshift=7cm,line width=2pt,domain=-0.7:2.4285714285714284)] plot (\x,{(\x)^2/2/0.07142857142857142});

		\draw[color=black] [samples=50,rotate around={0:(-3,9)},xshift=-3cm,yshift=9cm,line width=2pt,domain=-1.8333333333333333:1.8333333333333333)] plot (\x,{(\x)^2/2/0.041666666666666664});
		
		\draw[color=black] [samples=50,rotate around={0:(-3,11)},xshift=-3cm,yshift=11cm,line width=2pt,domain=-1.4000000000000001:1.4000000000000001)] plot (\x,{(\x)^2/2/0.025});
		
		\draw[color=black] [samples=50,rotate around={0:(5,11)},xshift=5cm,yshift=11cm,line width=2pt,domain=-1.1:1.1)] plot (\x,{(\x)^2/2/0.025});
		
		\draw [samples=50,rotate around={0:(5,10)},xshift=5cm,yshift=10cm,line width=2pt,domain=-1.857142857142857:1.857142857142857)] plot (\x,{(\x)^2/2/0.07142857142857142});
		
		\draw[color=black] [samples=50,rotate around={0:(5,7)},xshift=5cm,yshift=7cm,line width=2pt,domain=-2.2:0.83)] plot (\x,{(\x)^2/2/0.1});
		\draw[color=blue] [samples=50,rotate around={0:(5,7)},xshift=5cm,yshift=7cm,line width=4.5pt,domain=0.83:2.2)] plot (\x,{(\x)^2/2/0.1});

		\draw [color=black][samples=50,rotate around={0:(5,6)},xshift=5cm,yshift=6cm,line width=2pt,domain=-3:0.7)] plot (\x,{(\x)^2/2/0.16666666666666666});
		\draw [color=blue][samples=50,rotate around={0:(5,6)},xshift=5cm,yshift=6cm,line width=2pt,domain=0.7:3)] plot (\x,{(\x)^2/2/0.16666666666666666});

		\draw [color=black][samples=50,rotate around={0:(5,3)},xshift=5cm,yshift=3cm,line width=2pt,domain=-2.857142857142857:0.5)] plot (\x,{(\x)^2/2/0.17857142857142858});
		\draw [color=blue][samples=50,rotate around={0:(5,3)},xshift=5cm,yshift=3cm,line width=2pt,domain=0.5:2.857142857142857)] plot (\x,{(\x)^2/2/0.17857142857142858});

		\draw[color=black] [samples=50,rotate around={0:(5,2)},xshift=5cm,yshift=2cm,line width=2pt,domain=-3.5:0.5)] plot (\x,{(\x)^2/2/0.25});
		\draw [color=blue] [samples=50,rotate around={0:(5,2)},xshift=5cm,yshift=2cm,line width=2pt,domain=0.5:3.5)] plot (\x,{(\x)^2/2/0.25});
		
		\draw [line width=2pt] (0,-0.45) -- (0,14.21);
		\draw [line width=1pt,domain=0.0:5.0] plot(\x,{(10-0*\x)/1});
		\draw [line width=1pt,domain=0.0:5.0] plot(\x,{(6-0*\x)/1});
		
		\draw [line width=1pt,domain=-3.0:0] plot(\x,{(9-0*\x)/1});
		\draw [line width=1pt,domain=-3.0:0] plot(\x,{(5-0*\x)/1});
		\draw [line width=1pt,domain=-3.0:5.0] plot(\x,{(11-0*\x)/1});
		\draw [line width=1pt,domain=-3.0:5.0] plot(\x,{(7-0*\x)/1});
		\draw [line width=1pt,domain=-3.0:5.0] plot(\x,{(3-0*\x)/1});
		\draw [line width=1pt,domain=0.0:5.0] plot(\x,{(2-0*\x)/1});
		\draw [line width=1pt,domain=-3.0:0] plot(\x,{(1-0*\x)/1});
		\draw [line width=2pt,domain=-6.26:8.0] plot(\x,{(-0-0*\x)/1});

		\draw [color=blue][line width=1pt] (0,1.54) -- (-5.6,1.54);
		\draw [color=blue][line width=1pt] (0,3.7) -- (-5.6,3.7);
		\draw [color=blue][line width=1pt] (0,6.) -- (-5.6,6.);
		\draw [color=blue][line width=1pt] (0,10.47) -- (-5.6,10.47);
		
		\draw [color=blue][line width=1pt] (0,2.52) -- (7.3,2.52);
		\draw [color=blue][line width=1pt] (0,3.7) -- (7.3,3.7);
		\draw [color=blue][line width=1pt] (0,7.49) -- (7.3,7.49);
		\draw [color=blue][line width=1pt] (0,10.47) -- (7.3,10.47);

		\begin{scriptsize}
			\draw[color=black] (-0.25,-0.25) node {$0$};
			\draw[color=black] (0.25,14.0) node {$\lambda$};
			\draw[color=black] (-5.9,-0.4) node {$E^+_k$};
			\draw[color=black] (7.7,-0.4) node {$E^-_k$};
			
			\draw[color=blue] (-5.9,1.54) node {$\lambda^+_{1}$};
			\draw[color=blue] (-5.9,3.7) node {$\lambda^+_{2}$};
			\draw[color=blue] (-5.9,6) node {$\lambda^+_{3}$};
			\draw[color=blue] (-5.9,10.47) node {$\lambda^+_{4}$};

			\draw[color=black] (0.4,0.95) node {$\lambda^+_{*,1}$};
			\draw[color=black] (-0.4,1.95) node {$\lambda^-_{*,1}$};
			\draw[color=black] (0.4,3.5) node {$\lambda^\pm_{*,2}$};
			\draw[color=black] (0.4,7.5) node {$\lambda^\pm_{*,4}$};
			\draw[color=black] (0.4,11.5) node {$\lambda^\pm_{*,6}$};
			\draw[color=black] (0.4,4.9) node {$\lambda^+_{*,3}$};
			\draw[color=black] (0.4,8.9) node {$\lambda^+_{*,5}$};
			\draw[color=black] (-0.4,5.9) node {$\lambda^-_{*,3}$};
			\draw[color=black] (-0.4,9.9) node {$\lambda^-_{*,5}$};
			
			\draw[color=blue] (7.6,2.52) node {$\lambda^-_{1}$};
			\draw[color=blue] (7.6,3.7) node {$\lambda^-_{2}$};
			\draw[color=blue] (7.6,7.49) node {$\lambda^-_{3}$};
			\draw[color=blue] (7.6,10.47) node {$\lambda^-_{4}$};
			
		\end{scriptsize}
	\end{tikzpicture}
	\caption{Bifurcation phenomena for \eqref{equilibria} case $q>p>2$ and $A(z^+)>A(z^-)$.} \label{bifphe-plessq-3}
\end{figure}

\bigskip

\begin{figure}
	\centering
	
	\begin{tikzpicture}[scale=0.6]
		\clip(-0.8,-6.0) rectangle (20.0,6.0);
		\draw [samples=50,rotate around={-90:(1,0)},xshift=1cm,yshift=0cm,line width=2.4pt,color=blue,domain=-3.92:-1.03)] plot (\x,{(\x)^2/2/0.5});
		\draw [samples=50,rotate around={-90:(1,0)},xshift=1cm,yshift=0cm,line width=2.4pt,domain=-1.03:1.2)] plot (\x,{(\x)^2/2/0.5});
		\draw [samples=50,rotate around={-90:(1,0)},xshift=1cm,yshift=0cm,line width=2.4pt,color=blue,domain=1.2:3.92)] plot (\x,{(\x)^2/2/0.5});

		\draw [samples=50,rotate around={-90:(3,0)},xshift=3cm,yshift=0cm,line width=5.0pt,color=blue,domain=-3:-1.15)] plot (\x,{(\x)^2/2/0.3333333333333333});
		\draw [samples=50,rotate around={-90:(3,0)},xshift=3cm,yshift=0cm,line width=2.4pt,domain=-1.15:1.15)] plot (\x,{(\x)^2/2/0.3333333333333333});
		\draw [samples=50,rotate around={-90:(3,0)},xshift=3cm,yshift=0cm,line width=5.0pt,color=blue,domain=1.15:3)] plot (\x,{(\x)^2/2/0.3333333333333333});
		
		\draw [samples=50,rotate around={-90:(6,0)},xshift=6cm,yshift=0cm,line width=5.0pt,color=blue,domain=-2.3:-1.42)] plot (\x,{(\x)^2/2/0.25});
		\draw [samples=50,rotate around={-90:(6,0)},xshift=6cm,yshift=0cm,line width=2.4pt,domain=-1.42:1.0)] plot (\x,{(\x)^2/2/0.25});
		\draw [samples=50,rotate around={-90:(6,0)},xshift=6cm,yshift=0cm,line width=5.0pt,color=blue,domain=1.0:2.3)] plot (\x,{(\x)^2/2/0.25});
		
		\draw [samples=50,rotate around={-90:(9,0)},xshift=9cm,yshift=0cm,line width=5.0pt,color=blue,domain=-1.6:-1.42)] plot (\x,{(\x)^2/2/0.16666666666666666});
		\draw [samples=50,rotate around={-90:(9,0)},xshift=9cm,yshift=0cm,line width=2.4pt,domain=-1.42:1.42)] plot (\x,{(\x)^2/2/0.16666666666666666});
		\draw [samples=50,rotate around={-90:(9,0)},xshift=9cm,yshift=0cm,line width=5.0pt,color=blue,domain=1.42:1.6)] plot (\x,{(\x)^2/2/0.16666666666666666});
		
		\draw [samples=50,rotate around={-90:(12,0)},xshift=12cm,yshift=0cm,line width=2.4pt,domain=-0.97:0.97)] plot (\x,{(\x)^2/2/0.1});
		
		\draw [line width=1pt] (2,0) -- (2,5.5);		
		\draw [line width=1pt] (5,0) -- (5,5.5);
		\draw [line width=1pt] (10,0) -- (10,5.5);
		\draw [line width=1pt] (15,0) -- (15,5.5);
		
		\draw [line width=1pt] (2.37,-5.5) -- (2.37,0);		
		\draw [line width=1pt] (5,-5.5) -- (5,0);
		\draw [line width=1pt] (8.05,-5.5) -- (8.05,0);
		\draw [line width=1pt] (15,-5.5) -- (15,0);
		
		\draw [line width=2.4pt,domain=-0.3:16.5] plot(\x,{(-0-0*\x)/1});
		\draw [line width=2.4pt] (0,-6.0) -- (0,6.0);
		\begin{scriptsize}
			\draw[color=black] (-0.4,-0.3) node {$0$};
			\draw[color=black] (16.4,-0.3) node {$\lambda$};
			\draw[color=black] (-0.5,5.5) node {$E^+_k$};
			\draw[color=black] (-0.5,-5.5) node {$E^-_k$};
			
			\draw[color=black] (0.8,-0.3) node {$\lambda_1$};
			\draw[color=black] (2.8,-0.3) node {$\lambda_2$};
			\draw[color=black] (5.8,-0.3) node {$\lambda_3$};
			\draw[color=black] (8.8,-0.3) node {$\lambda_4$};
			\draw[color=black] (11.8,-0.3) node {$\lambda_5$};
			
			\draw[color=black] (2.9,5.0) node {$\lambda=\lambda^+_1$};
			\draw[color=black] (5.9,5.0) node {$\lambda=\lambda^+_2$};
			\draw[color=black] (10.9,5.0) node {$\lambda=\lambda^+_3$};
			\draw[color=black] (15.9,5.0) node {$\lambda=\lambda^+_4$};
			
			\draw[color=black] (3.2,-5.0) node {$\lambda=\lambda^-_1$};
			\draw[color=black] (5.9,-5.0) node {$\lambda=\lambda^-_2$};
			\draw[color=black] (8.9,-5.0) node {$\lambda=\lambda^-_3$};
			\draw[color=black] (15.9,-5.0) node {$\lambda=\lambda^-_4$};
			
		\end{scriptsize}
	\end{tikzpicture}
	\caption{Bifurcation sequences  for \eqref{equilibria} with $2<q=p$ and $A(z^+)=A(z^-)$.
		}
	\label{bifphe-qequalp}
\end{figure}
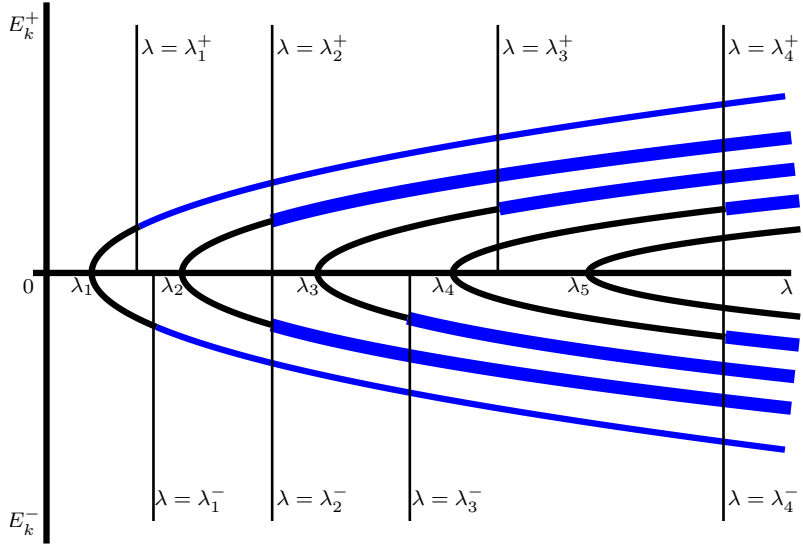

\bigskip
As an extension of the results of Theorem \ref{theorem-gv} (from $f$ odd to $f$ not necessarily odd) we have obtained the structure of  solutions of \eqref{equilibria}. 

\begin{itemize}

\item For $1<p\leqslant2$ and $q=p$ the structure of bifurcations is the same as that of Theorem \ref{theorem-gv}. Now, for $p>q$ and for each $\lambda>0$, the solution set of \eqref{equilibria} is formed by a countable (infinite) number solutions (no flat cores).

%\item Second for $p>2$, the structure of solutions  set is formed by continuous set for each $\lambda>0$ if $q=p$ and for $\lambda\in(\lambda_n,\lambda_{n+1}]$ if  $p>q$.

\item When $\lambda\geq \lambda_1$, $p>2$ and $p=q$, the set of solutions is formed by a finite number of connected components (each of them with a single equilibria or a continuous set of equilibria). When $\lambda>0$, $p>2$ and $p>q$ the set of solutions is formed by a countable (infinite) set of connected components (each of them with a single equilibria or a continuous set of equilibria).

\end{itemize}

\bigskip

 We observe that an interesting phenomena  occur, namely, the positive and negative solutions do not necessarily appear at the same value of the parameter (different from what is observed when $f$ is odd, \cite{Guedda-Veron}). Solutions belonging to $S^+_{2k+1}$ and $S^-_{2k+1}$ also do not necessarily appear at the same parameter value, however the  solutions that belong $S^\pm_{2k}$ appear at the same value of the parameter (similarly to what happens when $f$ odd, \cite{Guedda-Veron}), solutions with flat cores occur for the degenerate case (that is, $p>2$).

\bigskip

We also study the shape of the solutions of \eqref{equilibria}. Due to the fact that $f$ is not necessarily odd, as \cite{TAYA,Guedda-Veron}, the solutions do not satisfy the Chaffe-Infante's symmetry. 

For $p>2$ existence of solutions with flat cores will depend on the parameter $\lambda$ but their location may also on the relation between the areas $A(z^+)$ and  $A(z^-)$. That influences the shape of the solutions, which we briefly comment next. If $A(z^+)<A(z^-)$ then the flat cores are are located in the positive part of the solution only; if $A(z^+)>A(z^-)$ the flat cores are located in the negative part of the solution only; if $A(z^+)=A(z^-)$ flat cores are distributed between the positive and negative parts of the solution (this is the case obtained in \cite{TAYA,Guedda-Veron} all other cases are new in the literature).

\bigskip

Our results obtained for $f$ which is not odd are new in the literature that has the following interesting facts:
\begin{itemize}
	\item The positive and negative solutions of \eqref{equilibria} no longer appear together.
	\item The negative solution of \eqref{equilibria} is no longer the negative of the positive function.
	\item The positive and negative solutions still have the half interval symmetry.
	\item There are two positive (negative) solutions since $q>p$
	\item Oscillating solutions do not have the half interval symmetry.
	\item The location of the Flat Cores depend on the the relation between the areas $A(z^+)$ and $A(z^-)$.
\end{itemize}

\bigskip

The Time-map method used in \cite{Bcv,Carvalho et al.,CLR-PAMS,CH-IN,Chaf-Inf,Ha,HE,Guedda-Veron,LICALUMO,TAYA}, based on the analysis of the phase diagram associated with the elliptic problem, is commonly used in the literature to determine bifurcation sequences. This method is improved by Takeuchi and Yamada in \cite{TAYA} by reducing the time-map method to the study of the behavior of the integral functions $I$ and $J$ defined in \eqref{I} and \eqref{J} respectively. The shape of the function $I$ and $J$ is determined according to the relationship between the numbers $q$ and $p$ (which is why we separate the study to these three cases). In the case when $f$ is not odd it is still possible to adapt this time-map method to study the solutions of \eqref{equilibria}. The strategy in this case is to separate the solutions that start with a positive slope at $0$ from those that begin with a negative slope at $0$.

\bigskip

This paper is organized as follows. In Section \ref{S-1} we construct explicitly the solution to equation \eqref{equilibria}. Section \ref{S-pos-neg} is dedicated to study of existence of positive and negative solutions. In Section \ref{S-even} we determine the structure of  solutions that live in $S^\pm_{2j}$. In Section \ref{S-odd} we determine the structure of  solutions that live in $S^\pm_{2j+1}$.  And finally in Section \ref{S-reg} we give a optimal regularity result for the solutions of \eqref{equilibria} (including, in some particular situations, $C^2$ regularity at points with zero derivative).

\section{The construction of the solutions}\label{S-1}

This section is dedicated to the construction of the solutions of \eqref{equilibria}. From \eqref{equilibria}, it is clear that a solution of \eqref{equilibria} must satisfy
$$
\int_0^1 |\phi_x|^p dx = \lambda \left(\int_0^1 |\phi|^q dx-\int_0^1 \phi f(\phi) dx\right).
$$
A non-trivial solution for \eqref{equilibria} only exist if (see \cite{otani})
$$
\lambda>\inf_{u\neq 0,u\in W^{1,p}_0(0,1)}\frac{\int^1_0|u_x(x)|^pdx}{\int^1_0|u(x)|^qdx}.
$$ 

\bigskip

Note that, multiplying the first equation in \eqref{equilibria} by $\phi_x$ and integrating from $a$ to $x\in (a,1)$ we obtain
\begin{equation}\label{energy3}
|\phi_x(x)|^p=|\phi_x(a)|^p+\frac{\lambda p}{(p-1)q} \left( qF(\phi(x))-qF(\phi(a))-|\phi(x)|^q+|\phi(a)|^q\right).
\end{equation}
Now, if $a=0$ and $\phi(0)=\phi_x(0)=0$ we have that
$$
|\phi_x(x)|^p=\frac{\lambda p}{(p-1)q}( qF(\phi(x))-|\phi(x)|^q)
$$
and since $t\mapsto qF(t)-|t|^q$ is negative for $0<|t|<\rho$, for some $\rho>0$ we must have that $\phi_x$ is identically zero and that $\phi\equiv 0$.

\bigskip

Now we start the construction of the solution $\phi\in S^+_j$ (see \eqref{set-zeros} for the definition of $S^+_j$)  of \eqref{equilibria}, for that we assume that $\phi_x(0)=r>0$ and $\phi(0)=0$ in the energy relation \eqref{energy3}. It follows that $\phi$ is increasing in some interval $[0,x_0]$ and from \eqref{energy3} we have
$$
\phi_x(x) = \left(r^p+\frac{\lambda p}{(p-1)q}(qF(\phi(x))-\phi(x)^q)\right)^\frac{1}{p}.
$$
Hence, for $x\in [0,x_0]$,
$$
x=\int_0^{\phi(x)}  \left(r^p+\frac{\lambda p}{(p-1)q}(qF(t)-t^q)\right)^{-\frac{1}{p}}dt.
$$
Moreover this formula remains valid as long as $\phi(x)$ is smaller than the first positive zero of the function $s\mapsto \psi(r,s)=r^p+\frac{\lambda p}{(p-1)q}(qF(s)-s^q)$. Now
\begin{equation*}
\frac{d}{ds}\psi(r,s)=\frac{\lambda p}{(p-1)}(f(s)-s^{q-1})\left\{
\begin{split} 
&\!<0,\ \hbox{ for } s\in (0,z^+),\\
%&\!>0,\ \hbox{ for } s\in (z^+,\infty)\hbox{ and }\\
&\!=0,\ \hbox{ for } s=z^+. \\
\end{split}\right.
\end{equation*}

\bigskip

If $r>r(\lambda)$ (see \eqref{max-inclination-positive} for the definition of $r(\lambda)$)
$$
\phi_x(x) = \left(r^p+\frac{\lambda p}{(p-1)q}(qF(\phi(x))-\phi(x)^q)\right)^\frac{1}{p}\geqslant \left(r^p-r(\lambda)^p\right)^\frac{1}{p}>0
$$
for all $x\in (0,\infty)$ and $\phi$ is not a solution for \eqref{equilibria}. If $r= r(\lambda)$, then $\psi(r,s)=r^p+\frac{\lambda p}{(p-1)q}(qF(s)-s^q)$ has a double zero in $z^+$. Consequently, if $1<p\leqslant 2$,
$$
x(s)=\int_0^{s}  \left(r(\lambda)^p+\frac{\lambda p}{(p-1)q}(qF(t)-t^q)\right)^{-\frac{1}{p}}dt\stackrel{s\to z^+}{\longrightarrow} +\infty
$$
and $0<\phi_x(x(s))\stackrel{s\to z^+}{\longrightarrow}0$. As a consequence of that $\phi$ is not a solution for \eqref{equilibria}. Now, if $2<p<\infty$, then
\begin{equation}\label{xlambda}
x(\lambda):=\int_0^{z^+}  \left(r(\lambda)^p+\frac{\lambda p}{(p-1)q}(qF(t)-t^q)\right)^{-\frac{1}{p}}dt<\infty,
\end{equation}
$\phi(x(\lambda))=z^+$ and $\phi_x(z^+)=0$. 

\bigskip

It remains to consider the case $r< r(\lambda)$. In this case $s\mapsto \psi(r,s)$ admits a simple zero $z(r)$ in $(0,z^+)$ given by
\begin{equation}\label{zero}
r^p +\frac{\lambda p}{(p-1)q} \left( qF(z(r))-z(r)^q\right)=0.
\end{equation}
Let
\begin{equation}\label{theta}
\theta(r)=\int_0^{z(r)} (\psi(r,t))^{-\frac{1}{p}}dt
\end{equation}
and $\phi(\theta(r))=z(r)$, $\phi_x(\theta(r))=(\psi(r,\phi(\theta(r))))^{\frac{1}{p}}=(\psi(r,z(r)))^{\frac{1}{p}}=0$. From \eqref{energy3} for $a=\theta(r)$ and $x\in [\theta(r),\Theta] $ for some $\Theta>\theta(r)$
\begin{equation*}
|\phi_x(x)|^p =\frac{\lambda p}{(p-1)q} \left( qF(\phi(x))-qF(z(r))-|\phi(x)|^q+z(r)^q\right)
\end{equation*}
and from \eqref{zero}
\begin{equation*}
|\phi_x(x)|^p =r^p+\frac{\lambda p}{(p-1)q} \left( qF(\phi(x))-|\phi(x)|^q\right)
\end{equation*}
and $\phi$ is decreasing in $[\theta(r),\Theta]$, from which we have
\begin{equation*}
\begin{split}
\phi_x(x) =&-\left(r^p+\frac{\lambda p}{(p-1)q} \left( qF(\phi(x))-|\phi(x)|^q\right)\right)^\frac{1}{p}dt.
\end{split}
\end{equation*}
Therefore, 
$$
x-\theta(r)=-\int_{z(r)}^{\phi(x)} \left(r^p+\frac{\lambda p}{(p-1)q} \left( qF(t)-|t|^q\right)\right)^{-\frac{1}{p}}dt
$$
this formula will holds for as long as $\phi$ is decreasing. In particular, the formula above holds as long as $\phi>0$ and, in that case,
$$
x-\theta(r)=-\int_{z(r)}^{\phi(x)} \left(r^p+\frac{\lambda p}{(p-1)q} \left( qF(t)-t^q\right)\right)^{-\frac{1}{p}}dt.
$$
Hence, for $x_1\in (0,\theta(r))$, making $x_2=2 \theta(r)-x_1$, we have that $x_2-\theta(r)=\theta(r)-x_1$,
\begin{equation*}
\begin{split}
x_2-\theta(r)&=-\int_{z(r)}^{\phi(x_2)} \left(r^p+\frac{\lambda p}{(p-1)q} \left( qF(t)-t^q\right)\right)^{-\frac{1}{p}}dt\\
&=\theta(r)-x_1=
\int_{\phi(x_1)}^{z(r)}  \left(r^p+\frac{\lambda p}{(p-1)q} \left( qF(t)-t^q\right)\right)^{-\frac{1}{p}}dt
\end{split}
\end{equation*}
and $\phi(x_1)=\phi(x_2)$. Therefore, $\phi(\theta(r)-x)=\phi(\theta(r)+x)$, $x\in (0,\theta(r))$. Now $\phi(2\theta(r))=\phi(0)=0$ and $\phi_x(2\theta(r))=-\phi_x(0)=-r<0$. Consequently,  $\phi$ changes sign in $x=2\theta(r)$, is decreasing  in some interval $[2\theta(r),y_0]$ and $\phi_x$ will vanish again. From \eqref{energy3} with $a=2\theta(r)$ 
$$
\phi_x(x) = -\left(r^p+\frac{\lambda p}{(p-1)q}(qF(\phi(x))-|\phi(x)|^q)\right)^\frac{1}{p}
$$
then, for $x\in [2\theta(r),y_0]$,
$$
x-2\theta(r)=\int^0_{\phi(x)}  \left(r^p+\frac{\lambda p}{(p-1)q}(qF(t)-|t|^q)\right)^{-\frac{1}{p}}dt.
$$
Moreover this formula remains valid as long as $\phi(x)$ is smaller than the first negative zero of the function $s\mapsto \psi^-(r,s)=r^p+\frac{\lambda p}{(p-1)q}(qF(s)-|s|^q)$. Now
\begin{equation*}
\frac{d}{ds}\psi^-(r,s)=\frac{\lambda p}{(p-1)}(f(s)+|s|^{q-1})\left\{
\begin{split} 
&\!>0,\ \hbox{ for } s\in (z^-,0),\\
%&\!<0,\ \hbox{ for } s\in (-\infty,z^-)\hbox{ and }\\
&\!=0,\ \hbox{ for } s=z^-. \\
\end{split}\right.
\end{equation*}

\bigskip

 If $r> r^-(\lambda)$ (see  \eqref{max-inclination-negative} for the definition of $r^-(\lambda)$)
$$
\phi_x(x) = -\left(r^p+\frac{\lambda p}{(p-1)q}(qF(\phi(x))-|\phi(x)|^q)\right)^\frac{1}{p}< -\left(r^p-r^-(\lambda)^p\right)^\frac{1}{p}<0.
$$
for all $x\in (0,\infty)$ and $\phi$ is not a solution for \eqref{equilibria}. If $r= r^-(\lambda)$, then $\psi^-(r,s)=r^p+\frac{\lambda p}{(p-1)q}(qF(s)-|s|^q)$ has a double zero in $z^-$. Consequently, if $1<p\leqslant 2$,
$$
y(s)=\int^0_{s}  \left(r(\lambda)^p+\frac{\lambda p}{(p-1)q}(qF(t)-|t|^q)\right)^{-\frac{1}{p}}dt\stackrel{s\to z^-}{\longrightarrow} +\infty
$$
and $0<\phi_x(y(s))\stackrel{s\to z^-}{\longrightarrow}0$. As a consequence of that $\phi$ is not a solution for \eqref{equilibria}. Now, if $2<p<\infty$, then
\begin{equation}\label{xlambda}
y(\lambda):=\int^0_{z^-}  \left(r^-(\lambda)^p+\frac{\lambda p}{(p-1)q}(qF(t)-|t|^q)\right)^{-\frac{1}{p}}dt<\infty. 
\end{equation}
and $\phi(y(\lambda))=z^-$ and $\phi_x(z^-)=0$. 

\bigskip

Again it remains to consider the case $r< r^-(\lambda)$. In this case $s\mapsto \psi^-(r,s)$ admits a simple zero $S(r)$ in $(z^-,0)$ given by

\begin{equation}\label{negativezero}
r^p +\frac{\lambda p}{(p-1)q} \left( qF(S(r))-|S(r)|^q\right)=0.
\end{equation}
Let
\begin{equation}\label{alpha}
\alpha(r)=\int^0_{S(r)} (\psi^-(r,t))^{-\frac{1}{p}}dt
\end{equation}
and $\phi(2\theta(r)+\alpha(r))=S(r)$, $\phi_x(2\theta(r)+\alpha(r))=-(\psi^-(r,\phi(2\theta(r)+\alpha(r))))^{\frac{1}{p}}=-(\psi^-(r,S(r)))^{\frac{1}{p}}=0$. Using the energy relation \eqref{energy3} for $a=2\theta(r)+\alpha(r)$ and $x\in [2\theta(r)+\alpha(r)+,\Theta] $, for some $\Theta>2\theta(r)+\alpha(r)$,
\begin{equation*}
|\phi_x(x)|^p =\frac{\lambda p}{(p-1)q} \left( qF(\phi(x))-qF(S(r))-|\phi(x)|^q+|S(r)|^q\right).
\end{equation*}
Now, from \eqref{negativezero},
\begin{equation*}
|\phi_x(x)|^p =r^p+\frac{\lambda p}{(p-1)q} \left( qF(\phi(x))-|\phi(x)|^q\right)
\end{equation*}
and $\phi$ is increasing in $[2\theta(r)+\alpha(r),\Theta]$. From this we have 
\begin{equation*}
\begin{split}
\phi_x(x) =&\left(r^p+\frac{\lambda p}{(p-1)q} \left( qF(\phi(x))-|\phi(x)|^q\right)\right)^\frac{1}{p}.
\end{split}
\end{equation*}
Therefore, 
$$
x-2\theta(r)-\alpha(r)=\int_{S(r)}^{\phi(x)} \left(r^p+\frac{\lambda p}{(p-1)q} \left( qF(t)-|t|^q\right)\right)^{-\frac{1}{p}}dt
$$
this last formula will holds for as long as $\phi$ is increasing. In particular, the formula above holds as long as $\phi<0$ and hence, for $x_3\in (2\theta(r),2\theta(r)+\alpha(r))$, making $x_4=2\theta(r)+2\alpha(r)-x_3$, we have that $x_4-2\theta(r)-\alpha(r)=2\theta(r)+\alpha(r)-x_3$ and
\begin{equation*}
	\begin{split}
		\int_{S(r)}^{\phi(x_3)} \left(\psi^-(r,t)\right)^{-\frac{1}{p}}dt =&\int_{0}^{\phi(x_3)} \left(\psi^-(r,t)\right)^{-\frac{1}{p}}dt+\int_{S(r)}^{0} \left(\psi^-(r,t)\right)^{-\frac{1}{p}}dt\\
		=&-\int^{0}_{\phi(x_3)} \left(\psi^-(r,t)\right)^{-\frac{1}{p}}dt+\int_{S(r)}^{0} \left(\psi^-(r,t)\right)^{-\frac{1}{p}}dt\\
		=&-(x_3-2\theta(r))+\alpha(r)\\
		=&x_4-2\theta(r)-\alpha(r)\\
		=&\int_{S(r)}^{\phi(x_4)} \left(\psi^-(r,t)\right)^{-\frac{1}{p}}dt
			\end{split}
\end{equation*}
and $\phi(x_3)=\phi(x_4)$. Therefore, $\phi(2\theta(r)+\alpha(r)-x)=\phi(2\theta(r)+\alpha(r)+x)$, $x\in (0,\alpha(r))$. Now $\phi(2\theta(r)+2\alpha(r))=\phi(2\theta(r))=0$ and $\phi_x(2\theta(r)+2\alpha(r))=-\phi_x(2\theta(r))=r$ and we may proceed with this analysis and to conclude the following necessary and sufficient conditions 
\begin{itemize}
\item $\phi\in S^+_1$ is a solution of \eqref{equilibria} if and only if $2\theta(r)=1$,
\item $\phi\in S^+_{2j-1}$ is a solution of  \eqref{equilibria} if and only if $2j\theta(r)+2(j-1)\alpha(r)=1$, for $j=2,\cdots$
\item $\phi\in S^+_{2j}$ is a solution of  \eqref{equilibria} if and only if  $2j(\theta(r)+\alpha(r))=1$, for $j=1,2,\cdots$
\end{itemize}
similarly we conclude that the necessary and sufficient condition for $\phi\in S^-_j$ to be a
\begin{itemize}
	\item $\phi\in S^-_1$ is a solution of \eqref{equilibria} if and only if $2\alpha(r)=1$,
	\item $\phi\in S^-_{2j-1}$ is a solution of  \eqref{equilibria} if and only if $2j\alpha(r)+2(j-1)\theta(r)=1$, for $j=2,3\cdots,$
	\item $\phi\in S^-_{2j}$ is a solution of  \eqref{equilibria} if and only if  $2j(\alpha(r)+\theta(r))=1$, for $j=1,2,\cdots$.
\end{itemize}
  
From this, it becomes clear that we must study the image of the functions 
\begin{itemize}
	\item $\theta:(0,r(\lambda))\to \R^+$ to determine the positive solution of \eqref{equilibria},
	\item $\alpha:(0,r^-(\lambda))\to \R^+$ to determine the negative solution of \eqref{equilibria} and
	\item $\theta, \alpha:(0,r^*(\lambda)=\min\{r(\lambda),r^-(\lambda)\})\to \R^+$ to determine the others solutions of \eqref{equilibria}.
\end{itemize}

\section{Positive and negative solutions}\label{S-pos-neg}
	
From the considerations in the previous section, to ensure the existence of positive and negative solutions of \eqref{equilibria} we need study when $1\in Im\left(2\theta\right)$ on  $(0,r(\lambda))$ and $1\in Im\left(2\alpha\right)$ on $(0,r^-(\lambda))$ respectively. We begin with the positive one, first note that the function $z:(0,r(\lambda))\to (0,z^+)$ is given by \eqref{zero}. Now, differentiating implicitly \eqref{zero} with respect to $r$, we obtain 
\begin{equation}\label{zderivate}
z'(r)=\frac{ (p-1) r^{p-1}}{ \lambda \left((z(r))^{q-1}-f(z(r))\right)}>0, \ z(r)\in (0,z^+)
\end{equation}
that implies  $z$ is strictly increasing on $(0,r(\lambda))$.
 
\bigskip

From  \eqref{zero} and \eqref{theta} we obtain  
\begin{equation}\label{thetaI}
\theta(r)=\left(\frac{p-1}{\lambda p}\right)^{\frac{1}{p}}\int_0^{z(r)}\left(F(t)-F(z(r))+\frac{z(r)^q-t^q}{q}\right)^{-\frac{1}{p}}dt=\left(\frac{p-1}{\lambda p}\right)^{\frac{1}{p}}I(z(r)),
\end{equation}
where the function $I$ is defined by
\begin{equation}\label{I}
I(a)=\int_0^{a}\left(F(t)-F(a)+\frac{a^q-t^q}{q}\right)^{-\frac{1}{p}}dt
\end{equation}
for all $a\in(0,z^+)$.
\begin{lemma}\label{function_I} 
The function $I(\cdot)$ has the following properties
\begin{enumerate}
\item[i)] $I$ is continuous in $(0,z^+)$
\item[ii)]  \begin{equation*}
\lim_{a\rightarrow 0^+} I(a)=
\begin{cases} 
0,&\ \hbox{ if } q<p,\\
p^{\frac{1}{p}}\int^1_0(1-y^p)^{-\frac{1}{p}}dy,&\ \hbox{ if } q=p,\\
+\infty,&\ \hbox{ if } q>p
\end{cases}
\end{equation*}
\item[iii)]    
\begin{equation*}
	I(z^+)=\begin{cases} 
		+\infty,&\ \hbox{ if } 1<p\leqslant2,\\
		<+\infty,&\ \hbox{ if } p>2.
	\end{cases}
\end{equation*}
\item[iv)]
\begin{equation*}
\begin{cases} 
\text{$I$ is increasing in $(0,z^+)$,}&\ \hbox{ if } q\leqslant p,\\
\text{$I(a)\geqslant I(a_*)>0$ for all $a\in(0,z^+)$ for some $a_*\in(0,z^+)$,}&\ \hbox{ if } q>p.
\end{cases}
\end{equation*}
\end{enumerate}
\end{lemma}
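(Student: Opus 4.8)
The plan is to reduce the integrand to a single monotone ``shape factor'' times an explicit power of $a$, and then read off all four properties from the behaviour of that factor together with the two structural hypotheses \eqref{hypothesis-monotononicty} and \eqref{hypothesis-zero}. Writing $A(a)=\frac{a^q}{q}-F(a)$, so that $F(t)-F(a)+\frac{a^q-t^q}{q}=A(a)-A(t)$, one has $A(0)=0$, $A'(s)=s^{q-1}-f(s)=s^{q-1}(1-g(s))>0$ on $(0,z^+)$ and $A'(z^+)=0$; in particular $A$ is strictly increasing on $(0,z^+)$, so the integrand $(A(a)-A(t))^{-1/p}$ is positive for $0\le t<a$. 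The substitution $t=a\tau$ then gives $A(a)-A(a\tau)=a^q\Phi(a,\tau)$ with $\Phi(a,\tau)=\frac{1-\tau^q}{q}-\int_\tau^1\rho^{q-1}g(a\rho)\,d\rho$, whence
\[
I(a)=a^{1-\frac{q}{p}}\,M(a),\qquad M(a):=\int_0^1\Phi(a,\tau)^{-\frac1p}\,d\tau .
\]
This factorisation is the backbone of the whole proof.

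For $(i)$ and $(ii)$ I would work on a compact subinterval $[a_1,a_2]\subset(0,z^+)$: since $A'$ is bounded below by a positive constant near any such $a<z^+$, one gets $\Phi(a,\tau)\ge c(1-\tau)$ uniformly near $\tau=1$, so the integrand defining $M$ is dominated by the integrable function $c^{-1/p}(1-\tau)^{-1/p}$; dominated convergence then yields continuity of $M$, and with the continuous factor $a^{1-q/p}$ this proves $(i)$. For $(ii)$, the hypothesis $\lim_{s\to0}g(s)=0$ forces $\Phi(a,\tau)\to\frac{1-\tau^q}{q}$ as $a\to0^+$, so $M(a)\to q^{1/p}\int_0^1(1-\tau^q)^{-1/p}\,d\tau=:C_0\in(0,\infty)$; hence $I(a)=a^{1-q/p}(C_0+o(1))$, and the three cases follow from the sign of $1-q/p$ (for $q=p$ the constant $C_0$ is exactly $p^{1/p}\int_0^1(1-y^p)^{-1/p}\,dy$).

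For $(iii)$ the point is the degeneracy at the upper endpoint. Since $A'(z^+)=0$, hypothesis \eqref{hypothesis-zero} gives $A'(s)\sim c\,(z^+-s)$ as $s\to(z^+)^-$ for some $c>0$, and integrating, $A(z^+)-A(t)\sim\frac{c}{2}(z^+-t)^2$. Thus near $t=z^+$ the integrand of $I(z^+)$ behaves like $(z^+-t)^{-2/p}$, which is integrable precisely when $2/p<1$, i.e. $p>2$; this gives $I(z^+)<\infty$ for $p>2$ and $I(z^+)=+\infty$ for $1<p\le2$.

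Property $(iv)$ is where the real work lies. The key observation is that \eqref{hypothesis-monotononicty} (strict monotonicity of $g$) makes $a\mapsto\int_\tau^1\rho^{q-1}g(a\rho)\,d\rho$ strictly increasing, hence $\Phi(a,\tau)$ strictly decreasing and $M$ strictly increasing in $a$ — and this needs no differentiability of $f$, only the monotonicity of $g$. If $q\le p$ then $a^{1-q/p}$ is nondecreasing and $M$ strictly increasing, so their product $I$ is strictly increasing. If $q>p$ then $I(0^+)=+\infty$ by $(ii)$ and $I$ is continuous and positive; when $1<p\le2$ we also have $I(z^+)=+\infty$ by $(iii)$, so $I$ is coercive at both ends of $(0,z^+)$ and attains a positive minimum at an interior point $a_*$. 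The remaining, and hardest, case is $q>p$ with $p>2$, where $I(z^+)<\infty$ and one must rule out a monotone decrease all the way to $z^+$. I would do this by showing $I(a)<I(z^+)$ for $a$ close to $z^+$: a matched-asymptotics estimate near $(a,\tau)=(z^+,1)$, using the quadratic degeneracy of $(iii)$, gives $M(z^+)-M(a)\sim C(z^+-a)^{1-2/p}$ with $C>0$. In the identity
\[
I(z^+)-I(a)=(z^+)^{1-\frac qp}\big(M(z^+)-M(a)\big)+M(a)\big((z^+)^{1-\frac qp}-a^{1-\frac qp}\big)
\]
the first summand is then positive of order $(z^+-a)^{1-2/p}$, while the second is negative of order $(z^+-a)$; since $1-2/p\in(0,1)$ the first dominates, so $I(z^+)-I(a)>0$ near $z^+$. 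Hence $I$ turns upward before reaching $z^+$, the infimum is attained at some $a_*\in(0,z^+)$, and $I(a_*)>0$ because the integrand is strictly positive. The main obstacle is precisely this last estimate: controlling $M(z^+)-M(a)$ uniformly through the transition between the linear regime ($1-\tau\ll z^+-a$) and the quadratic regime near the degenerate endpoint, so as to extract the sharp exponent $1-2/p$.
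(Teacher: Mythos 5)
Your proposal is correct, and for parts $(i)$--$(iii)$ it travels essentially the paper's own road: the paper likewise substitutes $t=ay$ and writes $I(a)=\int_0^1\Phi(a,y)^{-1/p}dy$ with $\Phi(a,y)=\frac{F(ay)-F(a)}{a^p}+a^{q-p}\frac{1-y^q}{q}$, which is exactly $a^{q-p}$ times your shape factor, derives from the monotonicity of $g$ the sandwich $\frac{a^{q-1}-f(a)}{qa^{p-1}}(1-y^q)<\Phi(a,y)<\frac{a^{q-p}}{q}(1-y^q)$ (its \eqref{Fsupestimative}) — your dominated-convergence justification of continuity is in fact tidier than the paper's bare appeal to this inequality — and proves $(iii)$ from the same quadratic degeneracy at $z^+$ forced by $L<0$ in \eqref{hypothesis-zero}. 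The genuine divergence is in $(iv)$. The paper differentiates under the integral sign, $I'(a)=-\frac{1}{p}\int_0^1\Phi^{-\frac1p-1}\Phi_a\,dy$, and from the bound \eqref{Phiderivatesupestimate} reads off $\Phi_a<0$ for $q\leqslant p$ (so $I$ increases) and, for $q>p$, that the right-hand side of \eqref{Phiderivatesupestimate} vanishes at $a=z^+$ because $(z^+)^{q-1}=f(z^+)$, whence $\Phi_a<0$ and $I'>0$ near $z^+$; combined with $I(0^+)=+\infty$ this gives the interior minimizer $a_*$ uniformly in $p$, with no split into $p\leqslant2$ and $p>2$. What your route buys: replacing the derivative by the monotone dependence of $\Phi$ on $a$ settles $q\leqslant p$ with no differentiation at all, and it sidesteps the justification of differentiating under the integral near the singular endpoint $y=1$, which the paper passes over in silence (note $\Phi_a$ involves only $f$, not $f'$, so the paper needs no $C^1$ hypothesis either — the only issue there is the interchange, and the paper's closing claim that $\Phi_a(a_*,y)=0$ for all $y$ simultaneously is an overstatement of what is actually needed). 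What it costs: in the case $q>p$, $p>2$ you must beat the decreasing prefactor $a^{1-q/p}$ by hand. Your key estimate is sound and provable: \eqref{hypothesis-zero} gives $A'(s)=(c+o(1))(z^+-s)$ with $c=-L(q-1)(z^+)^{q-2}>0$, and the exact identity $\int_{a\tau}^a(z^+-s)\,ds=a(1-\tau)\bigl[(z^+-a)+\tfrac{a(1-\tau)}{2}\bigr]$ yields $\Phi(a,\tau)\asymp(1-\tau)\bigl[(z^+-a)+(1-\tau)\bigr]$ uniformly near $(z^+,1)$; moreover you only need the one-sided bound $M(z^+)-M(a)\geqslant c'(z^+-a)^{1-2/p}$, which already follows from the single region $1-\tau\leqslant\kappa(z^+-a)$, where $\Phi(z^+,\tau)^{-1/p}\geqslant c_1(1-\tau)^{-2/p}$ dominates $\Phi(a,\tau)^{-1/p}\leqslant C_1\bigl((1-\tau)(z^+-a)\bigr)^{-1/p}$ — so the uniform matching through the transition region that you flag as the main obstacle can be avoided entirely. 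Two small patches: your coercivity of $I$ at $z^+$ when $p\leqslant2$, and the identity $\lim_{a\to(z^+)^-}I(a)=I(z^+)$ needed when $p>2$, both follow at once from monotone convergence, since $\Phi(a,\tau)^{-1/p}$ increases in $a$.
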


\begin{proof}
By the change of variables $t=ay$, we have
\begin{equation*}
I(a)=\int^1_0\left(\Phi(a,y)\right)^{-\frac{1}{p}}dy.
\end{equation*}
where 
\begin{equation}\label{Phi}
\Phi(a,y):=\frac{F(ay)-F(a)}{a^p}+a^{q-p}\frac{1-y^q}{q}>0
\end{equation}
for all $a\in(0,z^+)$ and $y\in(0,1)$. Using the hypothesis that $\mu\mapsto \frac{f(\mu)}{\mu^{q-1}}$ is strictly increasing on $(0,z^+)$ we have the estimate  
\begin{equation}\label{Fsupestimative}
F(a)-F(ay)=\int^{a}_{ay}f(\mu)d\mu<\int_{ay}^{a}\left(\frac{\mu}{a}\right)^{q-1}f(a)d\mu=\frac{f(a)a}{q}(1-y^q)
\end{equation}
This  inequality imply
\begin{equation*}
0<\frac{a^{q-1}-f(a)}{qa^{p-1}}(1-y^q)<\Phi(a,y)<\frac{a^{q-p}}{q}(1-y^q)
\end{equation*}
that implies
\begin{equation*}
	\frac{q^{\frac{1}{p}}}{a^{\frac{q-p}{p}}}\int^1_0(1-y^q)^{-\frac{1}{p}}dy<I(a)<\frac{(qa^{p-1})^{\frac{1}{p}}}{(a^{q-1}-f(a))^{\frac{1}{p}}}\int^1_0(1-y^q)^{-\frac{1}{p}}dy.
\end{equation*}
This last inequality  we obtain $i)$ and $ii)$. 

\bigskip

By Taylor's formula, we have  $F(t)-F(z^+)+\frac{(z^+)^q-t^q}{q}=-\frac{L(q-1)(z^+)^{q-2}}{2}(t-z^+)^2+h((z^+-t)^2)$ (with $\displaystyle\lim_{t\to z^+}\frac{h((z^+-t)^2)}{(z^+-t)^2}=0$) for $t$ close to $z^+$, since $z^+$ is double zero of the function $t\mapsto F(t)-F(z^+)+\frac{(z^+)^q-t^q}{q}$ and this function is twice differentiable in $z^+$(consequence of \eqref{hypothesis-zero}). From \eqref{I}
\begin{align*}
I(z^+)&=\int_0^{a}\left(F(t)-F(z^+)+\frac{(z^+)^q-t^q}{q}\right)^{-\frac{1}{p}}dt+\int_a^{z^+}\left(F(t)-F(z^+)+\frac{(z^+)^q-t^q}{q}\right)^{-\frac{1}{p}}dt\\
&\approx\int_0^{a}\left(F(t)-F(z^+)+\frac{(z^+)^q-t^q}{q}\right)^{-\frac{1}{p}}dt+\left(-\frac{L(q-1)(z^+)^{q-2}}{2}\right)^{-\frac{1}{p}}\int_a^{z^+}\left(z^+-t\right)^{-\frac{2}{p}}dt\\
\end{align*}
for some $a$ close to $z^+$, where $L$ is defined in \eqref{hypothesis-zero}. So,  we have  $iii)$.

\bigskip

Now, we will show $iv)$. In fact, we differentiate $I$, with respect to $a$, we obtain
\begin{equation}\label{Iderivate}
I'(a)=-\frac{1}{p}\int^1_0\left(\Phi(a,y)\right)^{-\frac{1}{p}-1}\Phi_a(a,y)dy
\end{equation}
where
\begin{equation*}
\Phi_a(a,y)=\frac{q\left[f(ay)ay-f(a)a-p(F(ay)-F(a))\right]+(q-p)a^{q}(1-y^q)}{qa^{p+1}}.
\end{equation*}
We need to study the sign of $\Phi_a(a,y)$. Let's go with that, using \eqref{Fsupestimative} and $\mu\mapsto \frac{f(\mu)}{\mu^{q-1}}$ is strictly increasing on $(0,z^+)$, we obtain
\begin{equation}\label{Phiderivatesupestimate}
\Phi_a(a,y)<\frac{(q-p)(a^{q-1}-f(a))(1-y^q)}{qa^p}
\end{equation}
So, for $q\leqslant p$, $\Phi_a(a,y)<0$ for all $(a,y)\in(0,z^+)\times(0,1)$, then from \eqref{Iderivate} we conclude that $I$ is strictly increasing since $q\leqslant p$. Moreover, from \eqref{Phiderivatesupestimate} we have $\Phi_a(z^+,y)<0$ since $(z^+)^{q-1}-f(z^+)=0$. Thus  there exists $a_*\in(0,z^+)$ such that $\Phi_a(a_*,y)=0$ for all $y\in(0,1)$, that show $iv)$.
\end{proof}

\bigskip

Now, we will continue with our study of the function $2\theta$, Lemma \ref{function_I} gives
\begin{equation}\label{limitstheta}
\begin{cases}
\text{$\theta$ is a increasing function and}\quad\frac{1}{2}\left(\frac{\lambda_1}{\lambda}\right)^\frac{1}{p}\stackrel{r\to 0^+}{\longleftarrow}\theta(r)\stackrel{r\to r(\lambda)^-}\longrightarrow \theta(r(\lambda)),&\text{if $q=p$},\\
\text{$\theta$ is a increasing function and}\quad 0\stackrel{r\to 0^+}{\longleftarrow}\theta(r)\stackrel{r\to r(\lambda)^-}\longrightarrow \theta(r(\lambda)),&\text{if $q<p$},\\
\text{$\theta(r)\geqslant\left(\frac{p-1}{\lambda p}\right)^{\frac{1}{p}}I(a_*)$ for all $r\in(0,r(\lambda))$ and}\quad+\infty\stackrel{r\to 0^+}{\longleftarrow}\theta(r)\stackrel{r\to r(\lambda)^-}{\longrightarrow} \theta(r(\lambda)),&\text{if $q>p$},
\end{cases}
\end{equation}
 where
\begin{equation*}
\theta(r(\lambda)) = \int_0^{z^+} \left(r(\lambda)^p+\frac{\lambda p}{(p-1)q}(qF(t)- t^q)\right)^{-\frac{1}{p}}dt=
\begin{cases}
+\infty,& \quad \text{if} \quad1<p\leqslant 2,\\
x(\lambda)<+\infty,& \quad \text{if} \quad2<p<\infty,
\end{cases}
\end{equation*} 
since that $z(r)\stackrel{r\to 0^+}{\longrightarrow} 0$  and $z(r)\stackrel{r\to r(\lambda)^-}\longrightarrow z^+.$ 

\bigskip

First we suppose $1<p\leqslant 2$. Then we have 
\begin{equation*}
2\theta((0,r(\lambda)))=
\begin{cases}
\left(\left(\frac{\lambda_1}{\lambda}\right)^\frac{1}{p},+\infty\right),&\text{if $q=p$}\\
\left(0,+\infty\right),&\text{if $q<p$}\\
\left[2\left(\frac{p-1}{\lambda p}\right)^\frac{1}{p}I(a_*),+\infty\right),&\text{if $q>p$.}\\
\end{cases}
\end{equation*}
If $q=p$ and $\lambda_1 <\lambda$ we have that $\left(\frac{\lambda_1}{\lambda}\right)^\frac{1}{p}<1$ and there is a unique $r_1\in (0,r(\lambda))$ such that $2\theta(r_1)=1$ and, associated to that, exactly one $\phi\in S_1^+$ positive solution of \eqref{equilibria}. Now, if $q<p$ and $0<\lambda$ there is $r_1\in (0,r(\lambda))$ such that $2\theta(r_1)=1$ and, associated to that, exactly one $\phi\in S_1^+$ positive solution of \eqref{equilibria}. It remains the case  $q>p$, if $\lambda^+_{*,1}:=\frac{p-1}{p}\left(2I(a_*)\right)^p<\lambda$  there are at least $r_1,r^*_1\in (0,r(\lambda))$ such that $r_1<r^*_1$ and  $2\theta(r_1)=2\theta(r^*_1)=1$ and, associated to those, exactly two $\phi\in S_1^+$ positive solutions of \eqref{equilibria}. And if $\frac{p-1}{p}\left(2I(a_*)\right)^p=\lambda$ there is at least $r_1\in (0,r(\lambda))$ such that $2\theta(r_1)=1$ and, associated to that, exactly one $\phi\in S_1^+$ positive solution of \eqref{equilibria}.

\bigskip

Now, we suppose $p>2$, then we have
\begin{equation*}
2\theta((0,r(\lambda)])=
\begin{cases}
\left(\left(\frac{\lambda_1}{\lambda}\right)^\frac{1}{p},2x(\lambda)\right],&\text{if $q=p$}\\
\left(0,2x(\lambda)\right],&\text{if $q<p$}\\
\left[2\left(\frac{p-1}{\lambda p}\right)^\frac{1}{p}I(a_*),2x(\lambda),\right],&\text{if $q>p$}\\
\end{cases}
\end{equation*}
If $q=p$ and $\lambda_1 <\lambda$ and $2x(\lambda)\geqslant 1$ there is exactly one positive solution. On the other hand,  $2x(\lambda)<1$. Then, we take $\phi\in S^+_1$ such that $\phi(0)=0$ and $\phi$ satisfies 
$$
\phi_x(x)=\left(r(\lambda)^p+\frac{\lambda p}{(p-1)q}(qF(\phi(x))-|\phi(x)|^q)\right)^\frac{1}{p}
$$
for $x\in (0,x(\lambda))$, $\phi(x)=z^+$, for $x\in [x(\lambda),1-x(\lambda)]$
and 
$$
\phi_x(x)=-\left(r(\lambda)^p+\frac{\lambda p}{(p-1)q}(qF(\phi(x))-|\phi(x)|^q)\right)^\frac{1}{p}
$$
for $x\in (1-x(\lambda),1)$. Now, if $q<p$ and $0 <\lambda$ we have exactly one positive solution with flat core when $2x(\lambda)<1$ and with dead core when $2x(\lambda)\geqslant1$. It remains to consider the case $q>p$, in which   we have 
\begin{equation*}
\begin{cases}
\text{at least two positive solution of \eqref{equilibria}},& \text{$\frac{p-1}{p}\left(2I(a_*)\right)^p<\lambda$,}\\
\text{at least one positive solution of \eqref{equilibria}},& \text{$\frac{p-1}{p}\left(2I(a_*)\right)^p=\lambda$.}
\end{cases}
\end{equation*}

\subsection*{Negative solution}

Similarly to the positive case we obtain that $S$ is strictly increasing on $(0,r^-(\lambda))$, from  \eqref{negativezero} and \eqref{alpha} we have
\begin{equation}\label{alphaJ}
\alpha(r)=\left(\frac{p-1}{\lambda p}\right)^{\frac{1}{p}}\int^0_{S(r)}\left(F(t)-F(S(r))+\frac{|S(r)|^q-|t|^q}{q}\right)^{-\frac{1}{p}}dt=\left(\frac{p-1}{\lambda p}\right)^{\frac{1}{p}}J(S(r))
\end{equation}
where
\begin{equation}\label{J}
J(a)=\int^0_{a}\left(F(t)-F(a)+\frac{|a|^q-|t|^q}{q}\right)^{-\frac{1}{p}}dt
\end{equation}
for all $a\in(z^-,0)$.
\begin{lemma}\label{function_J} The function $J$ 
has the following properties    
\begin{enumerate}
\item[i)] $J$ is continuous in $(z^-,0)$
\item[ii)]  \begin{equation*}
\lim_{a\rightarrow 0^-} J(a)=
\begin{cases} 
0,&\ \hbox{ if } q<p,\\
p^{\frac{1}{p}}\int^1_0(1-y^p)^{-\frac{1}{p}}dy,&\ \hbox{ if } q=p,\\
+\infty,&\ \hbox{ if } q>p
\end{cases}
\end{equation*}
\item[iii)]    
\begin{equation*}
	J(z^-)=\begin{cases} 
		+\infty,&\ \hbox{ if } 1<p\leqslant2,\\
		<+\infty,&\ \hbox{ if } p>2.
	\end{cases}
\end{equation*}

\item[iv)]  
\begin{equation*}
\begin{cases} 
\text{$J$ is decreasing in $(z^-,0)$,}&\ \hbox{ if } q\leqslant p,\\
\text{$J(a)\geqslant J(b_*)>0$ for all $a\in(z^-,0)$ for some $b_*\in(z^-,0)$,}&\ \hbox{ if } q>p.
\end{cases}
\end{equation*}
\end{enumerate}
\end{lemma}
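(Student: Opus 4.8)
The plan is to establish Lemma \ref{function_J} by reducing it to the already-proved Lemma \ref{function_I} via the reflection $s\mapsto -s$, rather than repeating the four estimates from scratch. I introduce $\tilde f(s):=-f(-s)$, whose primitive is $\tilde F(s)=\int_0^s\tilde f=F(-s)$, and I let $\tilde I$ denote the function defined exactly as in \eqref{I} but with $F$ replaced by $\tilde F$. The first step is to perform the substitution $t=-\tau$ in \eqref{J}: writing $b=-a\in(0,|z^-|)$ and using $|a|=b$, $|t|=\tau$ for $\tau\in(0,b)$, and $F(a)=\tilde F(b)$, a direct computation gives
$$J(a)=\int_0^{b}\left(\tilde F(\tau)-\tilde F(b)+\frac{b^q-\tau^q}{q}\right)^{-\frac1p}d\tau=\tilde I(-a),$$
so that $J(a)=\tilde I(|a|)$ on $(z^-,0)$. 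Consequently every property of $J$ follows from the corresponding property of $\tilde I$ on $(0,|z^-|)$, once I verify that $\tilde f$ satisfies on $(0,|z^-|)$ exactly the standing hypotheses that $f$ satisfies on $(0,z^+)$.

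The key verification is that $\tilde g(s):=\tilde f(s)/(|s|^{q-2}s)=g(-s)$, so the requirement in \eqref{hypothesis-monotononicty} that $g$ be strictly \emph{decreasing} on $(z^-,0)$ is precisely what makes $\tilde g$ strictly \emph{increasing} on $(0,|z^-|)$, matching the hypothesis used throughout the proof of Lemma \ref{function_I}; equivalently $\mu\mapsto\tilde f(\mu)/\mu^{q-1}=-f(-\mu)/\mu^{q-1}$ is strictly increasing on $(0,|z^-|)$. I would also check that the first positive zero of $\mu^{q-1}-\tilde f(\mu)=\mu^{q-1}+f(-\mu)$ is exactly $\tilde z^+=|z^-|$, which is immediate from the defining relation $|z^-|^{q-1}+f(z^-)=0$ for $z^-$, and that the two limit conditions of \eqref{hypothesis-zero} transfer to $\tilde f$ at $0$ and at $\tilde z^+$ with the same constant $L<0$.

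With these checks in place, parts i)--iv) are immediate. Continuity (i) and the $a\to 0^-$ limits (ii) come from $J(a)=\tilde I(|a|)$ together with Lemma \ref{function_I} i)--ii) (the $q=p$ constant $p^{1/p}\int_0^1(1-y^p)^{-1/p}dy$ being independent of the nonlinearity, hence reflection-invariant); the boundary value $J(z^-)=\tilde I(\tilde z^+)$ yields (iii) from Lemma \ref{function_I} iii); and the monotonicity/coercivity of $\tilde I$ from Lemma \ref{function_I} iv) gives (iv), with $b_*$ the reflection of the minimizer of $\tilde I$. The step requiring the most care is the bookkeeping of the reflection: one must track the orientation of the change of variables (the factor $|a|$ versus $a$) and, above all, confirm that the \emph{asymmetry} built into \eqref{hypothesis-monotononicty}---decreasing on the negative side, increasing on the positive side---is exactly what is needed for $\tilde g$ to inherit the monotonicity hypothesis. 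Without this sign match the reduction would fail, and one would be forced to redo the $\Phi$-estimate \eqref{Fsupestimative}, the bounds in ii), and the Taylor expansion at $z^-$ directly on the negative interval.
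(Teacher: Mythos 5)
Your proof is correct, and it takes a genuinely cleaner route than the paper, which states Lemma \ref{function_J} without any proof at all: the intended argument there is a line-by-line repetition of the proof of Lemma \ref{function_I} on the negative side (this is what the ``Similarly to the positive case'' remark before \eqref{alphaJ} signals), whereas you replace the repetition by a one-shot reflection reduction. Your identity $J(a)=\tilde{I}(|a|)$ with $\tilde{f}(s)=-f(-s)$ and $\tilde{F}(s)=F(-s)$ checks out (the substitution $t=-\tau$ and the orientation of the limits are handled correctly), and you correctly isolate the only point where the reduction could fail, namely the hypothesis transfer: since $\tilde{g}(s)=g(-s)$, the asymmetric assumption \eqref{hypothesis-monotononicty} that $g$ is strictly \emph{decreasing} on $(z^-,0)$ is exactly what makes $\tilde{g}$ strictly \emph{increasing} on $(0,|z^-|)$, which is the only form in which the monotonicity hypothesis enters the proof of Lemma \ref{function_I} (through the estimate \eqref{Fsupestimative}); moreover $\tilde{z}^+=|z^-|$ follows from $|z^-|^{q-1}+f(z^-)=0$, and the second limit in \eqref{hypothesis-zero} at $z^-$ transfers to $\tilde{z}^+$ with the same negative constant, which is precisely what the Taylor-expansion step in part iii) requires. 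The monotonicity reversal under $a\mapsto -a$ then correctly converts ``$\tilde{I}$ increasing'' into ``$J$ decreasing'' when $q\leqslant p$, and $b_*=-a_*$ settles the case $q>p$. What your route buys is economy and safety: the four estimates of Lemma \ref{function_I} are used as a black box, the sign bookkeeping is done once, and as a bonus the same reflection yields $S(r)=-\tilde{z}(r)$ and $\alpha(r)=\tilde{\theta}(r)$, consistent with \eqref{negativezero} and \eqref{alphaJ}. What the direct re-derivation would buy instead is explicit negative-side formulas (the analogue of $\Phi$ in \eqref{Phi} on $(z^-,0)$), but nothing in the rest of the paper needs those beyond what your reduction already provides.
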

That implies
\begin{equation}\label{limitsalpha}
\begin{cases}
\text{$\alpha$ is a increasing function and}\quad\frac{1}{2}\left(\frac{\lambda_1}{\lambda}\right)^\frac{1}{p}\stackrel{r\to 0^+}{\longleftarrow}\alpha(r)\stackrel{r\to r(\lambda)^-}\longrightarrow \alpha(r^-(\lambda)),&\text{if $q=p$},\\
\text{$\alpha$ is a increasing function and}\quad 0\stackrel{r\to 0^+}{\longleftarrow}\alpha(r)\stackrel{r\to r^-(\lambda)^-}\longrightarrow \alpha(r^-(\lambda)),&\text{if $q<p$},\\
\text{$\alpha(r)\geqslant\left(\frac{p-1}{\lambda p}\right)^{\frac{1}{p}}J(b_*)$ for all $r\in(0,r^-(\lambda))$ and}\quad+\infty\stackrel{r\to 0^+}{\longleftarrow}\theta(r)\stackrel{r\to r^-(\lambda)^-}{\longrightarrow} \alpha(r^-(\lambda)),&\text{if $q>p$},
\end{cases}
\end{equation}
where
\begin{equation*}
	\alpha(r^-(\lambda)) = \int_{z^-}^0 \left(r^-(\lambda)^p+\frac{\lambda p}{(p-1)q}(qF(t)- |t|^q)\right)^{-\frac{1}{p}}dt=
	\begin{cases}
		+\infty,& \quad \text{if} \quad1<p\leqslant 2,\\
		y(\lambda)<+\infty,& \quad \text{if} \quad2<p<\infty,
	\end{cases}
\end{equation*} 
since that $S(r)\stackrel{r\to 0^+}{\longrightarrow} 0$  and $S(r)\stackrel{r\to r^-(\lambda)^-}\longrightarrow z^-.$ 

\bigskip

Thus, if $1<p\leqslant 2$  we have 
\begin{equation*}
\begin{cases}
\text{exactly one negative solution of \eqref{equilibria}},& \text{if $\lambda_1<\lambda$ and $q=p$}\\
\text{exactly one negative solution of \eqref{equilibria}},& \text{if $0<\lambda$ and $q<p$}\\
\text{at least two negative solution of \eqref{equilibria}},& \text{if $\lambda^-_{*,1}:=\frac{p-1}{p}\left(2J(b_*)\right)^p<\lambda$ and $q>p$,}\\
\text{at least one negative solution of \eqref{equilibria}},& \text{if $\lambda^-_{*,1}=\lambda$ and $q>p$,}
\end{cases}
\end{equation*}
and for $2<p<+\infty$
\begin{equation*}
\begin{cases}
\text{exactly one negative solution of \eqref{equilibria}},& \text{if $\lambda_1<\lambda$, $2y(\lambda)\geqslant1 $ and $q=p$}\\
\text{exactly one negative solution of \eqref{equilibria}},& \text{if $\lambda_1<\lambda$, $2y(\lambda)<1 $ and $q=p$}\\
\text{exactly one negative solution of \eqref{equilibria}},& \text{if $0<\lambda$, $2y(\lambda)\geqslant1 $ and $q<p$}\\
\text{exactly one negative solution of \eqref{equilibria}},& \text{if $0<\lambda$, $2y(\lambda)<1 $ and $q<p$}\\
\text{at least one negative solution of \eqref{equilibria}},& \text{if $0<\lambda$, $2y(\lambda)<1$ and $q>p$,}\\
\text{at least two negative solution of \eqref{equilibria}},& \text{if $\lambda^-_{*,1}<\lambda$, $1\leqslant 2y(\lambda)$ and $q>p$,}\\
\text{at least one negative solution of \eqref{equilibria}},& \text{if $\lambda^-_{*,1}=\lambda$ and $q>p$.}
\end{cases}
\end{equation*}

\section{Solutions in $S^\pm_{2j}$}\label{S-even}

This section is dedicated to study of  solutions of \eqref{equilibria} that live in $S^\pm_{2j}$. For this we need to study $\frac{1}{j}$ belong to the  image of the function $2(\theta+\alpha):(0,r^*(\lambda))\rightarrow \R^+$, where $r^*(\lambda)=\min\{r(\lambda),r^-(\lambda)\}$. First note that the increasing functions $z:(0,r^*(\lambda))\to (0,z(r^*(\lambda)))\subseteq(0,z^+)$ and $S:(0,r^*(\lambda))\to (S(r^*(\lambda)),0)\subseteq(z^-,0)$ are given implicitly by  \eqref{thetaI} and \eqref{alphaJ} for all $r\in(0,r^*(\lambda))$.

\bigskip

From \eqref{limitstheta} and \eqref{limitsalpha} we have $2\theta+2\alpha$ is increasing for  $q\leqslant p$ and
\begin{equation*}
(2\theta+2\alpha)((0,r^*(\lambda)])=
	\begin{cases}
		\left(2\left(\frac{\lambda_1}{\lambda}\right)^\frac{1}{p},2\theta(r^*(\lambda))+2\alpha(r^*(\lambda))\right],&\text{if $q=p$}\\
		\left(0,2\theta(r^*(\lambda))+2\alpha(r^*(\lambda))\right],&\text{if $q<p$}
	\end{cases}
\end{equation*}
And $2\theta(r)+2\alpha(r)\geqslant \left(\frac{p-1}{\lambda p}\right)^{\frac{1}{p}}2I_e$ for all $ r\in(0,r^*(\lambda))$ if $q>p$ where 
\begin{equation} \label{Ie}
	I_e=\min_{r\in[0,r^*(\lambda)]}\{I(z(r))+J(S(r))\}=I(z(r_e))+J(S(r_e))),\quad\text{for some $r_e\in(0,r^*(\lambda))$ }
\end{equation}
thus
\begin{equation*}
		\begin{cases}
		(2\theta+2\alpha)((0,r_e])=\left[2\left(\frac{p-1}{\lambda p}\right)^\frac{1}{p}I(a_*),+\infty\right],&\text{if $q>p$}\\
		(2\theta+2\alpha)((r_e,r^*(\lambda)])=\left(2\left(\frac{p-1}{\lambda p}\right)^\frac{1}{p}I(a_*),2\theta(r^*(\lambda))+2\alpha(r^*(\lambda))\right],&\text{if $q>p$}
	\end{cases}
\end{equation*}
where
\begin{equation*}
(\theta+\alpha)(r^*(\lambda))  = \int_{S(r^*(\lambda))}^{z(r^*(\lambda))} \left(r^*(\lambda)^p+\frac{\lambda p}{(p-1)q}(qF(t)- |t|^q)\right)^{-\frac{1}{p}}dt=
\begin{cases}
+\infty,& 1<p\leqslant 2,\\
<+\infty,&  2<p.
\end{cases}
\end{equation*} 

\bigskip

First, we suppose  $1<p\leqslant 2$. If $q=p$ and  $\lambda_{2k}=(2k)^p\lambda_1 <\lambda\leqslant \lambda_{2(k+1)}=(2(k+1))^p\lambda_1 $, we have
$$
\frac{1}{j}\in\left(2\left(\frac{\lambda_1}{\lambda}\right)^\frac{1}{p},+\infty\right)=2(\theta+\alpha)((0,r^*(\lambda)))
$$
for $j=1,\cdots,k$, so there is  one $r_j\in (0,r^*(\lambda))$ such that $2j(\theta(r_j)+\alpha(r_j))=1$ and, associated to that,  exactly one $\phi\in S^\pm_{2j}$ solution of \eqref{equilibria}. If $q<p$ and  $0<\lambda$, we have
$$
\frac{1}{j}\in\left(0,+\infty\right)=2(\theta+\alpha)((0,r^*(\lambda)))
$$
for $j=1,2,\cdots,$ thus there is  one $r_j\in (0,r^*(\lambda))$ such that $2j(\theta(r_j)+\alpha(r_j))=1$ and, associated to that,  exactly one $\phi\in S^\pm_{2j}$ solution of \eqref{equilibria}. It remains $q>p$. If  $\lambda^+_{*,2k}=\frac{p-1}{p}\left(2kI_e\right)^p<\lambda\leqslant \frac{p-1}{p}\left((2(k+1))I_e\right)^p=\lambda^+_{*,2(k+1)}$, we have
$$
\frac{1}{j}\in\left[2\left(\frac{p-1}{\lambda p}\right)^\frac{1}{p}I_e,+\infty\right)=2(\theta+\alpha)((0,r^*(\lambda)))
$$
for $j=1,\cdots,k$, so there are at least $r_j,r_j^*\in (0,r^*(\lambda))$ such that $2j(\theta(r_j)+\alpha(r_j))=2j(\theta(r^*_j)+\alpha(r^*_j))=1$ and, associated to those,  two $\phi,\psi\in S_{2j}^\pm$  solution of \eqref{equilibria}. And if $\frac{p-1}{p}\left(2kI_e\right)^p=\lambda$ we have   $2k(\theta(r_e)+\alpha(r_e))=1$ and, associated to that, at least  one $\phi\in S_{2k}^\pm$  solution of \eqref{equilibria}.

\bigskip

Now, we suppose $2<p<+\infty$. If $q=p$ and  $\lambda_{2k}=(2k)^p\lambda_1 <\lambda \leqslant\lambda_{2(k+1)}=(2(k+1))^p\lambda_1 $ for some $k=1,2,\cdots$, we have 
$$
\frac{1}{i}\in\left[ 2\left(\frac{\lambda_1}{\lambda}\right)^\frac{1}{p},2(\theta(r^*(\lambda))+\alpha(r^*(\lambda)))\right)= 2(\theta+\alpha)((0,r^*(\lambda)]),
$$
for $i=j,\cdots,k$ where $j$  satisfies $2j(\theta(r^*(\lambda))+\alpha(r^*(\lambda)))\geqslant 1>2(j-1)(\theta(r^*(\lambda))+\alpha(r^*(\lambda)))$, then for $i=j,\cdots,k$  there is exactly one $\phi\in S^\pm_{2i}$. On the other hand, it is $2i(\theta(r^*(\lambda))+\alpha(r^*(\lambda)))<1$ for $i=1,\cdots,j-1$, we proceed as follows: 
 
\bigskip 

CASE 1; $A(z^+)<A(z^-)$(see the Figure \ref{fig21}). In this case we have $r^*(\lambda)=r(\lambda)<r^-(\lambda)$ since $r(\lambda)^p=\frac{\lambda pA(z^+)}{p-1}$ and $r^-(\lambda)^p=\frac{\lambda pA(z^-)}{p-1}$. Let $(a_1,\cdots,a_{i})\in (\R^+)^{i}$ such that $\displaystyle\sum_{\ell=1}^{i}a_\ell=1-2ix(\lambda)-2i\alpha(r(\lambda))$ and we take $\phi\in S^+_{2i}$ such that $\phi(0)=0$, $\phi(x)=z^+$ for all $x\in I^1_j=\left[x(\lambda)+2(j-1)(x(\lambda)+\alpha(r(\lambda)))+\displaystyle\sum_{\ell=1}^{j-1}a_\ell,x(\lambda)+2(j-1)(x(\lambda)+\alpha(r(\lambda)))+\displaystyle\sum_{\ell=1}^{j}a_\ell\right]$ for $1\leqslant j\leqslant i$ and $\phi$ satisfies 

%\eqref{dynamical-rlambda}
%\begin{equation*}
%	\begin{cases}
%		\text{$\phi$ satisfies \eqref{dynamical-rlambda},}&\text{$x\in[0,x(\lambda)]$,}\\
%		\phi(x)=z^+,&\text{$x\in [x(\lambda),x(\lambda)+a_1]$,}\\ 
%		\text{$\phi$ satisfies \eqref{dynamical-rlambda},}&\text{$ x\in [x(\lambda)+a_1,3x(\lambda)+2\alpha(r(\lambda))+a_1]$,}\\
%		\phi(x)=z^+,&\text{$x\in  [3x(\lambda)+2\alpha(r(\lambda))+a_1,3x(\lambda)+2\alpha(r(\lambda))+a_1+a_2]$,}\\
%		\cdots,&\cdots,\\
%		\phi(x)=z^+,&\text{$x\in [2(i-1)(x(\lambda)+\alpha(r(\lambda)))+\displaystyle\sum_{\ell=1}^{i-1}a_\ell,2(i-1)(x(\lambda)+\alpha(r(\lambda)))+\displaystyle\sum_{\ell=1}^{i}a_\ell]$,}\\
%		\text{$\phi$ satisfies \eqref{dynamical-rlambda},}&\text{$x\in [2(i-1)x(\lambda)+2(i-1)\alpha(r(\lambda))+a_1+\cdots+a_i,1]$.}
%	\end{cases}
%\end{equation*}
%where
\begin{equation}\label{dynamical-rlambda}
	|\phi_x(x)|^p=r^*(\lambda)^p+\frac{\lambda p}{(p-1)q}(qF(\phi(x))-|\phi(x)|^q)
\end{equation}
for all $x\in\displaystyle [0,1]\setminus(\cup^i_{j=1}I^1_j)$.

\begin{figure}[h]
	\centering
	
	\begin{tikzpicture}[scale=3.2]
		
		\definecolor{uuuuuu}{rgb}{0.2,0.2,0.2}
		
		\definecolor{ffqqqq}{rgb}{0.9,0.1,0.0}
		
		\draw[<->,color=uuuuuu] (-0.2,0) -- (4.6,0.0);
		\draw[<->,color=uuuuuu] (0.0,-0.8) -- (0.0,0.5);
		
		\draw[-,color=uuuuuu] (0.3,0.0) -- (0.3,0.4);
		\draw[-,color=uuuuuu] (1.0,0.0) -- (1.0,0.4);
		\draw[-,color=uuuuuu] (1.8,0.0) -- (1.8,-0.7);
		\draw[-,color=uuuuuu] (2.6,0.0) -- (2.6,0.4);
		\draw[-,color=uuuuuu] (3.2,0.0) -- (3.2,0.4);
		\draw[-,color=uuuuuu] (4.0,0.0) -- (4.0,-0.7);
		
		\draw[-,color=uuuuuu] (0.0,0.4) -- (0.3,0.4);
		
		\draw[line width=1.2pt,smooth,samples=100,domain=0.0:0.3] plot(\x,{0.4*sin(((\x))*180/(0.6)});
		\draw[line width=1.2pt,smooth,samples=100,domain=0.3:1.0] plot(\x,{0.4});
		\draw[line width=1.2pt,smooth,samples=100,domain=1.0:1.3] plot(\x,{0.4*sin((1.3-(\x))*180/(0.6)});
		\draw[line width=1.2pt,smooth,samples=100,domain=1.3:1.8] plot(\x,{-0.7*sin(((\x)-1.3)*180/(1.0)});
		\draw[line width=1.2pt,smooth,samples=100,domain=1.8:2.3] plot(\x,{-0.7*sin((2.3-(\x))*180/(1.0)});
		\draw[line width=1.2pt,smooth,samples=100,domain=2.3:2.6] plot(\x,{0.4*sin(((\x)-2.3)*180/(0.6)});
		\draw[line width=1.2pt,smooth,samples=100,domain=2.6:3.2] plot(\x,{0.4});
		\draw[line width=1.2pt,smooth,samples=100,domain=3.2:3.5] plot(\x,{0.4*sin((3.5-(\x))*180/(0.6)});
		\draw[line width=1.2pt,smooth,samples=100,domain=3.5:4.0] plot(\x,{-0.7*sin(((\x)-3.5)*180/(1.0)});
		\draw[line width=1.2pt,smooth,samples=100,domain=4.0:4.5] plot(\x,{-0.7*sin((4.5-(\x))*180/(1.0)});

		\begin{scriptsize}
			
			\draw[color=uuuuuu] (1.3,-0.25) node {$\phi$};
			
			\draw [fill=uuuuuu] (0.0,0.4) circle (0.4pt);
			\draw[color=uuuuuu] (-0.1,0.4) node {$z^+$};
			
			\draw[color=uuuuuu] (0.15,0.05) node {$x(\lambda)$};
			\draw[color=uuuuuu] (1.15,0.05) node {$x(\lambda)$};
			\draw[color=uuuuuu] (0.65,0.05) node {$a_1$};
			\draw[color=uuuuuu] (1.55,0.05) node {$\alpha(r(\lambda))$};
			\draw[color=uuuuuu] (2.05,0.05) node {$\alpha(r(\lambda))$};
			\draw[color=uuuuuu] (2.90,0.05) node {$a_2$};
			\draw[color=uuuuuu] (2.45,0.05) node {$x(\lambda)$};
			\draw[color=uuuuuu] (3.35,0.05) node {$x(\lambda)$};
			\draw[color=uuuuuu] (3.75,0.05) node {$\alpha(r(\lambda))$};
			\draw[color=uuuuuu] (4.25,0.05) node {$\alpha(r(\lambda))$};

			\draw [fill=uuuuuu] (0.0,0.0) circle (1.0pt);
			\draw [fill=uuuuuu] (1.3,0.0) circle (1.0pt);
			\draw [fill=uuuuuu] (2.3,0.0) circle (1.0pt);
			\draw [fill=uuuuuu] (3.5,0.0) circle (1.0pt);
			\draw [fill=uuuuuu] (4.5,0.0) circle (1.0pt);
			\draw[color=uuuuuu] (4.56,-0.06) node {$1$};
			\draw[color=uuuuuu] (-0.06,-0.06) node {$0$};
			
		\end{scriptsize}
	\end{tikzpicture}
	\caption{Case 1, $A(z^+)<A(z^-)$ : $\phi\in S^+_4$} \label{fig21}	
\end{figure}	

\bigskip

CASE 2; $A(z^+)>A(z^-)$  (to see the Figure \ref{fig22}).  Let $(a_1,\cdots,a_{i})\in (\R^+)^{i}$ such that $\displaystyle\sum_{\ell=1}^{i}a_\ell=1-2iy(\lambda)-2i\theta(r^-(\lambda))$ and we take $\phi\in S^+_{2i}$ such that $\phi(0)=0$, $\phi(x)=z^-$ for $x\in I^2_j=\left[y(\lambda)+2(j-1)(y(\lambda)+\theta(r^-(\lambda))))+\displaystyle\sum_{\ell=1}^{j-1}a_\ell,y(\lambda)+2(j-1)(y(\lambda)+\theta(r^-(\lambda)))+\displaystyle\sum_{\ell=1}^{j}a_\ell\right]$ for $1\leqslant j\leqslant i$ and $\phi$ satisfies \eqref{dynamical-rlambda}
for all $x\in\displaystyle [0,1]\setminus(\cup^i_{j=1}I^2_j)$.

%
%
%
% and 
%\begin{equation*}
%	\begin{cases}
%		\text{$\phi$ satisfies \eqref{dynamical-rlambda},}&\text{$x\in[0,2\theta(r^-(\lambda))+y(\lambda)]$,}\\
%		\phi(x)=z^-,&\text{$x\in [2\theta(r^-(\lambda))+y(\lambda),2\theta(r^-(\lambda))+y(\lambda)+a_1]$,}\\ 
%		\text{$\phi$ satisfies \eqref{dynamical-rlambda},}&\text{$ x\in [2\theta(r^-(\lambda))+y(\lambda)+a_1,4\theta(r^-(\lambda))+3y(\lambda)+a_1]$,}\\
%		\phi(x)=z^-,&\text{$x\in  [4\theta(r^-(\lambda))+3y(\lambda)+a_1,4\theta(r^-(\lambda))+3y(\lambda)+a_1+a_2]$,}\\
%		\cdots,&\cdots,\\
%		\phi(x)=z^-,&\text{$x\in [2(i-1)(\theta(r^-(\lambda))+y(\lambda))+\displaystyle\sum_{\ell=1}^{i-1}a_\ell,2(i-1)(\theta(r^-(\lambda))+y(\lambda))+\displaystyle\sum_{\ell=1}^{i}a_\ell]$,}\\
%		\text{$\phi$ satisfies \eqref{dynamical-rlambda},}&\text{$x\in [2(i-1)\theta(r^-(\lambda))+2(i-1)y(\lambda)+a_1+\cdots+a_i,1]$.}
%	\end{cases}
%\end{equation*}
\begin{figure}[h]
	\centering
	
	\begin{tikzpicture}[scale=3.2]
		
		\definecolor{uuuuuu}{rgb}{0.2,0.2,0.2}
		
		\definecolor{ffqqqq}{rgb}{0.9,0.1,0.0}
		
		\draw[<->,color=uuuuuu] (-0.2,0) -- (4.6,0.0);
		\draw[<->,color=uuuuuu] (0.0,-0.8) -- (0.0,0.5);
		
		\draw[-,color=uuuuuu] (0.3,0.0) -- (0.3,0.4);
		\draw[-,color=uuuuuu] (1.1,0.0) -- (1.1,-0.7);
		\draw[-,color=uuuuuu] (1.4,0.0) -- (1.4,-0.7);
		\draw[-,color=uuuuuu] (2.2,0.0) -- (2.2,0.4);
		\draw[-,color=uuuuuu] (3.0,0.0) -- (3.0,-0.7);
		\draw[-,color=uuuuuu] (4.0,0.0) -- (4.0,-0.7);
		
		\draw[-,color=uuuuuu] (0.0,-0.7) -- (1.1,-0.7);
		
		\draw[line width=1.2pt,smooth,samples=100,domain=0.0:0.3] plot(\x,{0.4*sin(((\x))*180/(0.6)});
		\draw[line width=1.2pt,smooth,samples=100,domain=0.3:0.6] plot(\x,{0.4*sin((0.6-(\x))*180/(0.6)});
		\draw[line width=1.2pt,smooth,samples=100,domain=0.6:1.1] plot(\x,{-0.7*sin(((\x)-0.6)*180/(1.0)});
		\draw[line width=1.2pt,smooth,samples=100,domain=1.1:1.4] plot(\x,{-0.7});
		\draw[line width=1.2pt,smooth,samples=100,domain=1.4:1.9] plot(\x,{-0.7*sin((1.9-(\x))*180/(1.0)});
		\draw[line width=1.2pt,smooth,samples=100,domain=1.9:2.2] plot(\x,{0.4*sin(((\x)-1.9)*180/(0.6)});
		\draw[line width=1.2pt,smooth,samples=100,domain=2.2:2.5] plot(\x,{0.4*sin((2.5-(\x))*180/(0.6)});
		\draw[line width=1.2pt,smooth,samples=100,domain=2.5:3.0] plot(\x,{-0.7*sin(((\x)-2.5)*180/(1.0)});
		\draw[line width=1.2pt,smooth,samples=100,domain=3.0:4.0] plot(\x,{-0.7});
		\draw[line width=1.2pt,smooth,samples=100,domain=4.0:4.5] plot(\x,{-0.7*sin((4.5-(\x))*180/(1.0)});

		\begin{scriptsize}
			
			\draw[color=uuuuuu] (1.9,-0.25) node {$\phi$};
			
			\draw [fill=uuuuuu] (0.0,-0.7) circle (0.4pt);
			\draw[color=uuuuuu] (-0.1,-0.7) node {$z^-$};
			
			\draw[color=uuuuuu] (0.3,-0.05) node {$2\theta(r^-(\lambda))$};
			\draw[color=uuuuuu] (0.85,0.05) node {$y(\lambda)$};
			\draw[color=uuuuuu] (1.65,0.05) node {$y(\lambda)$};
			\draw[color=uuuuuu] (1.25,0.05) node {$a_1$};
			\draw[color=uuuuuu] (2.2,-0.05) node {$2\theta(r^-(\lambda))$};
			\draw[color=uuuuuu] (3.50,0.05) node {$a_2$};
			\draw[color=uuuuuu] (2.75,0.05) node {$y(\lambda)$};
			\draw[color=uuuuuu] (4.25,0.05) node {$y(\lambda)$};

			\draw [fill=uuuuuu] (0.0,0.0) circle (1.0pt);
			\draw [fill=uuuuuu] (0.6,0.0) circle (1.0pt);
			\draw [fill=uuuuuu] (1.9,0.0) circle (1.0pt);
			\draw [fill=uuuuuu] (2.5,0.0) circle (1.0pt);
			\draw [fill=uuuuuu] (4.5,0.0) circle (1.0pt);
			\draw[color=uuuuuu] (4.56,-0.06) node {$1$};
			\draw[color=uuuuuu] (-0.06,-0.06) node {$0$};
			
		\end{scriptsize}
	\end{tikzpicture}
	\caption{Case 2, $A(z^+)>A(z^-)$: $\phi\in S^+_4$} \label{fig22}	
\end{figure}	

\bigskip

CASE 3; $A(z^+)=A(z^-)$ (to see the Figure \ref{fig23}). In this case we have  $r^*(\lambda)=r(\lambda)=r^-(\lambda)$. Let $(a_1,\cdots,a_{i})\in (\R^+)^{i}$ such that $\displaystyle\sum_{\ell=1}^{i}a_\ell=1-2ix(\lambda)-2iy(r(\lambda))$ and we take $\phi\in S^+_{2i}$ such that $\phi(0)=0$, $\phi(x)=z^+$ or $\phi(x)=z^-$ alternately for all $x\in I^3_j=\left[x(\lambda)+2(j-1)(x(\lambda)+y(\lambda))+\displaystyle\sum_{\ell=1}^{j-1}a_\ell,x(\lambda)+2(j-1)(x(\lambda)+y(\lambda))+\displaystyle\sum_{\ell=1}^{j}a_\ell\right]$ for $1\leqslant j\leqslant i$ and $\phi$ satisfies 
for all $x\in\displaystyle [0,1]\setminus(\cup^i_{j=1}I^3_j)$.

% and 
%\begin{equation*}
%	\begin{cases}
%		\text{$\phi$ satisfies \eqref{dynamical-rlambda},}&\text{$x\in[0,x(\lambda)]$,}\\
%		\phi(x)=z^+,&\text{$x\in [x(\lambda),x(\lambda)+a_1]$,}\\ 
%		\text{$\phi$ satisfies \eqref{dynamical-rlambda},}&\text{$ x\in [x(\lambda)+a_1,2x(\lambda)+y(\lambda)+a_1]$,}\\
%		\phi(x)=z^-,&\text{$x\in  [2x(\lambda)+y(\lambda)+a_1,2x(\lambda)+y(\lambda)+a_1+a_2]$,}\\
%		\cdots,&\cdots,\\
%		\phi(x)=z^+,&\text{$x\in  [2ix(\lambda)+2(i-1)y(\lambda)+\displaystyle\sum_{\ell=1}^{i-1}a\ell,2ix(\lambda)+2(i-1)y(\lambda)+\displaystyle\sum_{\ell=1}^{i-1}a_\ell]$,}\\
%		\text{$\phi$ satisfies \eqref{dynamical-rlambda},}&\text{$ x\in [2ix(\lambda)-x(\lambda)+2(i-1)y(\lambda)-y(\lambda)+\displaystyle\sum_{\ell=1}^{i-1}a_\ell,2ix(\lambda)+2(i-1)y(\lambda)+\displaystyle\sum_{\ell=1}^{i-1}a_\ell]$,}\\
%		\phi(x)=z^-,&\text{$x\in  [2ix(\lambda)+2(i-1)y(\lambda)+\displaystyle\sum_{\ell=1}^{i-1}a_\ell,2ix(\lambda)+2(i-1)y(\lambda)+\displaystyle\sum_{\ell=1}^{i}a_\ell]$,}\\
%		\text{$\phi$ satisfies \eqref{dynamical-rlambda},}&\text{$ x\in [2ix(\lambda)+2(i-1)y(\lambda)+\displaystyle\sum_{\ell=1}^{i}a_\ell,1]$,}\\
%	\end{cases}
%\end{equation*}
\begin{figure}[h]
	\centering
	
	\begin{tikzpicture}[scale=3.2]
	
	\definecolor{uuuuuu}{rgb}{0.2,0.2,0.2}
	
	\definecolor{ffqqqq}{rgb}{0.9,0.1,0.0}
	
	\draw[<->,color=uuuuuu] (-0.2,0) -- (4.6,0.0);
	\draw[<->,color=uuuuuu] (0.0,-0.8) -- (0.0,0.5);
	
	\draw[-,color=uuuuuu] (0.3,0.0) -- (0.3,0.4);
	\draw[-,color=uuuuuu] (0.7,0.0) -- (0.7,0.4);
	\draw[-,color=uuuuuu] (1.5,0.0) -- (1.5,-0.7);
	\draw[-,color=uuuuuu] (1.7,0.0) -- (1.7,-0.7);
	\draw[-,color=uuuuuu] (2.5,0.0) -- (2.5,0.4);
	\draw[-,color=uuuuuu] (2.8,0.0) -- (2.8,0.4);
	\draw[-,color=uuuuuu] (3.6,0.0) -- (3.6,-0.7);
	\draw[-,color=uuuuuu] (4.0,0.0) -- (4.0,-0.7);
	
	\draw[-,color=uuuuuu] (0.0,0.4) -- (0.3,0.4);
	\draw[-,color=uuuuuu] (0.0,-0.7) -- (1.5,-0.7);
	
	\draw[line width=1.2pt,smooth,samples=100,domain=0.0:0.3] plot(\x,{0.4*sin(((\x))*180/(0.6)});
	\draw[line width=1.2pt,smooth,samples=100,domain=0.3:0.7] plot(\x,{0.4});
	\draw[line width=1.2pt,smooth,samples=100,domain=0.7:1.0] plot(\x,{0.4*sin((1.0-(\x))*180/(0.6)});
	\draw[line width=1.2pt,smooth,samples=100,domain=1.0:1.5] plot(\x,{-0.7*sin(((\x)-1.0)*180/(1.0)});
	\draw[line width=1.2pt,smooth,samples=100,domain=1.5:1.7] plot(\x,{-0.7});
	\draw[line width=1.2pt,smooth,samples=100,domain=1.7:2.2] plot(\x,{-0.7*sin((2.2-(\x))*180/(1.0)});
	\draw[line width=1.2pt,smooth,samples=100,domain=2.2:2.5] plot(\x,{0.4*sin(((\x)-2.2)*180/(0.6)});
	\draw[line width=1.2pt,smooth,samples=100,domain=2.5:2.8] plot(\x,{0.4});
	\draw[line width=1.2pt,smooth,samples=100,domain=2.8:3.1] plot(\x,{0.4*sin((3.1-(\x))*180/(0.6)});
		\draw[line width=1.2pt,smooth,samples=100,domain=3.1:3.6] plot(\x,{-0.7*sin(((\x)-3.1)*180/(1.0)});
	\draw[line width=1.2pt,smooth,samples=100,domain=3.6:4.0] plot(\x,{-0.7});
	\draw[line width=1.2pt,smooth,samples=100,domain=4.0:4.5] plot(\x,{-0.7*sin((4.5-(\x))*180/(1.0)});

	\begin{scriptsize}
	
	\draw[color=uuuuuu] (2.2,-0.25) node {$\phi$};
	
	\draw [fill=uuuuuu] (0.0,-0.7) circle (0.4pt);
	\draw[color=uuuuuu] (-0.1,-0.7) node {$z^-$};
	\draw [fill=uuuuuu] (0.0,0.4) circle (0.4pt);
	\draw[color=uuuuuu] (-0.1,0.4) node {$z^+$};

	\draw[color=uuuuuu] (0.15,0.05) node {$x(\lambda)$};
	\draw[color=uuuuuu] (0.85,0.05) node {$x(\lambda)$};
	\draw[color=uuuuuu] (0.5,0.05) node {$a_1$};
				\draw[color=uuuuuu] (1.25,0.05) node {$y(\lambda)$};
	\draw[color=uuuuuu] (1.95,0.05) node {$y(\lambda)$};
	\draw[color=uuuuuu] (1.6,0.05) node {$a_2$};
		\draw[color=uuuuuu] (2.35,0.05) node {$x(\lambda)$};
	\draw[color=uuuuuu] (2.95,0.05) node {$x(\lambda)$};
	\draw[color=uuuuuu] (2.65,0.05) node {$a_3$};
		\draw[color=uuuuuu] (3.80,0.05) node {$a_4$};
	\draw[color=uuuuuu] (3.35,0.05) node {$y(\lambda)$};
	\draw[color=uuuuuu] (4.25,0.05) node {$y(\lambda)$};

	\draw [fill=uuuuuu] (0.0,0.0) circle (1.0pt);
	\draw [fill=uuuuuu] (1.0,0.0) circle (1.0pt);
	\draw [fill=uuuuuu] (2.2,0.0) circle (1.0pt);
	\draw [fill=uuuuuu] (3.1,0.0) circle (1.0pt);
	\draw [fill=uuuuuu] (4.5,0.0) circle (1.0pt);
	\draw[color=uuuuuu] (4.56,-0.06) node {$1$};
	\draw[color=uuuuuu] (-0.06,-0.06) node {$0$};
	
	\end{scriptsize}
	\end{tikzpicture}
	\caption{Case 3, $A(z^+)=A(z^-)$: $\phi\in S^+_4$} \label{fig23}	
\end{figure}

\bigskip

 Now, if $q<p$ and  $0<\lambda$, we have 
$$
\frac{1}{i}\in\left(0,2(\theta(r^*(\lambda))+\alpha(r^*(\lambda)))\right]= 2(\theta+\alpha)((0,r^*(\lambda)]),
$$
for $i=j,j+1\cdots,\infty$ for some $j$ that satisfies $2j(\theta(r^*(\lambda))+\alpha(r^*(\lambda)))\geqslant 1>2(j-1)(\theta(r^*(\lambda))+\alpha(r^*(\lambda)))$, then for $i=j,j+1, \cdots$  there is exactly one $\phi\in S^\pm_{2i}$. On the other hand, it is $2i(\theta(r^*(\lambda))+\alpha(r^*(\lambda)))<1$ for $i=1,\cdots j-1$, we proceed  as in the case $q=p>2$. It remains $q>p$. If $\frac{p-1}{p}\left(2kI_e\right)^p\leqslant\lambda\leqslant \frac{p-1}{p}\left((2(k+1))I_e\right)^p$, we have 
$$
\frac{1}{i}\in\left[ 2\left(\frac{p-1}{\lambda p}\right)^\frac{1}{p}I(a_*),+\infty\right)= 2(\theta+\alpha)((0,r_e]),
$$
for $j=1,\cdots,k$, so there is at least one $r_j\in(0,r_e]$ such that $2j(\theta(r_j)+\alpha(r_j))=1$, and associated to those one $\phi\in^\pm_{2j}$ solution of \eqref{equilibria} with dead core.
And if $\frac{p-1}{p}\left(2kI_e\right)^p<\lambda\leqslant \frac{p-1}{p}\left((2(k+1))I_e\right)^p$
$$
\frac{1}{i}\in\left( 2\left(\frac{p-1}{\lambda p}\right)^\frac{1}{p}I(a_*),2(\theta(r^*(\lambda))+\alpha(r^*(\lambda)))\right)= 2(\theta+\alpha)((r_e,r^*(\lambda)]),
$$
for $i=j,\cdots,k$ for some $j$ that satisfies $2j(\theta(r^*(\lambda))+\alpha(r^*(\lambda)))\geqslant 1>2(j-1)(\theta(r^*(\lambda))+\alpha(r^*(\lambda)))$, then for $i=j,\cdots,k$ there are at least  one $\phi,\psi\in S^\pm_{2i}$  solutions of \eqref{equilibria} with dead core.  On the other hands, it is $2i(\theta(r^*(\lambda))+\alpha(r^*(\lambda)))<1$ for $i=1,\cdots, j-1$, we construct the solutions with flat core as in the case $q=p>2$.

\section{Solutions in $S^\pm_{2j-1}$}\label{S-odd}

This section is dedicated to study the existence of solutions of \eqref{equilibria} that live in $S^+_{2j-1}$ for some $j=2,3,\cdots$ (for solutions that live in $S^-_{2j-1}$ is done similarly by changing the roles of  $\theta$ and $\alpha$). In fact, we need to study that $\frac{1}{j}$ belong to the image of  function $\frac{2\theta+2\alpha}{1+2\alpha}:(0,r^*(\lambda))\rightarrow \R^+$, where $r^*(\lambda)=\min\{r(\lambda),r^-(\lambda)\}$.  

\bigskip

From \eqref{limitstheta} and \eqref{limitsalpha}
\begin{equation*}
	\begin{cases}
	2-\frac{2}{\left(\frac{\lambda_1}{\lambda}\right)^\frac{1}{p}+1}\stackrel{r\to 0^+}{\longleftarrow}\frac{2\theta(r)+2\alpha(r)}{1+2\alpha(r)}\stackrel{r\to r^*(\lambda)^-}\longrightarrow \frac{2\theta(r^*(\lambda))+2\alpha(r^*(\lambda))}{1+2\alpha(r^*(\lambda))},&\text{if $q=p$,}\\0\stackrel{r\to 0^+}{\longleftarrow}\frac{2\theta(r)+2\alpha(r)}{1+2\alpha(r)}\stackrel{r\to r^*(\lambda)^-}\longrightarrow \frac{2\theta(r^*(\lambda))+2\alpha(r^*(\lambda))}{1+2\alpha(r^*(\lambda))},&\text{if $q<p$,}\\2\stackrel{r\to 0^+}{\longleftarrow}\frac{2\theta(r)+2\alpha(r)}{1+2\alpha(r)}\stackrel{r\to r^*(\lambda)^-}\longrightarrow \frac{2\theta(r^*(\lambda))+2\alpha(r^*(\lambda))}{1+2\alpha(r^*(\lambda))},&\text{if $q>p$.}
	\end{cases}
\end{equation*}

\bigskip

First, we suppose  $1<p\leqslant 2$, we have
\begin{equation*}
\begin{cases}
\left(2-\frac{2}{\left(\frac{\lambda_1}{\lambda}\right)^\frac{1}{p}+1},1\right)\subseteq (\frac{2\theta+2\alpha}{1+2\alpha})(0,r^*(\lambda)),& \text{if $q=p$,}\\
\left(0,1\right)\subseteq (\frac{2\theta+2\alpha}{1+2\alpha})(0,r^*(\lambda)),& \text{if $q<p$,}\\
\end{cases}
\end{equation*} 
since $\frac{2\theta(r^*(\lambda))+2\alpha(r^*(\lambda))}{1+2\alpha(r^*(\lambda))}\geqslant 1$. If $q=p$ and  $\lambda_{2k-1}=(2k-1)^p\lambda_1 <\lambda \leqslant\lambda_{2k+1}=(2k+1)^p\lambda_1 $ for some $k=2,3,\cdots$, we have $\frac{1}{k}>2-\frac{2}{\left(\frac{\lambda_1}{\lambda}\right)^\frac{1}{p}+1}> \frac{1}{k+1}$. Thus for $j=2,\cdots,k$  there is  at least one $\phi\in S^+_{2j-1}$ solution of \eqref{equilibria}. If $q<p$ and $0<\lambda$, we have, for $j=2,3,\cdots,$  there is at least one  $\phi\in S^+_{2j-1}$ solution of \eqref{equilibria}. 

\bigskip

Now, we suppose $2<p<+\infty$. If $q=p$ and  $\lambda_{2k-1}=(2k-1)^p\lambda_1 <\lambda \leqslant\lambda_{2k+1}=(2k+1)^p\lambda_1 $ for some $k=2,3,\cdots$, we have for $i=j,\cdots,k$ for some $j$ that satisfies $2j\theta(r^*(\lambda))+2(j-1)\alpha(r^*(\lambda)))\geqslant 1>2(j-1)\theta(r^*(\lambda))+2(j-2)\alpha(r^*(\lambda)))$ one $\phi\in S^+_{2i-1}$ solution of \eqref{equilibria} with dead core, on the other hand, it is for $i=1,\cdots j-1$, we have one $\phi\in S^+_{2i-1}$ solution of \eqref{equilibria} with flat core. If $q<p$ and  $0<\lambda$,  for $i=j,j+1\cdots,$ for some $j$ that satisfies $2j\theta(r^*(\lambda))+2(j-1)\alpha(r^*(\lambda)))\geqslant 1>2(j-1)\theta(r^*(\lambda))+2(j-2)\alpha(r^*(\lambda)))$ we have one $\phi\in S^+_{2i-1}$ solution of \eqref{equilibria} with dead core, on the other hand, it is for $i=1,\cdots j-1$, we have one $\phi\in S^+_{2i-1}$ solution of \eqref{equilibria} with flat core.

\bigskip

It remains to consider $q>p$, in this case we have
\begin{equation*}
	\begin{cases}
		\frac{2\theta+2\alpha}{1+2\alpha}((0,r^+_o])=\left[\frac{1}{\left(\frac{\lambda p}{p-1}\right)^{1/p}\frac{1}{2(I_o^++J^+_o)}+\frac{J^+_o}{I^+_o+J^+_o}},2\right),&\text{if $q>p$}\\
		\frac{2\theta+2\alpha}{1+2\alpha}((r^+_o,r^*(\lambda)])=\left(\frac{1}{\left(\frac{\lambda p}{p-1}\right)^{1/p}\frac{1}{2(I_o^++J^+_o)}+\frac{J^+_o}{I^+_o+J^+_o}},\frac{2\theta(r^*(\lambda))+2\alpha(r^*(\lambda))}{1+2\alpha(r^*(\lambda))}\right],&\text{if $q>p$}
	\end{cases}
\end{equation*}
 where 
\begin{equation} \label{IJpositiveodd}
\frac{1}{\left(\frac{\lambda p}{p-1}\right)^{1/p}\frac{1}{2(I_o^++J^+_o)}+\frac{J^+_o}{I^+_o+J^+_o}}:=\displaystyle\min_{r\in(0,r^*(\lambda))}\left\{\frac{2\theta(r)+2\alpha(r)}{1+2\alpha(r)}\right\}
\end{equation}
with $I^+_o=I(z(r^+_0))$ and $J^+_o=J(S(r^+_o)))$ for some $r^+_o\in(0,r^*(\lambda)).$

\bigskip 

Thus for $1<p\leq2$, we have $\frac{2\theta(r^*(\lambda))+2\alpha(r^*(\lambda))}{1+2\alpha(r^*(\lambda))}\geqslant1$
that can be infinity in any cases, in otherwise, it is $2<p$,  always $\frac{2\theta(r^*(\lambda))+2\alpha(r^*(\lambda))}{1+2\alpha(r^*(\lambda))}$ is finite number. Then, if $\lambda^+_{*,2k-1}=\frac{p-1}{p}\left((k-\frac{J^+_o}{I^+_o+J^+_o})2(I^+_o+J^+_o)\right)^p<\lambda< \frac{p-1}{p}\left(((k+1)-\frac{J^+_o}{I^+_o+J^+_o})2(I^+_o+J^+_o)\right)^p=\lambda^+_{*,2(k+1)-1}$ for some $k=2,\cdots$, we have
$$
\displaystyle\frac{1}{k}> \frac{1}{\left(\frac{\lambda p}{p-1}\right)^{1/p}\frac{1}{2(I_o^++J^+_o)}+\frac{J^+_o}{I^+_o+J^+_o}}>\frac{1}{k+1}
$$
and
$$
\frac{1}{j}\in\left[ \frac{1}{\left(\frac{\lambda p}{p-1}\right)^{1/p}\frac{1}{2(I_o^++J^+_o)}+\frac{J^+_o}{I^+_o+J^+_o}},2\right)\subseteq\frac{2\theta+2\alpha}{1+2\alpha}((0,r^*(\lambda)))
$$
for $j=2,\cdots,k$, that implies there are at least $r_j,r_j^*\in (0,r^*(\lambda))$ such that $2j\theta(r_j)+2(j-1)\alpha(r_j))=2j\theta(r^*_j)+2(j-1)\alpha(r^*_j)=1$ and, associated to those,  two $\phi,\psi\in S_{2j-1}^+$  solution of \eqref{equilibria}. And if $\frac{p-1}{p}\left((k-\frac{J^+_o}{I^+_o+J^+_o})2(I^+_o+J^+_o)\right)^p=\lambda$ we have  $2k\theta(r^+_o)+2(k-1)\alpha(r^+_o))=1$ and, associated to that,  one $\phi\in S_{2k-1}^+$  solution of \eqref{equilibria}. On the other hand, the case $2<p$,  we proceed as above.

\section{Regularity}\label{S-reg} 

In this Section, we study the regularity properties of the  solutions of the following  nonlinear eigenvalue problem
\begin{equation}\label{equilibria-regularity}
	\begin{cases}
		-\left(|\psi_x|^{p-2}\psi_x\right)_x=h(\psi), &\text{in $(0,1)$},\\ 
		\psi(0)=\psi(1)=0&
	\end{cases}	
\end{equation}
where $h$ is a continuous function that satisfies the conditions for existence of solution of \eqref{equilibria-regularity}.  We state a regularity result for the solutions of \eqref{equilibria-regularity}. This result, extend the regularity result of \cite{otani}
\begin{theorem}\label{regularityelliptic}
	Let $\psi$ a solution of \eqref{equilibria-regularity} and $h$ continuous function. We have	
	\begin{equation*}
		\begin{cases}
			\psi\in C^{2}([0,1]), &\text{If $1<p\leqslant2$}\\
			\psi\in C^{1,\frac{1}{p-1}}([0,1])\cap C^2\left([0,1]\setminus(\mathcal{C}\setminus\mathcal{Z})\right),&\text{If $2<p<2(n+1)$}\\
			\psi\in C^{1,\frac{1}{p-1}}([0,1])\cap C^2\left([0,1]\setminus\mathcal{C}\right),&\text{If $2(n+1)\leqslant p$}
					\end{cases}
	\end{equation*}
	where  $\mathcal{Z}=\{\chi\in[0,1]; \text{$\psi(\chi)$ is zero of order $n$ of $h$}\}$ and $\mathcal{C}=\{\chi\in[0,1];\psi_x(\chi)=0\}$. Moreover 
	\begin{equation*}
		\begin{cases}
			\displaystyle\lim_{x\to\chi}\frac{\psi_x(x)}{|x-\chi|^{\frac{1}{p-1}}}=\left(\frac{1}{p-1}	\right)^{\frac{1}{p-1}}\left|h(\psi(\chi))\right|^{\frac{1}{p-1}},		&\text{if $\chi\in\mathcal{C}$}\\
			\displaystyle\lim_{x\to\chi}\frac{\psi_x(x)}{\left| x-\chi\right|^{\theta}}=0.&\text{for all  $\frac{1}{p-1}\leqslant\theta\leqslant1$ if $\chi\in\mathcal{C}\cap\mathcal{Z}$.}
		\end{cases}
	\end{equation*}
\end{theorem}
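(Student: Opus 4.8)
The plan is to reduce the equation to a first-order relation for $w:=|\psi_x|^{p-2}\psi_x$ and to read off all the regularity from the inverse of the map $s\mapsto|s|^{p-2}s$ together with the energy identity \eqref{energy3}. Since $\psi\in W^{1,p}_0(0,1)\hookrightarrow C([0,1])$, the function $h(\psi)$ is continuous, and \eqref{equilibria-regularity} says exactly that $w_x=-h(\psi)$ in the sense of distributions; hence $w\in C^1([0,1])$. Writing $g(w)=|w|^{\frac{1}{p-1}-1}w$ for the inverse of $s\mapsto|s|^{p-2}s$, we have $\psi_x=g(w)$, so all the regularity of $\psi_x$ beyond $C^0$ is governed by the regularity of $g$ at the points where $w=0$, i.e. on $\mathcal{C}=\{\psi_x=0\}$.

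First I would dispose of the easy range. For $1<p\leqslant2$ one has $\frac{1}{p-1}\geqslant1$, so $g\in C^1(\R)$; since $w\in C^1$ this gives $\psi_x\in C^1$ and $\psi\in C^2([0,1])$. For $p>2$ the map $g$ is Lipschitz away from $0$ and only $\frac{1}{p-1}$-Hölder at $0$; composing the Hölder bound for $g$ with the Lipschitz bound for $w$ yields $\psi_x\in C^{0,\frac{1}{p-1}}([0,1])$, i.e. $\psi\in C^{1,\frac{1}{p-1}}([0,1])$, which recovers and slightly sharpens the result of \cite{otani}. Moreover on $[0,1]\setminus\mathcal{C}$ we have $w\neq0$, where $g$ is smooth, so $\psi\in C^2([0,1]\setminus\mathcal{C})$ with no further hypothesis. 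Everything non-trivial therefore happens at a point $\chi\in\mathcal{C}$, and the whole question is whether $\psi_x$ is differentiable there.

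The core of the argument is a local analysis at such a $\chi$ via the time-map. Setting $v=\psi(\chi)$, the energy identity \eqref{energy3} evaluated with $a=\chi$ gives $|\psi_x|^p=\frac{p}{p-1}\bigl(H(v)-H(\psi)\bigr)$, where $H(s)=\int_0^s h$. On a one-sided neighbourhood on which $\psi$ is monotone this inverts to
\begin{equation*}
|x-\chi|=\int\Bigl(\tfrac{p}{p-1}\bigl(H(v)-H(t)\bigr)\Bigr)^{-\frac1p}dt,
\end{equation*}
the integral being taken between $\psi(x)$ and $v$. If $h(v)\neq0$ (that is $\chi\in\mathcal{C}\setminus\mathcal{Z}$) then $H(v)-H(t)\sim|h(v)|\,|t-v|$, the integrand is of order $|t-v|^{-1/p}$, and inverting gives $|\psi(x)-v|\sim c\,|x-\chi|^{p/(p-1)}$ and hence $\psi_x\sim c\,|x-\chi|^{1/(p-1)}$; since $\frac{1}{p-1}<1$ the second derivative blows up and $\psi\notin C^2$ at $\chi$, while a direct identification of the integration constant yields the first displayed limit. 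If instead $v$ is a zero of order $n$ of $h$ ($\chi\in\mathcal{C}\cap\mathcal{Z}$) then $H(v)-H(t)\sim c\,|t-v|^{n+1}$ and the integrand is of order $|t-v|^{-(n+1)/p}$. Here the dichotomy is decided by comparing $(n+1)/p$ with $1$: when $p\leqslant n+1$ the integral diverges, so $v$ cannot be reached in finite length and $\psi\equiv v$ on a neighbourhood of $\chi$ (a flat/dead core), which is trivially $C^2$; when $p>n+1$ the integral converges and inverting gives $\psi_x\sim c\,|x-\chi|^{(n+1)/(p-n-1)}$.

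Finally I would read off the threshold. The exponent $\gamma=\frac{n+1}{p-n-1}$ satisfies $\gamma>1$ exactly when $p<2(n+1)$; in that range (together with the flat-core subcase) $\psi_x(x)=o(|x-\chi|)$, so $\psi_x$ is differentiable at $\chi$ with $\psi_{xx}(\chi)=0$, giving $\psi\in C^2$ at the points of $\mathcal{C}\cap\mathcal{Z}$ and the second displayed limit $\psi_x(x)/|x-\chi|^{\theta}\to0$ for $\frac{1}{p-1}\leqslant\theta\leqslant1$; when $p\geqslant2(n+1)$ one has $\gamma\leqslant1$ and $\psi\notin C^2$ at those points. Combining this with the generic critical points of $\mathcal{C}\setminus\mathcal{Z}$ gives precisely $\psi\in C^{1,\frac{1}{p-1}}([0,1])\cap C^2\bigl([0,1]\setminus(\mathcal{C}\setminus\mathcal{Z})\bigr)$ for $2<p<2(n+1)$ and $\psi\in C^{1,\frac{1}{p-1}}([0,1])\cap C^2\bigl([0,1]\setminus\mathcal{C}\bigr)$ for $2(n+1)\leqslant p$. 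I expect the main obstacle to be the rigorous control of the error terms in the time-map asymptotics using only continuity of $h$ (respectively order-$n$ vanishing), and in particular making the flat-core dichotomy at the Osgood threshold $(n+1)/p=1$, and the borderline $p=2(n+1)$, fully rigorous rather than merely heuristic.
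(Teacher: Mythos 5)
Your proposal rests on the same two pillars as the paper's proof---the energy identity $|\psi_x(x)|^p=\frac{p}{p-1}\left(H(\psi(\chi))-H(\psi(x))\right)$ and a local analysis at each $\chi\in\mathcal{C}$---but you execute the key steps by genuinely different means. The paper never introduces $w=|\psi_x|^{p-2}\psi_x$: it gets $C^2$ for $1<p\leqslant 2$ and $C^2$ off $\mathcal{C}$ from the explicit formula \eqref{secondderivate}, obtains the H\"older statement from a pointwise L'H\^opital evaluation of $\lim_{x\to\chi}(\psi_x(x))^{p-1}/(x-\chi)$, and, at $\chi\in\mathcal{C}\cap\mathcal{Z}$, does not invert the time map at all: it factorizes $H(\psi(\chi))-H(t)=K(t)(\psi(\chi)-t)^{n+1}$ and computes $\lim_{x\to\chi}\psi_{xx}(x)$ directly from \eqref{secondderivate}, finding $0$, a nonzero constant, or $+\infty$ according as $2<p<2(n+1)$, $p=2(n+1)$, $p>2(n+1)$, and then concludes $\psi_{xx}(\chi)=0$ by the mean value theorem. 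Your route through $w\in C^1$ composed with the $\frac{1}{p-1}$-H\"older inverse is cleaner and in fact delivers the uniform $C^{1,\frac{1}{p-1}}$ bound that the paper's pointwise limits do not quite give; your Osgood/flat-core discussion of $p\leqslant n+1$ covers a degenerate case the paper passes over silently; and your exponent $\frac{n+1}{p-n-1}$ correctly delimits the second displayed limit (at a flat-core boundary point with $p\geqslant 2(n+1)$ the ratio with $\theta=1$ does not tend to $0$, so that display is really a statement about the range $2<p<2(n+1)$).

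Two concrete gaps remain. First, at $\chi\in\mathcal{C}\cap\mathcal{Z}$ with $2<p<2(n+1)$ you only establish that $\psi_x$ is differentiable at $\chi$ with $\psi_{xx}(\chi)=0$; membership in $C^2$ near $\chi$ also requires $\psi_{xx}(x)\to 0$ as $x\to\chi$, and for that you must feed your asymptotics back into \eqref{secondderivate}: $h(\psi(x))\,|\psi_x(x)|^{2-p}\sim c\,|x-\chi|^{\frac{2(n+1)-p}{p-n-1}}$, whose exponent is positive precisely when $p<2(n+1)$---this is exactly the computation the paper performs and your write-up omits. Second, your claim that identifying the integration constant ``yields the first displayed limit'' cannot be literally right: carried out correctly (most quickly, $w(x)=-\int_\chi^x h(\psi)\,dy=-h(\psi(\chi))(x-\chi)+o(|x-\chi|)$ and $|\psi_x|=|w|^{\frac{1}{p-1}}$), your method gives $\lim_{x\to\chi}|\psi_x(x)|/|x-\chi|^{\frac{1}{p-1}}=|h(\psi(\chi))|^{\frac{1}{p-1}}$, \emph{without} the factor $\left(\frac{1}{p-1}\right)^{\frac{1}{p-1}}$ appearing in the statement. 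The test case $h\equiv c>0$, where $|\psi_x|^{p-2}\psi_x=c(\chi-x)$ exactly, confirms the constant $c^{\frac{1}{p-1}}$; in the paper's L'H\^opital step the factors $\frac{p-1}{p}$ and $\frac{p}{p-1}$ cancel, so the value $-\frac{1}{p-1}h(\psi(\chi))$ recorded there carries a spurious $\frac{1}{p-1}$. Your approach, done carefully, would thus \emph{correct} the stated constant rather than reproduce it, and you should say so rather than assert agreement. A minor final point: your step $H(\psi(\chi))-H(t)\sim c\,|t-\psi(\chi)|^{n+1}$ needs ``zero of order $n$'' to mean $h(t)=(t-\psi(\chi))^n\tilde h(t)$ with $\tilde h$ continuous and nonvanishing at $\psi(\chi)$ (the paper quietly assumes even more, namely $K\in C^1$); bare continuity of $h$ does not give it.
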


\begin{proof}
The proof will be done for the positive solution of \eqref{equilibria-regularity}, although the proof is valid for  other solutions. Let $\psi$ the positive  solution  of the elliptic problem \eqref{equilibria-regularity}. First,  we have $\psi\in C^1([0,1])$, since $h(\psi)\in C([0,1])$. Remain, show more regularity. In fact, we multiplying by $x\in\mathcal{C}$ to the equation \eqref{equilibria-regularity} and integrating from to $0$ to $0<x$  we have 
$$
|\psi_x(x)|^p=|\psi_x(0)|^p-\frac{p}{p-1}H(\psi(x))
$$
where $H(\mu)=\displaystyle\int^\mu_0h(\tau)d\tau$ that implies that 
$$
\psi_x(x)=\left[|\psi_x(0)|^p-\frac{p}{p-1}H(\psi(x))\right]^{1/p}
$$
for $0<x\leqslant \chi$.

\bigskip

So differentiating
\begin{equation}\label{secondderivate}
			\psi_{xx}(x)=-\frac{1}{p-1}h(\psi(x))|\psi_x(x)|^{2-p}.
\end{equation}
for all $x\not\in\mathcal{C}$. Thus, $\psi\in C^2([0,1])$ since $1<p\leqslant2$ and $\psi\in C^2([0,1]\setminus \mathcal{C})$  for otherwise $2<p$. Now, let $\chi\in \mathcal{C}$, then we have
\begin{equation}\label{derivatezero}
\psi_x(x)=\left[\frac{p}{p-1}H(\psi(\chi))-\frac{p}{p-1}H(\psi(x))\right]^{1/p}
\end{equation}
for $0<x\leqslant \chi$ and
$$
x=\int^x_0\left[\frac{p}{p-1}H(\psi(\chi))-\frac{ p}{p-1}H(\psi(y))\right]^{-1/p}\psi_y(y)dy
$$
since $\psi_x(\chi)=0$, $\psi_x(x),\psi(x)>0$ for $0<x<\chi$. Thus, by L'hospital 
\begin{equation*}
	\begin{split}
		\displaystyle\lim_{x\to\chi} \frac{(\psi_x(x))^{p-1}}{x-\chi}=&\lim_{x\to\chi} \frac{\left[\frac{ p}{p-1}H(\psi(\chi))-\frac{ p}{p-1}H(\psi(x))\right]^{\frac{p-1}{p}}}{\int^x_{\chi}\left[\frac{p}{p-1}H(\psi(\chi))-\frac{ p}{p-1}H(\psi(y))\right]^{-1/p}\psi_y(y)dy}\\
		=&\lim_{x\to\chi}-\frac{1}{p-1}h(\psi(x))\\
		=&-\frac{1}{p-1}h(\psi(\chi)),
	\end{split}
\end{equation*}
for all $\chi\in \mathcal{C}.$ Thus $\psi\in C^{1,\frac{1}{p-1}}([0,1])$.
		
\bigskip
		
Finally, in some $\chi\in\mathcal{C}\cap\mathcal{Z}$ we have more regularity, exactly  $\psi_{xx}(\chi)=0 $. In fact, we noted that $h(\psi(\chi))=0=(|\psi_x|^{p-2}\psi_x)|_{x=\chi}$. Then from \eqref{secondderivate} and \eqref{derivatezero}
\begin{equation*}
\begin{split}
\lim_{x\to\chi}\psi_{xx}(x)=&\lim_{x\to\chi}\frac{-h(\psi(x))\left[\frac{ p}{p-1}H(\psi(\chi))-\frac{ p}{p-1}H(\psi(x))	\right]^{\frac{2}{p}}}{ p\left[H(\psi(\chi))-H(\psi(x))	\right]}
\end{split}
\end{equation*}

\bigskip

Now we need analyze the last limit. As $\chi\in \mathcal{Z}$, we can write $H(\psi(\chi))-H(t)=K(t)(\psi(\chi)-t)^{n+1}$ for some $K\in C^1([0,\psi(\chi)])$ such that $K(t)>0$ for all $t\in[0,\psi(\chi)]$. Thus 
\begin{equation*}
	\begin{split}
		\lim_{t\to \psi(\chi)}\frac{-h(t)}{\left(H(\psi(\chi))-H(t)\right)^{\frac{p-2}{p}}}=&\lim_{t\to \psi(\chi)}\frac{K'(t)(\psi(\chi)-t)^{n+1}-K(t)(n+1)(\psi(\chi)-t)^{n}}{\left(K(t)\right)^{\frac{p-2}{p}}(\psi(\chi)-t)^{\frac{(p-2)(n+1)}{p}}}\\
		=&\lim_{t\to \psi(\chi)}\frac{K'(t)(\psi(\chi)-t)-K(t)(n+1)}{\left(K(t)\right)^{\frac{p-2}{p}}}(\psi(\chi)-t)^{\frac{2(n+1)-p}{p}}\\
		=&\begin{cases}
			0,&2<p<2(n+1)\\
			-(n+1)(K(\psi(\chi)))^{\frac{1}{n+1}},&p=2(n+1)\\
			+\infty,& 2(n+1)<p.
		\end{cases}
	\end{split}
\end{equation*}
So, using the mean value theorem we conclude that $\psi_{xx}(\chi)=0$ for $\chi\in\mathcal{C}\cap\mathcal{Z}$ and $2<p<2(n+1)$. 
\end{proof}

\medskip

Received xxxx 20xx; revised xxxx 20xx.

\medskip


\begin{thebibliography}{99}
	
%	\bibitem{AL-CO} (MR1837101)
%	\newblock C. O. Alves and F. J. S. A. Corr\^{e}a,
%	\newblock {On existence of solutions for a class of problem involving a nonlinear operator},
%	\newblock \emph{Comm. Appl. Nonlinear Anal.}, \textbf{8} (2001), 43--56.
%	
%	\bibitem{Angenent-SS} (MR837763)
%	\newblock S. B. Angenent,
%	\newblock {The Morse-Smale property for a semi-linear parabolic equation},
%	\newblock \emph{J. Differential Equations} \textbf{ 62} (1986), 427--442.
%	
%	\bibitem{Angenent-LN} (MR953678)
%	\newblock S. B. Angenent,
%	\newblock {The zero set of a solution of a parabolic equation},
%	\newblock \emph{J. Reine Angew. Math.}, \textbf{390} (1988), 79--96.
%	
%	\bibitem{ACR-B} (MR1737541)
%	\newblock J. M. Arrieta, A. N. de Carvalho and A. Rodriguez-Bernal,
%	\newblock {Attractors of parabolic problems with nonlinear boundary condition. Uniform bounds},
%	\newblock \emph{Comm. Partial Differential Equations}, \textbf{25} (2000), 1--37.
%	
%	\bibitem {B} (MR0390843)
%	\newblock V. Barbu,
%	\newblock \emph{Nonlinear semigroups and differential equations in Banach spaces},
%	\newblock Editura Academiei Republicii Socialiste Rom\^{a}nia, Bucharest;
%	Noordhoff International Publishing, Leiden, 1976. (Translated from the Romanian)
%	
%	\bibitem{BCL-Raugel}
%	\newblock M. C. Bortolan, A. N. Carvalho, J. A. Langa and G. Raugel
%	\newblock Non-autonomous perturbations of Morse-Smale semigroups: stability of the phase diagram.
%	\newblock preprint.
%	
%	
\bibitem{Bcv}(MR4030598)
\newblock R. C. D. S. Broche, A.N. Carvalho  and  J. Valero.
\newblock A non-autonomous scalar one-dimensional dissipative parabolic problem: the description of the dynamics,
\newblock \emph{Nonlinearity},  \textbf{32} (2019), 4912--4941.
\newblock DOI: 10.1088/1361-6544/ab3f55
%	
%	\bibitem{CA-GE} (MR1836781)
%	\newblock A. N. Carvalho and  C. B. Gentile,
%	\newblock {Comparison results for nonlinear parabolic equations with monotone principal part},
%	\newblock \emph{J. Math. Anal. Appl.}, \textbf{259} (2001),  319-337 .
%	
%	
\bibitem{Carvalho et al.} (MR2976449)
\newblock A.N. Carvalho, J. A. Langa and J. C. Robinson,
\newblock \emph{Attractors for infinite-dimensional non-autonomous dynamical systems},
\newblock Springer: New York, 2013.
\newblock DOI: 10.1088/1361-6544/ab3f55
%	
%	
\bibitem{CLR-PAMS} (MR2898698)
\newblock A. N. Carvalho, J.A. Langa and J. C. Robinson,
\newblock {Structure and bifurcation of pullback attractors in a non-autonomous Chafee-Infante equation},
\newblock \emph{Proc. Amer. Math. Soc.}, \textbf{140} (2012), 2357--2373.
\newblock DOI:10.1090/S0002-9939-2011-11071-2
%	
%	
	\bibitem{CH-IN} (MR440205)
	\newblock N. Chafee and E. F. Infante,
	\newblock A bifurcation problem for a nonlinear partial differential equation of parabolic type,
	\newblock \emph{Applicable Anal.}, \textbf{4}  (1974), {17--37}.
	\newblock DOI: 10.1080/00036817408839081
	
	\bibitem{Chaf-Inf} (MR328359)
	\newblock N. Chafee and E. F. Infante,
	\newblock Bifurcation and stability for a nonlinear parabolic partial differential equation,
	\newblock \emph{Bull. Amer. Math. Soc.}, \textbf{80} (1974), 49--52.
	\newblock DOI: 10.1090/S0002-9904-1974-13349-5
%	
%	\bibitem {CH-MA} (MR986159)
%	\newblock X. Y. Chen and H. Matano,
%	\newblock Convergence, asymptotic periodicity and finite-point blow-up in one dimensional semilinear heat equations,
%	\newblock \emph{J. Differential Equations}, \textbf{78} (1989), 160--190.
%	
%	\bibitem {CH-VA-VC} (MR2033009)
%	\newblock M. Chipot, V. Valente and G. Vergara Caffarelli,
%	\newblock {Remarks on a nonlocal problem involving the {D}irichlet energy},
%	\newblock \emph{Rend. Sem. Mat. Univ. Padova}, \textbf{110} (2003), 199--220.
%	
%	
%	\bibitem{FR} (MR1376067)
%	\newblock B. Fiedler and C. Rocha,
%	\newblock Heteroclinic orbits of semilinear parabolic equations,
%	\newblock \emph{J. Differential Equations}, \textbf{125} (1996), 239--281.
%	\textbf{}

\bibitem{Ha}(MR0941371)
\newblock J. K. Hale,
\newblock Asymptotic Behavior of Dissipative Systems,
\newblock \emph{Providence: Math. Surveys and Monographs, American Mathematical Society}, (1998).

\bibitem{HE} (MR610244)
\newblock D. Henry,
\newblock \emph{Geometric theory of semilinear parabolic equations},
\newblock  Springer, Berlin, 1981.
%	
%	
	\bibitem{Guedda-Veron} (MR0965762)
	\newblock M. Guedda and L. Veron,
	\newblock Bifurcation phenomena associated to the p-Laplace operator,
	\newblock \emph{Trans. Amer. Math. Soc.}, \textbf{310} (1988), 419--431.
	\newblock DOI: 10.2307/2001132
	
\bibitem{LICALUMO}(MR4160009)
\newblock Y. Li, A. N. Carvalho, T. L. M. Luna  and  E. M. Moreira,
\newblock A non-autonomous bifurcation problem for a non-local scalar one-dimensional parabolic equation,
\newblock \emph{Commun. Pure Appl. Anal},  \textbf{19} (2020) no. 11, 5181--5196.
\newblock DOI: 10.3934/cpaa.2020232

	\bibitem{otani}(MR0764911)
	\newblock M. Ôtani,
	\newblock On certain second order ordinary differential equations associated with Sobolev-Poincare-type inequalities,
	\newblock \emph{Nonlinear Analysis},  \textbf{8} (1984), 1255--1270.
	\newblock DOI: 10.1016/0362-546X(84)90014-2
	
	
	\bibitem{TAYA} (MR1769251)
	\newblock S. Takeuchi, Y. Yamada,
	\newblock  Asymptotic properties of a reaction-diffusion equation with degenerate p-Laplacian,
	\newblock \emph{Nonlinear Analysis}, \textbf{42} (2000), 41--61
	\newblock DOI: 10.1016/S0362-546X(98)00329-0
	
	
	
	
\end{thebibliography}
	\end{document}